\newtheorem{theorem}{Theorem}[section]
\newtheorem{example}[theorem]{Example}
\newtheorem{remark}[theorem]{Remark}
\newtheorem{proposition}[theorem]{Proposition}
\newtheorem{lemma}[theorem]{Lemma}
\crefname{theorem}{Theorem}{Theorems}
\crefname{example}{Example}{Examples}
\crefname{observation}{Observation}{Observations}
\crefname{remark}{Remark}{Remarks}
\crefname{proposition}{Proposition}{Propositions}
\crefname{lemma}{Lemma}{Lemmas}
\crefname{corollary}{Corollary}{Corollaries}
\crefname{algocf}{Algorithm}{Algorithms}	
\crefname{table}{Table}{Tables}	
\crefname{figure}{Figure}{Figures}
\crefname{algorithm}{Algorithm}{Algorithms}
\crefname{section}{Section}{Sections}
\crefname{algorithm}{Algorithm}{Algorithms}
\def\blfootnote{\xdef\@thefnmark{}\@footnotetext}
\title{An efficient branch-and-cut algorithm for the multiple probabilistic covering location problem\blfootnote{
		\textit{Email addresses:} 
        \texttt{wangyanru@bit.edu.cn} (Yan-Ru Wang),
		\texttt{chenweikun@bit.edu.cn} (Wei-Kun Chen), 
		\texttt{ivana.ljubic@essec.edu} (Ivana Ljubi\'{c})}
}
\author[a]{Yan-Ru Wang\,\orcidlink{0009-0009-6256-2328}}
\author[a]{Wei-Kun Chen\,\orcidlink{0000-0003-4147-1346}}
\author[b]{Ivana Ljubi\'{c}\,\orcidlink{0000-0003-4147-1346}}
\affil[a]{\small School of Mathematics and Statistics, Beijing Institute of Technology, 100081, Beijing, China}
\affil[b]{\small Department of Information Systems, Data Analytics and Operations, ESSEC Business School, 95021 Cergy-Pontoise, France}
\date{}
\newif\ifarxiv
\newcommand{\be}{\text{$\boldsymbol{e}$}\xspace}
\DeclareMathOperator{\argmax}{\text{argmax}\xspace}
\DeclareMathOperator{\argmin}{\text{argmin}\xspace}
\newif\iflastwasMath
\newif\ifarxiv
\newcommand{\myqedhere}{\qedhere}
\newcommand{\mybeginproof}{\begin{proof}}
\newcommand{\myendqed}{}
\newenvironment{myproof}{%
	\lastwasMathfalse
	\mybeginproof
}{%
	\iflastwasMath
	\myqedhere
	\else
	\myendqed
	\fi
\end{proof}
}
\newcommand{\solver}[1]{\textsc{#1}\xspace}
\newcommand{\gurobi}{\solver{Gurobi}}
\newcommand{\cplex}{\solver{Cplex}}
\newcommand{\xpress}{\solver{Xpress}}
\newcommand{\scip}{\solver{Scip}}
\newcommand{\MINLP}{{MINLP}\xspace}
\newcommand{\MILP}{{MILP}\xspace}
\newcommand{\MIP}{{MIP}\xspace}
\newcommand{\LP}{{LP}\xspace}
\newcommand{\BnC}{B\&C\xspace}
\newcommand{\MPCLP}{\text{MPCLP}\xspace}
\newcommand{\MCLP}{\text{MCLP}\xspace}
\newcommand{\I}{\mathcal{I}}
\newcommand{\CL}{\mathcal{L}}
\newcommand{\J}{\mathcal{J}}
\newcommand{\W}{\mathcal{W}}
\newcommand{\X}{\mathcal{X}}
\newcommand{\Y}{\mathcal{Y}}
\newcommand{\CS}{\mathcal{S}}
\newcommand{\CO}{\mathcal{O}}
\newcommand{\C}{\mathcal{C}}
\newcommand{\Z}{\mathbb{Z}}
\newcommand{\R}{\mathbb{R}}
\newcommand{\id}{\texttt{id}}
\newcommand{\idfull}{\texttt{id-$r$-$R$-$\theta$}\xspace}
\newcommand{\V}{\text{$|\I|$}}
\newcommand{\K}{\text{$K$}}
\newcommand{\tbltime}{\texttt{T}\xspace}
\newcommand{\tblnode}{\texttt{N}\xspace}
\newcommand{\tblobj}{\texttt{OPT}\xspace}
\newcommand{\tblgap}{\texttt{G}(\texttt{\%})\xspace}
\newcommand{\tblrgap}{\texttt{LPG}(\texttt{\%})\xspace}
\newcommand{\tblnvar}{\texttt{V}\xspace}
\newcommand{\tblndata}{\texttt{\#}\xspace}
\newcommand{\tblnsol}{\texttt{S}\xspace}
\newcommand{\testBin}{\texttt{B\&C-B}\xspace}
\newcommand{\testInt}{\texttt{B\&C-I}\xspace}
\newcommand{\testBasicInt}{\texttt{vB\&C-I}\xspace}
\newcommand{\testBasicIntStr}{\texttt{vB\&C-I+E}\xspace}
\newcommand{\testBasicIntStrVI}{\texttt{vB\&C-I+E+L}\xspace}
\newcommand{\testBasicIntVI}{\texttt{vB\&C-I+L}\xspace}
\newcommand{\testLiterature}{\texttt{$\acute{\texttt{A}}$S19}\xspace}
\newcommand{\TL}{\texttt{TL}\xspace}
\DeclareMathOperator{\conv}{conv}
\newcommand{\Partial}{\text{P}}
\newcommand{\Full}{\text{F}}
\newcommand{\Phipartial}{\Phi_{\text{P}}}
\newcommand{\bzero}{\text{$\boldsymbol{0}$}\xspace}
\newcommand{\floor}[1]{\lfloor #1 \rfloor}
\newcommand{\rev}[1]{{#1}}
\newcommand{\U}{\mathcal{U}\xspace}
\def\BIBand{and}%
\newcommand{\smartcitet}[1]{%
	\@tempcnta=0\relax 
	\@for\@citea:=#1\do{\advance\@tempcnta by 1\relax}
	\@tempcntb=0\relax 
	\@for\@citea:=#1\do{%
		\advance\@tempcntb by 1\relax 
		\citet{\@citea}
		\ifnum\@tempcntb<\@tempcnta 
		\ifnum\@tempcntb=\numexpr\@tempcnta-1\relax 
		\ifnum\@tempcnta>2 
		,\ \BIBand\ 
		\else
		\ \BIBand\ 
		\fi
		\else
		,\ 
		\fi
		\fi
	}%
}
\newcommand{\keywords}[1]{\textbf{Keywords}: #1}
\begin{document}
	\maketitle
	
	\begin{abstract}
		In this paper, we consider the multiple probabilistic covering location problem (\MPCLP), which attempts to open a fixed number of facilities to maximize the total covered customer demand under a joint probabilistic coverage setting.
		We present a new mixed integer nonlinear programming (\MINLP) formulation, and develop an efficient linear programming (\LP) based branch-and-cut (\BnC) algorithm
		where submodular and outer-approximation inequalities are used to \rev{replace} the nonlinear constraints and {are} separated at the nodes of the search tree. 
		One key advantage of the proposed \BnC algorithm is that 
		the number of variables in the underlying formulation \rev{grows} only \rev{linearly} with the number of customers and facility locations and is one-order of magnitude smaller than that in the underlying formulation of a state-of-the-art \BnC algorithm in the literature.
		Moreover, we propose two \rev{new} families of strong valid inequalities, called \rev{\emph{enhanced outer-approximation}} and \rev{\emph{lifted subadditive inequalities}}, to strengthen the \LP relaxation and speed up the convergence of the proposed \BnC algorithm. 
		\rev{In} extensive computational experiments on a testbed of $240$ benchmark \MPCLP instances, we show that, thanks to the small problem size and the strong \LP relaxation of the underlying formulation, 
		the proposed \BnC algorithm significantly outperforms a state-of-the-art \BnC algorithm in terms of running time, number of nodes in the search tree, and number of solved instances. 
		In particular, using the proposed \BnC algorithm, we are able to provide optimal solutions for $57$ previously unsolved benchmark instances within a time limit of one hour.
		
	\end{abstract}
	
	\keywords{Covering location problem, probabilistic coverage, branch-and-cut,  mixed integer programming}

\section{Introduction}

The maximal covering location problem (\MCLP), proposed by \smartcitet{Church1974}, is a fundamental problem in the field of facility location.
The \MCLP attempts to locate $K$ facilities to serve (or cover) a set of customers while maximizing the total demand of covered customers.
This problem arises in or serves as a building block in a
wide variety of applications including
emergency services \citep{Wang2016}, 
healthcare systems \citep{Mendoza-Gomez2024},
ecological monitoring and conservation 
\citep{Martin-Fores2021},
telecommunications \rev{\citep{Cordeau2019,Nematzadeh2023}},  
\rev{influence maximization \citep{Coniglio2022}, 
electric vehicle charging station placement \citep{Lamontagne2023},
and 
competitive facility location \citep{Legault2025}.}
For a comprehensive discussion on \MCLP and its variants, we refer to the surveys \smartcitet{Snyder2011,Farahani2012,Garcia2019,Marianov2024} and the references therein.

A fundamental assumption in the \MCLP is that the coverage behavior is deterministic (also known as binary coverage); that is,
a customer is either fully covered by a facility or not covered at all, 
typically depending on whether the customer is within the coverage distance (or coverage radius) of the facility. 
However, this assumption may not be realistic in many real-life applications where the coverage behavior is usually probabilistic; see \smartcitet{Church1983,Berman2003,Budge2010,Daskin2013,Azizi2022} among many of them.
As an example, in emergency services, the ``coverage'' often refers to whether an emergency vehicle can reach a patient within a prespecified time threshold.
However, empirical findings from \smartcitet{Budge2010} indicate that such events occur with varying probabilities that are typically a non-increasing function of the distance between the emergency vehicle and the patient.
This introduces the notation of \emph{probabilistic coverage}.
One typical probabilistic coverage model is the gradual coverage model \citep{Church1983,Berman2003} where a customer is assumed to be fully covered by a facility within a predefined distance $r$, and not covered beyond distance $R$ (with $R > r$); and
between these two distances, however, the customer is partially covered, with the amount of coverage represented by a non-increasing function of the distance \citep{Berman2003,Berman2010,Drezner2014}. 
As noted in \smartcitet{Drezner2014,Berman2019}, it is often convenient to interpret the amount of coverage as the probability that full coverage will occur. 

When multiple facilities can jointly cover a customer, the following question arises immediately: how to combine the coverage from multiple facilities to the customer and estimate the aggregated coverage (also called cooperative coverage)?
To address this, various aggregation models have been proposed in the literature.
Following the traditional assumption in the \MCLP where each customer is served by the closest facility, 
\smartcitet{Berman2003} defined the aggregated coverage of a customer as the maximum individual probability among the open facilities.
This relies on the assumption that the coverage events from different facilities to a customer are totally correlated.
In contrast, \smartcitet{Berman2009,OHanley2013} assumed that the coverage events from different facilities to a customer are fully independent, and modeled the aggregated coverage as
one minus the probability that none of the open facilities can cover the customer (i.e., the probability that the customer is covered by at least one open facility).
To bridge these two extreme cases, \smartcitet{Drezner2014} introduced a \emph{joint coverage model}, which is a convex combination of the maximum probability of being covered by a single facility and
the overall probability of being covered by at least one facility. 
This joint coverage model allows for a flexible modeling framework of coverage dependency, and has been widely used in the literature \citep{Berman2019,Alvarez-Miranda2019,Michopoulou2024,Baldassarre2024}.

Due to the cooperative coverage, 
allowing \emph{co-location} (i.e., opening multiple facilities at the same location) 
could yield better overall coverage for customers,
in contrast to the classic \MCLP where co-location offers no benefits.
As such, co-location has been considered in various cooperative covering location problems \rev{\citep{Berman2007,Berman2011b,Berman2019,OHanley2013,Drezner2014,Michopoulou2024}}.

\smartcitet{Berman2019} proposed the multiple probabilistic covering location problem (\MPCLP) {in a gradual coverage context}
that integrates the probabilistic and cooperative coverage and co-location of the facilities into the framework of the \MCLP.
The \MPCLP attempts to open $K$ facilities (allowing for co-location) while maximizing the overall demand covered by the open facilities under the joint coverage model of \smartcitet{Drezner2014}.
\smartcitet{Alvarez-Miranda2019} developed an exact branch-and-cut (\BnC) approach for solving the \MPCLP, based on four mixed integer linear programming (\MILP) formulations.
However, in order to model the inherent co-location of facilities, these \MILP formulations require a huge number of binary variables ($K$ for each potential facility location),
making the \BnC algorithm inefficient to solve \MPCLP{s}, especially for problems with large $K$.

\subsection{Contributions and outline}

The goal of this paper is to propose a more compact mathematical formulation for the \MPCLP (in terms of a small number of variables) and develop
a linear programming (\LP) based \BnC algorithm that achieves high computational efficiency.
The main contributions of the paper are summarized as follows. 
\begin{itemize}
	\item 
	By introducing integer variables to represent the number of facilities co-located at the same location, we first present a mixed integer nonlinear programming (\MINLP) formulation whose number of variables \rev{grows} only \rev{linearly} with the number of customers and potential facility locations but does not grow with $K$. To tackle the proposed \MINLP formulation which is highly nonlinear,
    we derive an equivalent \MILP reformulation based on two families of valid linear inequalities (i.e., submodular inequalities \citep{Nemhauser1981} and outer-approximation inequalities \citep{Duran1986,Fletcher1994}), and develop an \rev{\LP-based} \BnC algorithm in which the derived valid inequalities are separated at the nodes of the search tree.
	\item Through a polyhedral investigation of a mixed integer set, a \rev{specific} substructure arising in the \MPCLP, we further develop two families of strong valid inequalities for the \MPCLP.
	The first ones, called \emph{enhanced outer-approximation inequalities}, are guaranteed to be stronger than outer-approximation inequalities.
	The second ones, called \emph{lifted subadditive inequalities}, 
	are shown to be facet-defining for the convex hull of the considered mixed integer set under a mild condition.
	Both of these proposed inequalities can effectively strengthen the \LP relaxation of the underlying formulation, 
	thereby \rev{enabling} the proposed \BnC algorithm to converge more quickly.	
\end{itemize}
Computational results on a testbed of 240 benchmark \MPCLP instances demonstrate that, thanks to the small problem size and the strong \LP relaxation of the underlying formulation, 
the proposed \BnC algorithm significantly outperforms the state-of-the-art \BnC algorithm of \smartcitet{Alvarez-Miranda2019}
in terms of running time, \rev{the} number of nodes in the search tree, and the number of solved instances. 
In particular, using the proposed \BnC algorithm, we are able to provide optimal solutions for $57$ previously unsolved benchmark instances within a time limit of one hour.

This paper is organized as follows.
\cref{sect:problem-description} provides an \MINLP formulation for the \MPCLP.
\cref{sect:reformulation} 
proposes an \rev{\LP-based} \BnC algorithm for solving the \MPCLP based on submodular and outer-approximation inequalities.
\cref{sect:valid-ineq} derives two families of strong valid inequalities to speed up the convergence of the proposed \BnC algorithm.
\cref{sect:numerical-experiments} reports the computational results.
Finally, \cref{sect:conclusion} draws the conclusion.

\subsection{Solution approaches}\label{subsec:literature-review}

In this subsection, we review the relevant references on the solution approaches for the \MPCLP and its two special cases where the coverage events from different facilities to a customer are assumed to be totally correlated and fully independent, respectively.

We first review the {\MPCLP} with totally correlated coverage behavior of the facilities where the aggregated coverage of a customer is equal to the maximum probability among the open facilities.
\smartcitet{Church1983} investigated the case where a stepwise function is used to estimate the coverage of a facility to a customer, and used an off-the-shelf \MILP solver to solve the formulated problem.
Subsequently, \smartcitet{Berman2002} considered the same case and developed an alternative \MILP formulation that has a smaller number of variables and constraints.
\smartcitet{Berman2003} and \smartcitet{Karasakal2004} investigated the case where a linear decay function is used to estimate the coverage of a facility to a customer, and solved the problem by off-the-shelf \MILP solvers and a Lagrangian relaxation based heuristic algorithm, respectively. 
It is worth remarking that the \MPCLP with totally correlated coverage behavior of the facilities is equivalent to the well-known $K$-median problem \citep{Berman2010}, which allows state-of-the-art \MILP\rev{-}based approaches in \rev{\smartcitet{Avella2007,Garcia2010,Duran-Mateluna2023,Ljubic2024,Cherkesly2025}} to find a \rev{globally} optimal solution.

Next, we review the \MPCLP with fully independent coverage behavior of the facilities where the aggregated coverage of a customer is equal to one minus the probability that none of the open facilities can cover the customer.
\smartcitet{Daskin1983} investigated the case with uniform coverage probabilities and formulated the problem as a compact \MILP formulation.
Subsequently, \smartcitet{Aytug2002} and \cite{Rajagopalan2007} developed genetic and meta-heuristic algorithms (including evolutionary, tabu search, simulated annealing, and hill-climbing algorithms) to find a feasible solution, respectively.
For the \MPCLP with non-uniform coverage probabilities of the facilities, \smartcitet{Polasky2000} and \smartcitet{Camm2002} investigated the case arising in the nature reserve site selection and developed the greedy and \MILP approximation algorithms, respectively.
\smartcitet{OHanley2013} developed {the probability chain technique} to reformulate the problem as a compact \MILP formulation that can be solved to optimality by general-purpose \MILP solvers. 
However, due to the large problem size, only instances with up to $100$ potential facility locations and a $K$ value of up to $40$ were reported.

For the general case where the aggregated coverage is a convex combination of 
the maximum probability of being covered by a single facility and 
the overall probability of being covered by at least one facility \citep{Drezner2014}, 
\smartcitet{Berman2019} proposed several heuristic algorithms, including a tangent line approximation method, a greedy heuristic, and an ascent and tabu search algorithm, to find a feasible solution for the problem.
Using the submodularity of the objective function, \smartcitet{Alvarez-Miranda2019} developed a \BnC approach for solving the \MPCLP based on four \MILP formulations.
However, due to the huge problem size and the large search tree size, the \BnC approach is usually inefficient;
for a testbed of $240$ benchmark instances with numbers of customers and potential facility locations ranging from $100$ to $900$ and $K$ values ranging from $5$ to $200$, only $148$ instances can be solved to optimality by the \BnC approach of \smartcitet{Alvarez-Miranda2019} within a time limit of one hour.	

\section{Problem formulation}
\label{sect:problem-description}

In this section, we first briefly review the \MPCLP of \cite{Berman2019} and then present an \MINLP formulation for it.
\subsection{Problem definition}\label{subsec:problem-definition}

In the considered problem, we are \rev{given} a set of potential facility locations $\I$ and a set of customers $\J$, with a demand $d_j \rev{>0}$ for each $j \in \J$.
The probability of customer $j \in \J$ being covered by a facility at location $i \in \I$ is denoted as $p_{ij}$. 
Let $\CS \subseteq \I^K$ be a subset of open facilities, where $\I^K := \I \times [K]$ (with $[K]= \{1, \ldots,K\}$).
Here, $(i,k) \in \I^K$ denotes the $k$-th (open) facility at location $i$ (throughout, co-locating multiple facilities at a single location is allowed).
In a probabilistic setting, the cooperative coverage naturally takes the value of the joint probability of a customer being covered by at least one open facility \citep{Berman2019}.
If the coverage events are totally correlated, then the cooperative coverage is equal to the maximum individual probability among these facilities, i.e., $\max_{(i,k) \in \CS} p_{i j}$; 
conversely, if the coverage events are independent, then the cooperative coverage can be calculated as $1 - \prod_{(i,k) \in \CS} (1 - p_{ij})$.
To model the spectrum of dependency of the coverage events,
a convex combination of the above two extreme cases, called the \emph{joint coverage function}, is used to measure the cooperative coverage \citep{Drezner2014,Berman2019,Alvarez-Miranda2019,Michopoulou2024,Baldassarre2024}: 
\begin{equation}\label{JCF-set}
	p_j(\theta, \CS) = \theta \left(\max_{(i,k) \in \CS} p_{ij}\right) 
	+ (1 - \theta) \left(1 - \prod_{(i,k) \in \CS} (1-p_{ij})\right).
\end{equation}
Here, parameter $\theta \in [0,1]$ weighs between the two extreme cases of total correlation ($\theta = 1$) and full independence ($\theta = 0$), thereby allowing for flexible modeling of coverage dependency.

The \MPCLP
attempts to open $K$ facilities so that the total demand cooperatively covered by the open facilities is maximized and can be formally stated as 
\begin{align}\label{mgclp-Set-MINLP}\tag{MPCLP}
	\max_{\CS \subseteq \I^K, \, |\CS| \le K}  \sum_{j\in \J} d_j \left(\theta \left(\max_{(i,k) \in \CS} p_{ij}\right) 
	+ (1 - \theta) \left(1 - \prod_{(i,k) \in \CS} (1-p_{ij})\right) \right).
\end{align}
When $\theta = 1$, \eqref{mgclp-Set-MINLP} reduces to the case with totally correlated coverage behavior of the facilities \citep{Church1983,Berman2002,Berman2003,Karasakal2004}; 
when $\theta= 0$, it reduces to the case with fully independent coverage behavior of the facilities \citep{Polasky2000,Camm2002,OHanley2013}.

Note that as the objective function of \eqref{mgclp-Set-MINLP} is non-decreasing with respect to $\CS$, we can equivalently state the cardinality constraint $|\CS|=K$ as $|\CS| \le K$.
Also note that if $K=1$, then determining the optimal solution of problem  \eqref{mgclp-Set-MINLP} is trivial --- one can simply compute $\sum_{j \in \J} \rev{d_j} p_{j}(\theta, \{(i,1)\})$ for each $i \in \I$ and choose the facility corresponding to the best objective value. 
Therefore, for simplicity of discussion, we assume $K \geq2$ throughout the paper.

\subsection{An \MINLP formulation for \eqref{mgclp-Set-MINLP}}\label{subsect:MINLP-formulation}

Next, we present an \MINLP formulation for \eqref{mgclp-Set-MINLP}. 
To achieve this, we first 
notice that facilities $(i,k)$, $k \in [K]$, at the same location $i$ exhibit identical coverage probabilities $p_{ij}$.
Thus, 
the joint coverage function $p_j(\theta, \CS)$ in \eqref{JCF-set} can be rewritten as 
\begin{equation}\label{JCF-set-re}
	p_j(\theta, \CS) = \theta \left(\max_{i \in \I \,: \,\CS_i \neq \varnothing} p_{ij}\right) 
	+ (1 - \theta) \left(1 - \prod_{i \in \I} (1-p_{ij})^{|\CS_i|}\right),
\end{equation}
where $\CS_i := \{(\ell,k) \in \CS: \ell = i\}$ is the set of open facilities at location $i \in \I$.
This motivates us to introduce, for each $i \in \I$,
a binary variable $z_i$ indicating whether $\CS_i$ is nonempty or not, 
and an integer variable $y_i$ denoting the number of facilities $|\CS_i|$ at location $i$, and rewrite the set optimization formulation \eqref{mgclp-Set-MINLP} as an \MINLP formulation:
\begin{align}
	\label{mgclp-Int-MINLP}\tag{MINLP}
		\max_{y,\,z} \left\{ \sum_{j\in \J}  d_j \left (\theta \left(\max_{i \in \I} p_{ij} z_i\right) + (1 - \theta) \left(1 - \prod_{i\in \I} (1-p_{ij})^{y_i}\right)\right) \, : \, \eqref{cons:card-y}\text{--}\eqref{cons:z-domain}\right\},
\end{align}
where 
\begin{subequations}
	\begin{align}
		& \sum_{i\in \I } y_i \le K, \label{cons:card-y}\\
		& z_i \le y_i \le K z_i , \ \forall \ i \in \I , \label{cons:zi-yi}\\
		&  y_i \in \mathbb{Z}_{\rev{\ge 0}}, \ \forall \ i \in \I , \label{cons:y-domain}\\
		&  z_i \in \{0,1\}, \ \forall \ i \in \I. \label{cons:z-domain}
	\end{align}
\end{subequations}
Constraint \eqref{cons:card-y} restricts the total number of open facilities to be less than or equal to $K$.
Constraints \eqref{cons:zi-yi} together with the integrality constraints \eqref{cons:y-domain}--\eqref{cons:z-domain} show that $z_i=1$ if and only if $1 \leq y_i \leq K$.
Throughout the paper, we adopt the convention that $0^0=1$ (as to address the case $p_{ij}=1$ and $y_i=0$).

Two remarks on the proposed \eqref{mgclp-Int-MINLP} are in order.
First, \eqref{mgclp-Int-MINLP} 
can be solved using general-purpose \MINLP solvers such as \gurobi and \scip.
However, due to the highly nonlinear objective function ({particularly} the {exponential} part), 
our preliminary experiments show that directly solving \eqref{mgclp-Int-MINLP} with a general-purpose \MINLP solver is inefficient, especially for the large-scale cases. 
In the next section, we will develop an efficient \BnC algorithm based on an \MILP reformulation.
Second, to model the co-location of facilities, one can introduce, for each $i\in \I$ and $k \in [K]$, a binary variable $x_i^k$ to denote whether the $k$-th facility at location $i$ is opened or not \citep{OHanley2013,Alvarez-Miranda2019,Michopoulou2024}. 
However, this leads to an \MINLP formulation with at least $|\I|K$ binary variables. 
In sharp contrast, formulation \eqref{mgclp-Int-MINLP} only involves the $2|\I|$ variables $\{y_i\}_{\rev{i \in \I}}$ and $\{z_i\}_{\rev{i \in \I}}$, which is one order of magnitude smaller, thereby facilitating the design of efficient algorithms (such as the \BnC algorithm \rev{introduced below}) for solving the \MPCLP.

\section{A branch-and-cut  algorithm based on formulation \eqref{mgclp-Int-MINLP}}
\label{sect:reformulation}

To develop a \BnC approach for solving problem \eqref{mgclp-Int-MINLP}, let us rewrite 
problem \eqref{mgclp-Int-MINLP} in the hypograph forms of $ \max_{i \in \I} p_{ij} z_i $ and $ 1 - \prod_{i\in \I} (1-p_{ij})^{y_i}$:
\begin{align}\tag{MINLP'}	\label{mgclp-Int-MINLP2}
	\max_{y,\,z,\,\zeta,\,\eta} \left\{ \sum_{j\in \J} d_j \left(\theta \zeta_j + (1-\theta)\eta_j\right) \, : \,  \eqref{cons:card-y}\text{--}\eqref{cons:z-domain},~\eqref{cons:zeta-nonlinear1},~\eqref{cons:eta-nonlinear1}\right\}, 
\end{align}
where
\begin{subequations}
	\begin{align}
		&  \zeta_j  \leq \max_{i \in \I} p_{ij} z_i, \ \forall \ j \in \J, \label{cons:zeta-nonlinear1}\\
		& \eta_j  \leq 1 - \prod_{i\in \I} (1-p_{ij})^{y_i}, \ \forall \ j \in \J. \label{cons:eta-nonlinear1}
	\end{align}
\end{subequations}
Here, for each $j \in \J$, $\zeta_j$ and $\eta_j$ measure the nonlinear functions $ \max_{i \in \I} p_{ij} z_i$ and $1 - \prod_{i\in \I} (1-p_{ij})^{y_i}$, respectively. 
In the following, we will first derive linearization techniques to \rev{replace} the nonlinear constraints \eqref{cons:zeta-nonlinear1} and \eqref{cons:eta-nonlinear1} in problem \eqref{mgclp-Int-MINLP2} \rev{with} linear inequalities, and then  develop an \rev{\LP-based} \BnC algorithm where the derived linear inequalities are separated at the nodes of the search tree.
We also compare the proposed \BnC algorithm with the state-of-the-art \BnC algorithm of \cite{Alvarez-Miranda2019}.

Throughout, for simplicity of notations, we drop the subscript $j$ in \eqref{cons:zeta-nonlinear1} and \eqref{cons:eta-nonlinear1}, and consider the following constraints:
\begin{align}
	&  \zeta \leq \max_{i \in \I} p_i z_i, \label{cons:zeta-nonlinear} \\
	& \eta  \leq 1 - \prod_{i\in \I} (1-p_i)^{y_i}.\label{cons:eta-nonlinear}
\end{align}

\subsection{Linearization of constraint \eqref{cons:zeta-nonlinear}} \label{subsect:linearization-max}

Using the results in \citet[Theorem 8]{Nemhauser1981}, constraint \eqref{cons:zeta-nonlinear} in problem \eqref{mgclp-Int-MINLP2} can be equivalently \rev{replaced with} the following $|\I|+1$ linear inequalities (called \emph{submodular inequalities}): 
\begin{equation}\label{cons:zeta-linear}
	\zeta \le p_{\ell} + \sum_{i\in \I} (p_i - p_{\ell})^+ z_i, \ \forall \ \ell \in \{0\}\cup \I,
	\end{equation}
where $p_0=0$ and ${(\cdot)}^+= \max\{0,\cdot\}$; see also \smartcitet{Alvarez-Miranda2019}.
Inequalities \eqref{cons:zeta-linear} guarantee that 
$\zeta$ is upper-bounded by the maximum probability among the open facilities.
Specifically, for $z \in\{0,1\}^{|\I|}$, if $ z_i=0$ for all $i \in \I$, this upper bound is $p_0=0$; otherwise,  
this upper bound is $p_{\ell_0}$ where $\ell_0 \in \argmax_{i \in \I}p_i z_i$ (which is achieved by inequality \eqref{cons:zeta-linear} with $\ell = \ell_0$).
Thus, $\max_{i\in \I} p_i z_i = \min_{\ell \in \{0\}\cup \I} p_{\ell} + \sum_{i\in \I}(p_i - p_{\ell})^+ z_i$ holds for all $z \in \{0,1\}^{|\I|}$, 
and the equivalence of \rev{the representations of} \eqref{cons:zeta-nonlinear} and \eqref{cons:zeta-linear} in problem \eqref{mgclp-Int-MINLP2} follows.

A variant class of inequalities \eqref{cons:zeta-linear}, which is used to represent $\min_{i \in \I} p_i z_i$ in the objective of minimization problems, 
was developed by \smartcitet{Magnanti1981,Cornuejols1990,Fischetti2017} for the uncapacitated facility location problem. 
\smartcitet{Cornuejols1980,Duran-Mateluna2023} used these inequalities for solving the {$K$-median} problem\rev{, and \smartcitet{Ljubic2024} used similar inequalities for solving a class of discrete ordered median problems}.

\subsection{Linearization of constraint \eqref{cons:eta-nonlinear}}	\label{subsect:linearization-prod}

One well-known approach for linearizing a nonlinear constraint of the form $\eta \leq f(y)$ is to use the supporting hyperplanes of the hypograph of function $f(y)$ at all points $y$.
This approach, known as the outer-approximation method \citep{Duran1986,Fletcher1994}, relies on the property that $f(y)$ is a concave and continuously differentiable function.
Unfortunately, function $\Phi(y) := 1 - \prod_{i \in \I} (1-p_i)^{y_i}$, the right-hand \rev{side} of the nonlinear constraint \eqref{cons:eta-nonlinear} in problem \eqref{mgclp-Int-MINLP2}, is generally not differentiable, as demonstrated in \cref{ex1}.
\begin{example}
	\label{ex1}
	Let $|\I| = 1$ and $p = 1$.
	Then function $\Phi(y) = 1 - (1 - p)^{y} = 1 - 0^{y}$
	takes the value of $0$ if $y = 0$ and $1$ if $y > 0$.
	As a result, $\lim_{y \rightarrow 0^+} \Phi(y) = 1 \neq 0 = \Phi(0)$, which implies  
	that $\Phi(y)$ is not right-continuous at $y = 0$, and thus is not differentiable at $y = 0$.
\end{example}

The reason for the nondifferentiablity of function $\Phi(y)$ is that $p_i =1$ holds for some $i \in \I$.
Indeed, if $p_i < 1$ holds for all $i \in \I$, then function $\Phi(y)$ is {continuously differentiable} as it can be expressed as {the composition of two smooth functions: 
	$h(g) = 1 - e^{-g}$ and $g(y) = \sum_{i \in \I} [- \ln(1-p_i)] y_i$}.
However, the case $p_i=1$ for some $i \in \I$ (i.e., the customer is fully covered by some facility location) frequently arises in real applications. 
As an example, in the gradual coverage context,
a customer  is considered to be fully covered by a facility  if the distance between them is less than or equal to the prespecified minimum coverage distance $r$ \citep{Church1983,Berman2003}.
							
To bypass the difficulty posed by the nondifferentiablity of function $\Phi(y)$ in problem \eqref{mgclp-Int-MINLP2}, we introduce the following function  
\begin{equation}\label{def:tilde-Phi}
	\tilde{\Phi}(y) :=
	1 - \prod_{i\in \I_{\Partial}} (1 - p_i)^{y_i} 
	+ \sum_{i\in \I_{\Full}} y_i,
\end{equation}
where $\I_{\Partial} := \{i \in \I\,: \,0 \le p_i < 1\}$ and $\I_{\Full} := \{i \in \I\, :\, p_i = 1\}$ are the sets of facilities that can partially and fully cover the customer, respectively.
It is easy to see that $\tilde{\Phi}(y)$ is differentiable at all $y \in \mathbb{Z}_{\rev{\ge 0}}^{|\I|}$.
Moreover, \begin{theorem}\label{cor:linearized-prod-term}
	Let 
	\begin{align}
		& \U = \left\{ 
		(\eta, y) \in \R \times \rev{\mathbb{Z}^{|\I|}_{\geq 0}}: \eta \le \Phi(y) = 1 - \prod_{i\in \I}(1 - p_i)^{y_i}
		\right\}, \label{Sdef}\\
		& \tilde{\U} = \left\{
		(\eta, y) \in \R \times \rev{\mathbb{Z}^{|\I|}_{\geq 0}}: \eta \le \tilde{\Phi}(y)
		= 1 - \prod_{i\in \I_{\Partial}} (1 - p_i)^{y_i} 
		+ \sum_{i\in \I_{\Full}} y_i, \ 
		\eta \le 1
		\right\},\label{tSdef}
	\end{align}
	Then it follows that $\U = \tilde{\U}$.
	\end{theorem}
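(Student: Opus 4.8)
The plan is to reduce the claimed set equality to a pointwise identity in $y$. Both $\U$ and $\tilde{\U}$ live in $\R \times \Z^{|\I|}_{\ge 0}$ and, for a fixed feasible integer vector $y \in \Z^{|\I|}_{\ge 0}$, are each cut out by upper bounds on the single real variable $\eta$: membership in $\U$ requires $\eta \le \Phi(y)$, while membership in $\tilde{\U}$ requires $\eta \le \tilde{\Phi}(y)$ and $\eta \le 1$. Hence it suffices to prove that, for every $y \in \Z^{|\I|}_{\ge 0}$,
\[
\Phi(y) \;=\; \min\{\tilde{\Phi}(y),\, 1\},
\]
after which the equality of the two sets is immediate by taking the union over all admissible $y$.

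To establish this identity I would split on the quantity $\sigma := \sum_{i\in\I_{\Full}} y_i \in \Z_{\ge 0}$, i.e.\ on whether any fully covering facility is actually used, invoking throughout the convention $0^0 = 1$ (which is exactly what makes $\Phi$ well-defined, since $1-p_i = 0$ precisely for $i \in \I_{\Full}$). In the case $\sigma = 0$, every factor of $\prod_{i\in\I}(1-p_i)^{y_i}$ indexed by $\I_{\Full}$ equals $0^0 = 1$, so $\prod_{i\in\I}(1-p_i)^{y_i} = \prod_{i\in\I_{\Partial}}(1-p_i)^{y_i}$; this gives $\Phi(y) = \tilde{\Phi}(y)$, and since the product over $\I_{\Partial}$ lies in $[0,1]$ we also have $\Phi(y) \le 1$, so $\min\{\tilde{\Phi}(y),1\} = \Phi(y)$. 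In the case $\sigma \ge 1$, some $i\in\I_{\Full}$ has $y_i \ge 1$, hence $(1-p_i)^{y_i} = 0$ and the whole product vanishes, giving $\Phi(y) = 1$; on the other hand $\tilde{\Phi}(y) = \bigl(1 - \prod_{i\in\I_{\Partial}}(1-p_i)^{y_i}\bigr) + \sigma \ge 0 + 1 = 1$, so again $\min\{\tilde{\Phi}(y),1\} = 1 = \Phi(y)$. Combining the two cases yields the displayed identity, and therefore $\U = \tilde{\U}$.

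The argument is entirely elementary, so I do not expect a genuine obstacle; the only point requiring care is the disciplined use of the $0^0 = 1$ convention and the recognition that the linear term $\sum_{i\in\I_{\Full}} y_i$ in $\tilde{\Phi}$, paired with the explicit cap $\eta \le 1$, is precisely the device that reproduces the ``jump'' of the discontinuous function $\Phi$ exhibited in \cref{ex1} while keeping the right-hand side differentiable on $\Z^{|\I|}_{\ge 0}$. I would double-check the edge configurations ($y = \boldsymbol{0}$, $\I_{\Partial} = \varnothing$, $\I_{\Full} = \varnothing$) to confirm they are subsumed by the two cases above, but these are bookkeeping rather than substance.
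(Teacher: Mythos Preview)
Your proposal is correct and follows essentially the same approach as the paper: reduce the set equality to the pointwise identity $\Phi(y)=\min\{\tilde{\Phi}(y),1\}$ on $\Z^{|\I|}_{\ge 0}$, then split into the two cases $\sum_{i\in\I_{\Full}}y_i=0$ and $\sum_{i\in\I_{\Full}}y_i\ge 1$. The paper's proof is identical in structure, the only cosmetic difference being that it records the strict bound $\Phi(y)<1$ in the first case (from $\prod_{i\in\I_{\Partial}}(1-p_i)^{y_i}>0$), whereas your non-strict $\Phi(y)\le 1$ already suffices.
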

\begin{myproof}
It suffices to show that $\Phi(y) = \min\{1, \tilde{\Phi}(y)\}$ for all $y \in \rev{\mathbb{Z}^{|\I|}_{\geq 0}}$.
By $0 \le p_i < 1$ for $i \in \I_{\Partial}$, we have $0 < \prod_{i \in \I_{\Partial}} (1-p_i)^{y_i} \le 1$ for  $y \in \Z_{\rev{\ge 0}}^{|\I|}$.
Thus, 
\begin{itemize}
	\item [(i)] if $y_i = 0$ for all $i \in \I_{\Full}$, then $\Phi(y) = \tilde{\Phi}(y)= 1- \prod_{i \in \I_{\Partial}} (1-p_i)^{y_i}  < 1$; 
	\item [(ii)] if $y_i \geq 1$ for some $i \in \I_{\Full}$, then $\Phi(y) = 1 \leq 1- \prod_{i \in \I_{\Partial}} (1-p_i)^{y_i}  + 1 \leq 1- \prod_{i \in \I_{\Partial}} (1-p_i)^{y_i} + \sum_{i \in \I_{\Full}} y_i = \tilde{\Phi}(y)$.
\end{itemize}
In both cases, we have $\Phi(y) = \min\{1, \tilde{\Phi}(y)\}$.
\end{myproof}

\cref{cor:linearized-prod-term} indicates that the {representation} of $\eta \leq \Phi(y)$ and that of $\eta \leq \tilde{\Phi}(y)$ and $\eta \leq 1$ in problem \eqref{mgclp-Int-MINLP2} are \emph{equivalent}. 
This implies that to linearize $\eta \leq \Phi(y)$ with the nondifferentiable function $\Phi(y)$, we can equivalently linearize $\eta \leq \tilde{\Phi}(y)$ with the continuously {differentiable} function $\tilde{\Phi}(y)$.
Moreover, $h(g(y)) := 1 - \prod_{i\in \I_{\Partial}} (1 - p_i)^{y_i} $ is a concave function over $y$ as $h(g) = 1 - e^{-g}$ is concave and non-decreasing, and $g(y) = \sum_{i \in \I_{\Partial}} [- \ln(1-p_i)] y_i$ is linear \citep{Boyd2004}, 
implying that
$\tilde{\Phi}(y) = 	1 - \prod_{i\in \I_{\Partial}} (1 - p_i)^{y_i} 
+ \sum_{i\in \I_{\Full}} y_i$ is also a concave function.
The {differentiability} and concavity of $\tilde{\Phi}(y)$ enable to equivalently represent the nonlinear constraint $\eta \leq \tilde{\Phi}(y)$ as linear inequalities using the outer-approximation method \citep{Duran1986,Fletcher1994}, as detailed in the following.

Given a vector \rev{$y^* \in \rev{\mathbb{Z}^{|\I|}_{\geq 0}}$},  from the concavity and differentiability of $\tilde{\Phi}(y)$, 
we can bound the value of $\tilde{\Phi}(y)$ from above by its first-order approximation at $y^*$, obtaining the following valid inequality for $\rev{\tilde{\U}}$:
\begin{equation}\label{cons:prod-outer-approx}
	\eta 
	\le \tilde{\Phi}(y^*) 
	+ \sum_{i\in \I} \frac{\partial \tilde{\Phi}}{\partial y_i} (y^*) \cdot  (y_i - y_i^*).
\end{equation}
Note that
\begin{equation*}
	\frac{\partial \tilde{\Phi}}{\partial y_i} (y^*)
	= - [\ln(1-p_i)] \prod_{\ell \in \I_{\Partial}}(1-p_{\ell})^{y_{\ell}^*} , ~\forall~ i \in \I_{\Partial},~\text{and}
	~\frac{\partial \tilde{\Phi}}{\partial y_i} (y^*)
	=1,~\forall~i \in \I_{\Full}.
\end{equation*}
Thus, the right-hand side of inequality \eqref{cons:prod-outer-approx} reduces to
\begin{equation*}
	\footnotesize
	\begin{aligned}
		& \tilde{\Phi}(y^*) 
		+ \sum_{i\in \I} \frac{\partial \tilde{\Phi}}{\partial y_i} (y^*) (y_i - y_i^*)\\
		& = 1 - \prod_{i\in \I_{\Partial}}(1-p_i)^{y_i^*} + \sum_{i\in \I_{\Full}} y_i^*
		- \sum_{i\in \I_{\Partial}}  [\ln(1-p_i)] \cdot \prod_{\ell \in \I_{\Partial}}(1-p_{\ell})^{y_{\ell}^*}  \cdot 
		(y_i -  y_i^*)
		+ \sum_{i\in \I_{\Full}}(y_i -  y_i^*)\\
		& = 1 - \prod_{i\in \I_{\Partial}}(1-p_i)^{y_i^*} + \sum_{i\in \I_{\Partial}}\left([\ln(1-p_i)] \prod_{\ell \in \I_{\Partial}}(1-p_{\ell})^{y^*_{\ell}}\right)  y_i^* -\sum_{i\in \I_{\Partial}}\left([\ln(1-p_i)] \prod_{\ell \in \I_{\Partial}}(1-p_{\ell})^{y^*_{\ell}}\right)y_i  + \sum_{i\in \I_{\Full}} y_i.
	\end{aligned}
\end{equation*}
Therefore, inequality \eqref{cons:prod-outer-approx} can be written as 
\begin{align}\label{cons:prod-outer-approx-explicit}
			\eta \le 
	c(y^*) + \sum_{i\in \I_{\Partial}} a_i(y^*) y_i + \sum_{i\in \I_{\Full}} y_i,
\end{align}
where 
\begin{align}
	& c(y^*)  :=  1 - \prod_{i\in \I_{\Partial}}(1-p_i)^{y_i^*} +   \left(\prod_{\ell \in \I_{\Partial}}(1-p_{\ell})^{y^*_{\ell}} \right)\sum_{i\in \I_{\Partial}}[\ln(1-p_i)] y_i^*,
	\label{def:c}\\
	& a_i(y^*)  := - [\ln(1-p_i)] \prod_{\ell \in \I_{\Partial}}(1-p_{\ell})^{y^*_{\ell}}, 
	\  \forall \ i \in \I_{\Partial}.
	\label{def:ai}
\end{align}
\begin{remark}\label{remark1}
	Let $\Y = \{y \in  \rev{\mathbb{Z}^{|\I|}_{\geq 0}}\, : \, \sum_{i \in \I} y_i \leq K \}$. Then $\left\{ (\eta, y) \in \R \times {\Y}\, :\,\eta \le \Phi(y) = 1 - \prod_{i\in \I}(1 - p_i)^{y_i} \right\}=\left\{(\eta, y) \in \R \times \rev{\Y}\, :\, \eta \leq c(y^*) + \sum_{i\in \I_{\Partial}} a_i(y^*) y_i + \sum_{i\in \I_{\Full}} y_i,~\rev{\forall~{y^* \in \Y}},~\eta \leq 1\right\}$.
\end{remark}
\rev{From} \cref{cor:linearized-prod-term} and \cref{remark1}, \rev{it follows} that the nonlinear constraint \eqref{cons:eta-nonlinear} in problem \eqref{mgclp-Int-MINLP2} can be equivalently \rev{represented} as the linear outer-approximation inequalities \eqref{cons:prod-outer-approx-explicit} (for all $y^*\in \rev{\Y}$) and $\eta\leq 1$.

\subsection{A branch-and-cut approach based on submodular and outer-approximation inequalities}
\label{subsec:MILP}

From \cref{subsect:linearization-max,subsect:linearization-prod}, we can develop the following equivalent \MILP formulation for the \MPCLP based on submodular inequalities \eqref{cons:zeta-linear} and outer-approximation inequalities \eqref{cons:prod-outer-approx-explicit}:
\begin{align}
		\tag{MILP}\label{mgclp-Int-MILP}
	\max_{y,\,z,\,\zeta,\,\eta} \left\{\sum_{j \in \J} d_j \left(\theta \zeta_j + (1 - \theta) \eta_j \right)\, : \, \eqref{cons:card-y}\text{--}\eqref{cons:z-domain},~\eqref{cons:zeta-linear-all},~\eqref{cons:eta-linear},~\eqref{cons:eta-domain}\right\},
\end{align}
where
\begin{subequations}
	\begin{align}
		& \zeta_j \le p_{\ell j} + \sum_{i \in \I} (p_{ij}- p_{\ell j})^+ z_i, \ \forall 
		\  j \in \J,~\forall~ \ell \in \{0\} \cup \I, \label{cons:zeta-linear-all}\\
		& \eta_j \le c_{j}(y^*) + \sum_{i\in \I_{j, \Partial}} a_{ij}(y^*) y_i + \sum_{i\in \I_{j, \Full}} y_i, \  
		\ \forall \ j \in \J, 
		\ \forall \ y^* \in \rev{\Y}, 
		\label{cons:eta-linear}\\
		& \eta_j \le 1, \ \forall \ j \in \J. \label{cons:eta-domain}
	\end{align}
\end{subequations}
Here, $\Y=\{y \in  \rev{\mathbb{Z}^{|\I|}_{\geq 0}}\, : \, \sum_{i \in \I} y_i \leq K \}$, $p_{0j}=0$, $\I_{j, \Partial} =\{ i \in \I \, : \, 0 \le p_{ij}<1\}$, and $\I_{j, \Full} = \{ i \in \I \, : \, p_{ij}=1 \}$ for $j \in \J$; and
the submodular inequalities \eqref{cons:zeta-linear-all},  outer-approximation inequalities \eqref{cons:eta-linear}, and $\eta_j \le 1$ for $j \in \J$
are the linearizations of the nonlinear constraints \eqref{cons:zeta-nonlinear1} and \eqref{cons:eta-nonlinear1},  respectively, with subscript $j$.

Although \eqref{mgclp-Int-MILP} is an \MILP problem, its quadratic number of {constraints} \eqref{cons:zeta-linear-all}, and particularly the exponential number of {constraints} \eqref{cons:eta-linear} make it impractical to be directly solved by modern \MILP solvers such as \gurobi, \cplex, \rev{or} \xpress.  
To overcome this weakness, we implement a \BnC algorithm 
in which constraints \eqref{cons:zeta-linear-all} and \eqref{cons:eta-linear} are separated at the nodes in the search tree.
Given an \LP relaxation solution $(y^*, z^*, \zeta^*, \eta^*)$ encountered at a node in the search tree, we separate inequalities \eqref{cons:zeta-linear-all} and \eqref{cons:eta-linear} using the following procedures.

\begin{itemize}
	\item 
	\rev{For each fixed $j \in \J$,} determining whether there exists \rev{an inequality of \eqref{cons:zeta-linear-all} violated by $(y^*, z^*, \zeta^*, \eta^*)$} can be done by checking the corresponding $|\I| +1$ inequalities, and thus the computational complexity is $\CO(|\I|^2)$.
	However, using a similar idea as in \smartcitet{Fischetti2017,Duran-Mateluna2023}, we can find the most violated inequality with an improved complexity of $\CO(|\I|)$. 
	For completeness, we provide a detailed proof in 	Appendix \ref*{appendix:separation-zeta-linear}.
									
	\item \rev{For each fixed $j \in \J$}, 
    if {$y^* \in \mathbb{Z}_{\ge 0}^{|\I|}$}, 
    the exact separation of inequalities \eqref{cons:eta-linear} can be done by first computing the coefficients $a_{ij}(y^*)$ for $i \in \I_{j, \Partial}$ and the constant $c_{j}(y^*)$ in \eqref{def:c}--\eqref{def:ai} (where subscript $j$ is omitted), and then checking whether inequality $\eta_j \le c_{j}(y^*) + \sum_{i\in \I_{j, \Partial}} a_{ij}(y^*) y_i + \sum_{i\in \I_{j, \Full}} y_i$ is violated.
	Clearly, this can be performed in $\CO(|\I|)$ time.
	If $y^* \notin \mathbb{Z}_{\rev{\ge 0}}^{|\I|}$, 
    we use, however, a heuristic procedure to separate inequalities \eqref{cons:eta-linear}, also with a complexity of $\CO(|\I|)$. 
	Specifically, we first round $y^*$ to its nearest integer vector $\tilde{y}$ (i.e,  $\tilde{y}_i= \lfloor y_i^*\rfloor$ if $y_i^* - \lfloor y_i^*\rfloor < 0.5$ and $\tilde{y}_i= \lceil y_i^*\rceil$ otherwise) 
	and compute coefficients $a_{ij}(\tilde{y})$ for $i \in \I_{j, \Partial}$ and the constant $c_{j}(\tilde{y})$. 
	Then, we add the inequality  $\eta_j \le c_{j}(\tilde{y}) + \sum_{i\in \I_{j, \Partial}} a_{ij}(\tilde{y}) y_i + \sum_{i\in \I_{j, \Full}} y_i$  if it is violated by  $(y^*, z^*, \zeta^*, \eta^*)$.
\end{itemize}
Note that the exact separation of inequalities \eqref{cons:zeta-linear-all} and \eqref{cons:eta-linear} at points with integral vectors $(y^*, z^*)$ is enough to ensure the convergence and correctness of the \BnC algorithm for problem \eqref{mgclp-Int-MILP} (which follows from the fact that there are at most $(|\I|+1)|\J|$ and {$\sum_{k=0}^{K} {|\I|}^{k} |\J|$} inequalities of  \eqref{cons:zeta-linear-all} and \eqref{cons:eta-linear}, respectively).
Nevertheless,
we can speed up the convergence of the \BnC algorithm by constructing violated inequalities \eqref{cons:zeta-linear-all} and \eqref{cons:eta-linear} {at fractional solutions} encountered inside the search tree.
In our implementation, we separate both inequalities {at the nodes} in the \BnC search tree.

\subsection{Comparison with the state-of-the-art branch-and-cut approach  of \citet{Alvarez-Miranda2019}}\label{subsec:formulation-comparison}
\rev{By leveraging the submodularity} of the set functions  $\max_{(i,k) \in {\CS}} p_{ij}$ and $1 - \prod_{(i,k) \in \CS} (1-p_{ij})$ (where $j \in \J$ and $\CS\subseteq \I ^{K}$) in \eqref{mgclp-Set-MINLP} and the technique of binary representations of the integer variables \citep{Dash2018}, \smartcitet{Alvarez-Miranda2019} developed four \MILP formulations for the \MPCLP.
Here we compare the proposed \MILP formulation \eqref{mgclp-Int-MILP} with those in \smartcitet{Alvarez-Miranda2019}. 
For simplicity of illustration, we only compare \eqref{mgclp-Int-MILP} with the most related one, in which for each $j \in \J$, two continuous variables are used to measure the set functions  $\max_{(i,k) \in\CS} p_{ij}$ and $1 - \prod_{(i,k) \in \CS} (1-p_{ij})$ in problem \eqref{mgclp-Set-MINLP}.
Note that according to the results in \smartcitet{Alvarez-Miranda2019}, this formulation computationally \rev{dominates} the other three alternatives.

We first briefly review the \MILP formulation in \smartcitet{Alvarez-Miranda2019}. 
We introduce, for each $i \in \I $ and  $k \in [K]$, a binary variable $x_i^k$ 
denoting whether the $k$-th facility at location $i$ is opened.
Then the \MILP formulation proposed by \smartcitet{Alvarez-Miranda2019} can be written as 
\begin{align}\label{mgclp-Bin-MILP}\tag{MILP-B}
	\max_{x,\, \zeta, \, \eta} \ 
	\left\{
	\sum_{j \in \J} d_j \left(\theta \zeta_j + (1 -  \theta) \eta_j\right)\, : \,\eqref{cons:card-xik}\text{--}\eqref{cons:x-domain}  \right\}, 
\end{align}
where
\begin{subequations}
\begin{align}
	& \sum_{i \in \I } \sum_{k \in [K]} x_i^k \le K,
	\label{cons:card-xik} \\
	& x_i^k \ge x_i^{k+1}, \ \forall \ i \in \I , \ \forall \ k \in [K-1],
	\label{cons:sym-xik}\\
	& \zeta_j \le p_{\ell j} + \sum_{i\in \I} (p_{ij} - p_{\ell j})^+ x_i^1, \ \forall \ j \in \J,~ \forall \ \ell \in \{0\}\cup \I,
	\label{cons:zeta-linear-xik}\\
	& \eta_j \le \Phi_j(y^*) + \sum_{i \in \I} \rho_{ij}(y^*) \sum_{k = y_i^* + 1}^{K} x_i^k, \ 
	\forall \ j \in \J, \ \forall \ y^* \in \rev{\Y},
	\label{cons:eta-linear-xik}\\
	& x_i^k \in \{0,1\}, \ \forall \ i \in \I , \ \forall \ k \in [K]. 
	\label{cons:x-domain}
\end{align}
\end{subequations}
Here, \rev{we recall that $\Y=\{y \in  \rev{\mathbb{Z}^{|\I|}_{\geq 0}}\, : \, \sum_{i \in \I} y_i \leq K \}$}; 
for each $j \in \J$ and $i \in \I$, $\rho_{ij}(y^*) := \Phi_j(y^* + \be^i) - \Phi_j(y^*) 
= p_{ij} \prod_{\ell\in \I} (1-p_{\ell j})^{y^*_{\ell}}$ \rev{where}  $\be^i$ denotes the $i$-th unit vector of dimension $|\I|$;
constraint \eqref{cons:card-xik} ensures that at most $K$ facilities are open;
constraints \eqref{cons:sym-xik} require that if the $(k+1)$-th facility at a location is opened, then the first $k$ facilities in the same location must also be opened (which are used to exclude symmetric solutions with regard to co-location);
constraints \eqref{cons:zeta-linear-xik} are similar to those in \eqref{cons:zeta-linear-all} and are used to characterize $\zeta_j 
\leq \max_{(i,k) \in \CS} p_{ij}$;
constraints \eqref{cons:eta-linear-xik} are the so-called submodular inequalities corresponding to $\eta_j \leq 1 - \prod_{(i,k) \in \CS} (1-p_{ij})$. 
For more details of the above \MILP formulation, we refer to \smartcitet{Alvarez-Miranda2019}.

Although formulation \eqref{mgclp-Bin-MILP} involves {a huge number of constraints \eqref{cons:zeta-linear-xik} and \eqref{cons:eta-linear-xik}}, similar to formulation \eqref{mgclp-Int-MILP}, it can be solved by a \BnC framework where constraints \eqref{cons:zeta-linear-xik} and \eqref{cons:eta-linear-xik} are \rev{dynamically} separated.
However, the number of variables in the \MILP formulation \eqref{mgclp-Bin-MILP} of \smartcitet{Alvarez-Miranda2019} is  $ K|\I|+ 2|\J|$. 
This is 
one order of magnitude larger than that in the proposed formulation \eqref{mgclp-Int-MILP}, which is $ 2|\I|+ 2|\J|$. 
As such, it can be expected that 
\rev{formulation \eqref{mgclp-Bin-MILP} is much more computationally demanding than the proposed formulation \eqref{mgclp-Int-MILP}}
in a \BnC framework, especially for problems with a large $K$. 
In \cref{sect:numerical-experiments}, we will further demonstrate this by computational experiments.

	\section{Strong valid inequalities}\label{sect:valid-ineq}

Note that the derivation of the outer-approximation {inequalities} \eqref{cons:prod-outer-approx-explicit} does not exploit the integrality requirements of variables $y$ and other related constraints in \eqref{mgclp-Int-MINLP2}.
This disadvantage, however, may lead to weak outer-approximation inequalities \eqref{cons:prod-outer-approx-explicit} and a weak LP relaxation of problem \eqref{mgclp-Int-MILP}.
To strengthen the \LP relaxation of \eqref{mgclp-Int-MILP} and accelerate the convergence of the proposed \BnC algorithm, in this section, we develop two families of strong valid inequalities for problem \eqref{mgclp-Int-MINLP2} (and thus also for \eqref{mgclp-Int-MILP}) that explicitly consider other constraints (including the integrality constraints on variables $y$ and $z$). 
The two inequalities are developed by considering the set defined by \eqref{cons:zi-yi}--\eqref{cons:z-domain} and \eqref{cons:eta-nonlinear}, which can be presented 
as
\begin{equation}
		\X = \left\{ (\eta, y, z) \in \R \times \Z_{\rev{\ge 0}}^{|\I|} \times \{0,1\}^{|\I|} : 
		 \eta \le 1 - \prod_{i\in \I} (1-p_{i})^{y_i}, \ z_i \le y_i \le K z_i , \ \forall \ i \in \I
		\right\}.
\end{equation}

\subsection{Enhanced outer-approximation inequalities} 
\label{subsect:enhanved-oa-cuts}

Observe that $\X$ can be seen \rev{as} a restriction of \rev{$\U$}, defined in \eqref{Sdef}, that additionally enforces constraints $z_i \leq y_i \leq K z_i$ and $z_i \in \{0,1\}$ for all $i \in \I$. 
Thus, the outer-approximation \rev{inequalities} \eqref{cons:prod-outer-approx-explicit}, valid for \rev{$\U$}, \rev{are} also valid for $\X$.
In this subsection, we further strengthen the outer-approximation inequalities \eqref{cons:prod-outer-approx-explicit} using the constraints $z_i \leq y_i \leq K z_i$ and $(y_i,z_i) \in\mathbb{Z}_{\rev{\ge 0}}\times \{0,1\}$ for $i \in \I$ in $\X$.

We start with the following lemma stating the results on the bounds of the constant $c(y^*)$ and the coefficients $a_i(y^*)$ in inequality \eqref{cons:prod-outer-approx-explicit}, where $c(y^*)$ and $a_i(y^*)$ are defined in \eqref{def:c} and \eqref{def:ai}, respectively.
\begin{lemma}\label{obs:range-ai-c}
	Let $y^\ast \in \Z_{\rev{\ge 0}}^{|\I|}$. Then (i) $0 \le c(y^\ast) \le 1$ holds; and (ii) $a_i(y^\ast) \ge 0$ holds for all $i \in \I_{\Partial}$.
\end{lemma}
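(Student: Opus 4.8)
The plan is to reduce both parts to a single scalar quantity. Write $P := \prod_{i\in\I_{\Partial}}(1-p_i)^{y^*_i}$. Since $0\le p_i<1$ for every $i\in\I_{\Partial}$, each factor $1-p_i$ lies in $(0,1]$, and as $y^*\in\Z_{\ge 0}^{|\I|}$ we get $P\in(0,1]$ (with $P=1$ in the degenerate case $\I_{\Partial}=\varnothing$, by the empty-product convention). Taking logarithms, $\ln P=\sum_{i\in\I_{\Partial}}y^*_i\ln(1-p_i)$, where each $\ln(1-p_i)\le 0$.

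For part (ii), I would simply observe that $a_i(y^*)=-[\ln(1-p_i)]\,P$ is the product of the nonnegative number $-\ln(1-p_i)\ge 0$ and the strictly positive number $P>0$; hence $a_i(y^*)\ge 0$ for all $i\in\I_{\Partial}$.

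For part (i), I would rewrite the constant in \eqref{def:c} using the substitution above as $c(y^*)=1-P+P\ln P=:f(P)$. The upper bound is then immediate: $\ln P\le 0$ and $P>0$ give $P\ln P\le 0$, so $c(y^*)=1-P+P\ln P\le 1-P< 1$. For the lower bound I would show $f$ is non-increasing on $(0,1]$: $f'(P)=-1+\ln P+1=\ln P\le 0$ there, so $f(P)\ge f(1)=1-1+0=0$ for every $P\in(0,1]$, giving $c(y^*)\ge 0$. Combining the two bounds yields $0\le c(y^*)\le 1$.

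The argument is entirely elementary and I do not expect any genuine obstacle. The only points requiring a little care are that the empty-product case is handled correctly ($\I_{\Partial}=\varnothing$ gives $P=1$ and $c(y^*)=0$, still in $[0,1]$), and that $P$ is \emph{strictly} positive rather than merely nonnegative — this is what makes $P\ln P$ well-defined and what lets the monotonicity argument on $(0,1]$ go through, and it uses precisely the hypothesis $p_i<1$ for $i\in\I_{\Partial}$.
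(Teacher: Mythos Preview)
Your proof is correct. Both you and the paper dispatch part~(ii) and the upper bound $c(y^*)\le 1$ directly from the signs of the factors, so those pieces are essentially identical. For the lower bound $c(y^*)\ge 0$, however, the two arguments differ: the paper invokes the concavity of the multivariate function $\hat{\Phi}(y)=1-\prod_{i\in\I_{\Partial}}(1-p_i)^{y_i}$ and applies the first-order (tangent) inequality at $y=\boldsymbol{0}$, which immediately yields $0=\hat{\Phi}(\boldsymbol{0})\le c(y^*)$. You instead collapse everything to the scalar $P=\prod_{i\in\I_{\Partial}}(1-p_i)^{y^*_i}$, write $c(y^*)=f(P)=1-P+P\ln P$, and use the monotonicity $f'(P)=\ln P\le 0$ on $(0,1]$ together with $f(1)=0$. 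Your route is slightly more self-contained and elementary (pure one-variable calculus), whereas the paper's route ties the bound to the outer-approximation interpretation already used elsewhere: $c(y^*)$ is exactly the value at $y=\boldsymbol{0}$ of the supporting hyperplane to $\hat{\Phi}$, so concavity gives the result in one line without any computation.
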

\begin{proof}
Observe that $c(y^\ast) \le 1$ and $a_i(y^\ast) \ge 0$ for $i \in \I_{\Partial}$ 
directly follow from the nonnegativity of $y^\ast$, $0 \leq p_i < 1$ for $i \in \I_{\Partial}$, and the definitions in \eqref{def:c} and \eqref{def:ai}.
In addition, using the concavity of $\hat{\Phi}(y):= 1- \prod_{i \in \I_{\Partial}} (1-p_i)^{y_i}$,
we obtain $\hat{\Phi}(y) \leq  \hat{\Phi}(y^\ast) + \sum_{i\in \I_{\Partial}} \frac{\partial \hat{\Phi}}{\partial y_i} (y^\ast) \cdot  (y_i - y_i^\ast)$.
Setting $y=\bzero$, where $\bzero$ denotes the all-zero vector, 
yields $0 \leq  \hat{\Phi}(y^\ast) + \sum_{i\in \I_{\Partial}} \frac{\partial \hat{\Phi}}{\partial y_i} (y^\ast) \cdot  (0 - y_i^\ast)=c(y^*)$, which completes the proof.
\end{proof}

Using \cref{obs:range-ai-c}, we can derive valid inequalities that are stronger than the outer-approximation inequalities in \eqref{cons:prod-outer-approx-explicit}.
Specifically, we can first strengthen the coefficients of variables $y_i$ that are larger than $1-c(y^*)$ in \eqref{cons:prod-outer-approx-explicit} to $1-c(y^*)$, and obtain the following inequality:
\begin{equation}\label{ineq:coef-str-step1}
	\eta \le c(y^\ast) + \sum_{i \in \I_{\Partial} \backslash \CL} a_i(y^\ast)y_i + (1-c(y^\ast)) \sum_{i \in \I_{\Full}\cup \CL} y_i,
\end{equation}
where $ \CL:= \{ i\in \I_{\Partial}: a_i(y^\ast) \geq 1 - c(y^\ast)\}$.
Indeed, the validity of \eqref{ineq:coef-str-step1} for $\X$ can be obtained by noting that for a given $(\eta, y, z)\in \X$, 
\begin{itemize}
	\item [(i)] if $y_i= 0$ for all $i \in \I_{\Full} \cup \CL$, then \eqref{ineq:coef-str-step1} reduces to \eqref{cons:prod-outer-approx-explicit} and thus \rev{the result} holds at $(\eta,y,z)$ (as \eqref{cons:prod-outer-approx-explicit} is valid for $\X$); and 
	\item [(ii)] if $y_{i_0}\geq 1$ for some $i_0\in \I_{\Full} \cup \CL$, then
	\begin{equation*}
		\begin{aligned}
	& c(y^\ast) + \sum_{i \in \I_{\Partial} \backslash \CL} a_i(y^\ast)y_i + (1-c(y^\ast)) \sum_{i \in \I_{\Full}\cup \CL} y_i \\
	& \qquad \qquad\qquad\quad\overset{(a)}{\geq} c(y^*) + (1-c(y^*)) y_{i_0} \geq  c(y^*) + 1-c(y^*)= 1 \overset{(b)}{\geq} \eta,
		\end{aligned} 
	\end{equation*}
	where (a) follows from $a_{i}(y^*) \geq 0$ for all $i \in \I_{\Partial} \backslash\CL$, $c(y^*) \leq 1$, and $y \in \mathbb{Z}_{\rev{\ge 0}}^{|\I|}$, and (b) follows from $\eta \leq 1 - \prod_{i\in \I} (1-p_{i})^{y_i}\leq 1$.
\end{itemize}

	We can further strengthen inequality \eqref{ineq:coef-str-step1} using constraints $z_i \leq y_i \leq K z_i$ for $i \in \I$, $y \in \Z_{\rev{\ge 0}}^{|\I|}$, and $z \in \{0,1\}^{|\I|}$  in $\X$.
	Indeed, these constraints imply that for all $i \in \I_{\Full}\cup\CL$, $z_i = 1$ holds if and only if $1 \leq y_i \leq K$ holds. 
	Consequently, using a similar discussion as in (i) and (ii), we can show that the following enhanced outer-approximation inequality
	\begin{align}\label{ineq:coef-str-step2}\tag{EOA}
		\eta \le c(y^\ast) 
		+ \sum_{i\in \I_{\Partial} \backslash \CL} a_{i}(y^\ast)y_i 
		+ (1-c(y^\ast))\sum_{i\in \I_{\Full} \cup \CL} z_i
	\end{align}
	 is valid for $\X$.
	Inequality \eqref{ineq:coef-str-step2} is stronger than inequality \eqref{ineq:coef-str-step1} as $y_i \ge z_i$ for all $i \in \I_{\Full} \cup \CL$ and $1-c(y^*) \geq 0$ (implied by \cref{obs:range-ai-c} (i)).
	

	\rev{An illustrative example for inequalities \eqref{ineq:coef-str-step2} to be strictly stronger than outer-approximation inequalities \eqref{cons:prod-outer-approx-explicit} is provided in Appendix \ref*{sect:appendix-EOA-example}.
	In addition, as enhanced versions of the outer-approximation inequalities, their separation can be conducted in a manner similar to that for inequalities \eqref{cons:prod-outer-approx-explicit}; see  \cref{subsec:MILP}.}

	\subsection{Lifted subadditive inequalities}\label{subsect:strong-valid-ineqs}

In this subsection, we further develop another class of strong valid inequalities for $\conv(\X)$. 
In particular, we first derive a class of valid inequalities, called subadditive inequalities, for the restriction of $\X$: 
{\small
\begin{equation}\label{def:X0}
	\W = \left\{(\eta, y, z) \in \R \times \Z_{\rev{\ge 0}}^{|\I_{\Partial}|} \times \{0,1\}^{|\I_{\Partial}|}: \eta \le \Phipartial(y) = 1 - \prod_{i \in \I_{\Partial}}(1-p_i)^{y_i}, \ 
	z_i \le y_i \le K z_i, \ \forall \ i \in \I_{\Partial} \right\}, 
\end{equation}}obtained by setting $ z_i = 0$ and (thus) $y_i=0$ for \rev{all} $i \in \I_{\Full}$.
Then we lift them to obtain lifted  subadditive inequalities for $\conv(\X)$ and provide a mild condition for the derived inequalities to be facet-defining for $\conv(\X)$.
Finally, we discuss the separation procedure for the derived  inequalities.

\subsubsection{Subadditive inequalities for the restriction $\W$}

To derive valid inequalities for $\W$, let us consider the following exponential function:  
\begin{equation}\label{def:q}
	q(\delta) = 1 - e^{\delta}, ~ \text{where}~\delta \in \R_{\rev{\le 0}}.
\end{equation}
Clearly, $q$ is a concave function on $\R_{\rev{\le 0}}$ with $q(0)=0$. 
Thus, it follows from \citet[Theorem 7.2.5]{Hille1996} that
$q$ is subadditive on $\R_{\rev{\le 0}}$.
That is, 
\begin{lemma}\label{lem:q-subadditivity}
	$q(\delta_1 + \delta_2) \le q(\delta_1) + q(\delta_2)$ for all $\delta_1, \delta_2 \in \R_{\rev{\le 0}}$.
\end{lemma}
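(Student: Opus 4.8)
The plan is to reduce the subadditivity claim to a one-line algebraic inequality. First I would substitute the definition $q(\delta) = 1 - e^{\delta}$ into the desired inequality $q(\delta_1+\delta_2) \le q(\delta_1) + q(\delta_2)$; after cancelling and rearranging terms this is equivalent to $e^{\delta_1} + e^{\delta_2} - e^{\delta_1}e^{\delta_2} \le 1$. Setting $a := e^{\delta_1}$ and $b := e^{\delta_2}$, the hypothesis $\delta_1,\delta_2 \in \R_{\le 0}$ is exactly $a,b \in (0,1]$, and the inequality rewrites as $(1-a)(1-b) \ge 0$. Since $a \le 1$ and $b \le 1$ make both factors nonnegative, this holds, which completes the proof.

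An alternative route, which is the one implicitly taken via the citation to \citet[Theorem 7.2.5]{Hille1996}, is to invoke the general fact that a concave function on an interval containing the origin whose value at the origin is nonnegative is subadditive on each side of the origin. To apply it here I would note that $q$ is concave on $\R_{\le 0}$ (indeed $q''(\delta) = -e^{\delta} < 0$) and that $q(0) = 0 \ge 0$. For a self-contained version of this step: given $\delta_1,\delta_2 \in \R_{\le 0}$ not both zero, write each of $\delta_1$ and $\delta_2$ as a convex combination of $\delta_1+\delta_2$ and $0$ with weights $\delta_1/(\delta_1+\delta_2)$, $\delta_2/(\delta_1+\delta_2)$ and their complements; apply concavity together with $q(0)=0$ to get $q(\delta_\ell) \ge \tfrac{\delta_\ell}{\delta_1+\delta_2}\, q(\delta_1+\delta_2)$ for $\ell = 1,2$; and add the two inequalities.

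I do not anticipate any genuine obstacle: the statement is an elementary property of the exponential. The only points requiring a moment's attention are the sign bookkeeping — checking that $\delta_i \le 0$ forces $e^{\delta_i} \le 1$, so that $(1-a)(1-b)$ is a product of two nonnegative factors — and, in the convex-combination variant, disposing of the degenerate case $\delta_1 = \delta_2 = 0$ (where the weights are undefined but the inequality is trivial) as a separate subcase. I would present the factoring argument as the main proof, since it is the most transparent and avoids any appeal to external subadditivity theorems.
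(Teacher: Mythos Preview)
Your proposal is correct. The paper itself does not give an independent proof: it simply observes that $q$ is concave on $\R_{\le 0}$ with $q(0)=0$ and cites \citet[Theorem 7.2.5]{Hille1996} for the subadditivity conclusion --- precisely your ``alternative route''. Your primary argument, reducing to $(1-e^{\delta_1})(1-e^{\delta_2}) \ge 0$, is a genuinely different and more elementary approach: it is fully self-contained, avoids any appeal to an external reference, and exploits the specific exponential form of $q$ rather than only its concavity. The paper's approach has the advantage of being structural (it would apply verbatim to any concave $q$ with $q(0)=0$), while yours is shorter and requires no background.
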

Note that $\Phi_{\text{P}}(y) = 1 - \prod_{i \in \I_{\Partial}}(1-p_i)^{y_i}= q\left(\sum_{i\in \I_{\Partial}}[\ln(1-p_i)] y_i\right)$.
The following lemma provides linear upper approximations for 
$q\left([\ln(1-p_i)]y_i\right)=1 - (1 - p_{i})^{y_{i}}$ with $(y_i, z_i) \in \mathbb{Z}_{\rev{\ge 0}}\times \{0,1\}$ and $z_i \leq y_i \leq Kz_i$, and 
$q\left([\ln(1-p_i)](y_i-z_i)\right)=1 - (1 - p_{i})^{y_{i}-z_i}$ with $(y_i, z_i) \in \mathbb{Z}_{\rev{\ge 0}}\times \{0,1\}$ and $z_i \leq y_i $, 
which will be useful in developing the coming valid inequalities for $\W$. 
The proof is provided in Appendix \ref*{sect:appendix-auxilary-subadditive}.
\begin{lemma}\label{lem:foundations}
	Let $i \in \I_{\Partial}$.
	The following statements hold:
	\begin{itemize}
		\item[(i)] 
		$1 - (1 - p_{i})^{y_{i}} \le h_{i, k}(y_i, z_i)$ holds for any $(y_i, z_i) \in \Z_{\rev{\ge 0}} \times \{0,1\}$ with $z_i \le y_i \le K z_i$, where $k \in [K-1]$ and
		\begin{equation}\label{def:h_k}
			\begin{aligned}
				h_{i,k}(y_i, z_i) & := \left(\left(1 - (1-p_i)^{k+1}\right) - \left(1 - (1-p_i)^{k}\right) \right) (y_i - k z_i) +  \left(1 - (1-p_i)^{k}\right) z_i\\
				& =p_i(1-p_i)^k y_i + \left(1  -(1-p_i)^k(k p_i+1)\right) z_i.
			\end{aligned}
		\end{equation}
		\item[(ii)] $1 - (1 - p_i)^{y_i - z_i} \le p_i (y_i - z_i)$ holds for any $(y_i, z_i) \in \Z_{\rev{\ge 0}} \times \{0,1\}$ with $z_i \le y_i $.
	\end{itemize}
\end{lemma}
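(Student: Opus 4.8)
The plan is to verify both inequalities by reducing each to a one-dimensional convexity/concavity statement and checking finitely many (or appropriately parametrized) cases dictated by the constraint $z_i \le y_i \le K z_i$. Throughout, fix $i \in \I_{\Partial}$ and write $p := p_i \in [0,1)$, so that $\phi(t) := 1 - (1-p)^t = q([\ln(1-p)]t)$ is a concave, nondecreasing function of $t \in \R_{\ge 0}$ with $\phi(0) = 0$ (concavity because $q$ is concave by Lemma \ref{lem:q-subadditivity}'s setup and $t \mapsto [\ln(1-p)]t$ is linear with nonpositive slope, mapping $\R_{\ge 0}$ into $\R_{\le 0}$).

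For part (i): since $z_i \in \{0,1\}$, I would split into the two cases $z_i = 0$ and $z_i = 1$. If $z_i = 0$, then $z_i \le y_i \le K z_i$ forces $y_i = 0$, so the left-hand side is $\phi(0) = 0$ and the right-hand side $h_{i,k}(0,0) = 0$; equality holds. If $z_i = 1$, then $y_i$ ranges over $\{1, 2, \ldots, K\}$ and the claim becomes $\phi(y_i) \le h_{i,k}(y_i, 1)$, where $h_{i,k}(y_i,1)$ is exactly the affine function of $y_i$ passing through the two points $(k, \phi(k))$ and $(k+1, \phi(k+1))$ — this is transparent from the first line of \eqref{def:h_k}, which is written as the secant-line interpolation. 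By concavity of $\phi$, the graph of $\phi$ lies below any of its secant lines, so $\phi(y_i) \le h_{i,k}(y_i,1)$ holds for every $y_i \ge 1$ (in fact for every real $y_i \ge 0$); restricting to integer $y_i \in \{1,\ldots,K\}$ is a fortiori fine. The second line of \eqref{def:h_k} is just algebraic simplification of the secant expression, so I would state it follows by expanding, without dwelling on it. The only mildly delicate point is to make sure the case $z_i=0$ is handled by the $z_i \le y_i$ constraint (collapsing $y_i$ to $0$); I would flag that explicitly.

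For part (ii): again split on $z_i$. If $z_i = 0$, then $y_i - z_i = y_i \ge 0$ and the claim is $\phi(y_i) \le p\, y_i$; this is the tangent-line bound at $t = 0$, since $\phi'(0) = -\ln(1-p) \ge p$... wait — here one must be careful: the tangent at $0$ has slope $-\ln(1-p)$, which is $\ge p$, not $\le p$, so the naive tangent bound gives the wrong direction. Instead I would use the secant line through $(0, \phi(0)) = (0,0)$ and $(1, \phi(1)) = (1, p)$: by concavity, for $t \ge 1$ the graph lies below this secant extended, i.e. $\phi(t) \le p t$; and for $0 \le t \le 1$ we need a separate argument, but since $y_i$ is a nonnegative integer, $y_i = 0$ gives $0 \le 0$ and $y_i \ge 1$ is covered. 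If $z_i = 1$, then $y_i - z_i = y_i - 1 \in \Z_{\ge 0}$ (using $z_i \le y_i$), so the claim is $\phi(y_i - 1) \le p(y_i-1)$, which is the same statement with $y_i - 1$ in place of $y_i$ — already proved. So part (ii) reduces to the single inequality $\phi(m) \le p m$ for $m \in \Z_{\ge 0}$, which follows from concavity of $\phi$ via the secant through $(0,0)$ and $(1,p)$ (equivalently, from Lemma \ref{lem:q-subadditivity}: $\phi(m) = q(m\ln(1-p)) \le m\, q(\ln(1-p)) = m p$ by subadditivity applied $m-1$ times).

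The main obstacle — really the only thing requiring care — is getting the direction of the linear bound right: the tangent line at $0$ over-estimates $\phi$ in slope but the relevant comparison is with an integer-grid secant, so one should phrase everything in terms of secant lines (chords) of the concave function $\phi$ rather than tangent lines, and exploit that the variables are integers constrained by $z_i \le y_i \le K z_i$ to pin down exactly which chord is active. Once the reduction to "concave function lies below its chords" is made, both parts are immediate, and the second (closed-form) line of \eqref{def:h_k} is routine algebra that I would simply assert.
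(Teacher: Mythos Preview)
Your approach matches the paper's: split on $z_i \in \{0,1\}$ and reduce everything to a concavity/secant argument for the one-variable function $\phi(t) = 1 - (1-p)^t$. Part (ii) is fine and essentially identical to the paper's proof.

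Part (i), however, contains a genuine slip. You write that ``by concavity of $\phi$, the graph of $\phi$ lies below any of its secant lines'' and that $\phi(y_i) \le h_{i,k}(y_i,1)$ holds ``in fact for every real $y_i \ge 0$''. Both claims are false: concavity means the graph lies \emph{above} the chord on $[k,k+1]$ and below the extended secant only \emph{outside} that interval. For instance, at $y_i = k + \tfrac12$ one has $\phi(y_i) > h_{i,k}(y_i,1)$. You yourself invoke the correct direction in part (ii) (``for $t \ge 1$ the graph lies below this secant extended''), so the mix-up in (i) is presumably a lapse rather than a misconception, but as written the argument does not go through.

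The fix is exactly what the paper does: since $y_i$ is an integer and $k$, $k+1$ are consecutive integers, either $y_i \in \{k, k+1\}$ (where equality holds) or $y_i \notin [k,k+1]$, and in the latter case concavity gives the graph-below-extended-chord inequality. The paper phrases this via comparison of secant slopes, e.g.\ for $y_i \ge k+1$,
\[
\frac{\phi(y_i) - \phi(k)}{y_i - k} \le \frac{\phi(k+1) - \phi(k)}{(k+1) - k},
\]
which rearranges to the desired bound. Once you restrict explicitly to integer $y_i$ and drop the ``every real $y_i \ge 0$'' parenthetical, your argument becomes correct and coincides with the paper's.
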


Using the subadditivity of function $q$ in \cref{lem:q-subadditivity} and the linear upper approximations of $q([\ln(1-p_i)]y_i)$ and $q([\ln(1-p_i)](y_i - z_i))$ in \cref{lem:foundations}, 
 we can develop a class of subadditive inequalities for $\W$.

\begin{proposition}\label{thm:ineq-C-valid-for-W}
	Let $\C \subseteq \I_{\Partial}$ and $k_i \in [K-1]$ for $i \in \I_{\Partial}\backslash \C$.
	Then the subadditive inequality 
	\begin{equation}\label{ineq:goal-lift-and-proj-C}
		\begin{aligned}
			\eta 
			 \le 1 - \rev{p_{\C}} 
			+ \rev{p_{\C}} \cdot \left( \sum_{i \in \I_{\Partial} \backslash \C} h_{i,k_i}(y_i, z_i) + \sum_{i \in \C} p_i(y_i - z_i)\right)
				\end{aligned}
	\end{equation}
	is valid for $\W$.
    \rev{Here, $p_{\C} := \prod_{i \in \C} (1 - p_i)$ (when $\C=\varnothing$, $p_\C:=1$).}
\end{proposition}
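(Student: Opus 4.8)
The plan is to prove the stronger pointwise inequality that $\Phipartial(y)$ itself is bounded above by the right-hand side of \eqref{ineq:goal-lift-and-proj-C} for every $(y,z)$ feasible for $\W$; since $\eta\le\Phipartial(y)$ on $\W$, this immediately gives the claim. So fix $(\eta,y,z)\in\W$ and work with $\Phipartial(y)=1-\prod_{i\in\I_{\Partial}}(1-p_i)^{y_i}$.

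First I would isolate the factors indexed by $\C$. Using $z_i\le y_i$ (so that $y_i-z_i\ge 0$), write $(1-p_i)^{y_i}=(1-p_i)^{z_i}(1-p_i)^{y_i-z_i}$ for $i\in\C$, obtaining $\prod_{i\in\I_{\Partial}}(1-p_i)^{y_i}=P_0P_1$ with $P_0:=\prod_{i\in\C}(1-p_i)^{z_i}$ and $P_1:=\prod_{i\in\C}(1-p_i)^{y_i-z_i}\prod_{i\in\I_{\Partial}\backslash\C}(1-p_i)^{y_i}$. Since $z_i\in\{0,1\}$ and $0\le p_i<1$, the factor $P_0$ is a product over a subset of $\C$ of numbers in $(0,1]$, hence $P_0\ge p_{\C}$; similarly $0<P_1\le 1$. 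Therefore $\Phipartial(y)=1-P_0P_1\le 1-p_{\C}P_1=(1-p_{\C})+p_{\C}(1-P_1)$. This factoring — rather than a blunt application of subadditivity to the whole exponent, which would only give the weaker bound $(1-p_{\C})+(1-P_1)$ — is what produces the multiplicative $p_{\C}$ in front of the bracket in \eqref{ineq:goal-lift-and-proj-C}, and I expect it to be the single genuinely non-mechanical idea in the proof.

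Next I would estimate $1-P_1$. Writing $1-P_1=q(\ln P_1)$ with $\ln P_1=\sum_{i\in\C}[\ln(1-p_i)](y_i-z_i)+\sum_{i\in\I_{\Partial}\backslash\C}[\ln(1-p_i)]y_i$ (well defined since $1-p_i>0$ for $i\in\I_{\Partial}$, and with every summand $\le 0$), I would extend \cref{lem:q-subadditivity} to finite sums by an immediate induction to get $q(\sum_j\delta_j)\le\sum_j q(\delta_j)$ for $\delta_j\le 0$. Applying this yields $1-P_1\le\sum_{i\in\C}\bigl(1-(1-p_i)^{y_i-z_i}\bigr)+\sum_{i\in\I_{\Partial}\backslash\C}\bigl(1-(1-p_i)^{y_i}\bigr)$. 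Now \cref{lem:foundations}(ii) bounds each term of the first sum by $p_i(y_i-z_i)$ (using $z_i\le y_i$), and \cref{lem:foundations}(i) bounds each term of the second sum by $h_{i,k_i}(y_i,z_i)$ (using $z_i\le y_i\le Kz_i$ and $k_i\in[K-1]$), so $1-P_1\le\sum_{i\in\C}p_i(y_i-z_i)+\sum_{i\in\I_{\Partial}\backslash\C}h_{i,k_i}(y_i,z_i)$.

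Finally I would multiply this estimate by $p_{\C}>0$ (which preserves the inequality) and substitute into $\Phipartial(y)\le(1-p_{\C})+p_{\C}(1-P_1)$ from the first step, obtaining precisely $\eta\le\Phipartial(y)\le 1-p_{\C}+p_{\C}\bigl(\sum_{i\in\I_{\Partial}\backslash\C}h_{i,k_i}(y_i,z_i)+\sum_{i\in\C}p_i(y_i-z_i)\bigr)$, which is \eqref{ineq:goal-lift-and-proj-C}. The only points requiring a word of care are the degenerate cases $\C=\varnothing$ (then $P_0$ is an empty product equal to $1$, $p_{\C}=1$, and the inequality reduces to $\eta\le\sum_{i\in\I_{\Partial}}h_{i,k_i}(y_i,z_i)$) and $\C=\I_{\Partial}$ (then the $h$-sum is empty); both are covered verbatim by the argument above.
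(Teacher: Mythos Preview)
Your proof is correct and follows essentially the same route as the paper: factor out the $\C$-part of the product to produce the $1-p_{\C}+p_{\C}(\cdot)$ structure, then apply subadditivity of $q$ and \cref{lem:foundations} termwise. The only cosmetic difference is that you split $(1-p_i)^{y_i}=(1-p_i)^{z_i}(1-p_i)^{y_i-z_i}$ and bound $P_0\ge p_{\C}$, whereas the paper writes $1-\prod_i(1-p_i)^{y_i}=1-p_{\C}+p_{\C}\bigl(1-\prod_{i\in\C}(1-p_i)^{y_i-1}\prod_{i\notin\C}(1-p_i)^{y_i}\bigr)$ exactly and then uses $z_i\le 1$ to pass from $y_i-1$ to $y_i-z_i$; both maneuvers land on the same intermediate expression $1-p_{\C}+p_{\C}(1-P_1)$.
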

\begin{myproof}
Let $(\eta, y, z) \in \W$.
Then,
\begin{equation}\label{ineq-C-step1}
	\begin{aligned}
		\eta  
		& \le 1 - \prod_{i \in \I_{\Partial}} (1 - p_i)^{y_i} 
		= 1 - \prod_{i \in \C} (1 - p_i) + \prod_{i \in \C} (1 - p_i) - \prod_{i \in \I_{\Partial}} (1 - p_i)^{y_i}\\
			& = 1 - \rev{p_{\C}} + \rev{p_{\C}} \cdot \left(1 - \bigg(\prod_{i \in \I_{\Partial} \backslash \C} (1 - p_i)^{y_i}\bigg) \bigg(\prod_{i \in \C} (1 - p_i)^{y_i - 1}\bigg)\right).
	\end{aligned}
\end{equation}
Observe that
\begin{equation}\label{ineq-C-step2}
	\begin{aligned}
		1 - \bigg(\prod_{i \in \I_{\Partial} \backslash \C} (1 - p_i)^{y_i}\bigg) \cdot \bigg(\prod_{i \in \C} (1 - p_i)^{y_i - 1}\bigg)
		& \overset{\text{(a)}}{\le} 1 - \bigg(\prod_{i \in \I_{\Partial} \backslash \C} (1 - p_i)^{y_i}\bigg) \cdot \bigg(\prod_{i \in \C} (1 - p_i)^{y_i - z_i}\bigg)\\
		& \overset{\text{(b)}}{=} q\left( \sum_{i \in \I_{\Partial} \backslash \C} [\ln(1 - p_i)] y_i + \sum_{i \in \C} [\ln(1 - p_i)] (y_i - z_i)\right)\\
		& \overset{\text{(c)}}{\le} \sum_{i \in \I_{\Partial} \backslash \C} q \left([\ln(1 - p_i)] y_i\right) + \sum_{i \in \C} q\left([\ln(1 - p_i)] (y_i - z_i)\right)\\
		& \overset{\text{(d)}}{\le} \sum_{i \in \I_{\Partial} \backslash\C} h_{i,k_i}(y_i, z_i) 
		+ \sum_{i \in \C} p_i(y_i - z_i),
	\end{aligned}
\end{equation}
where (a) holds as $p_i \in [0,1)$ and $z_i \le 1$ for $i \in \C$, 
(b) follows from the definition of $q$ in \eqref{def:q}, 
(c) follows from the subadditivity of function $q$ in \cref{lem:q-subadditivity}, 
and 
(d) follows from \cref{lem:foundations} and $(y_i,z_i) \in \Z_{\rev{\ge 0}} \times \{0,1\}$ with 
$z_i \le y_i \le K z_i$ for  $i \in \I_{\Partial}$.
Combining \eqref{ineq-C-step1} and \eqref{ineq-C-step2} yields the validity of inequality \eqref{ineq:goal-lift-and-proj-C}.
\end{myproof}

\subsubsection{Lifted subadditive inequalities for $\conv(\X)$}

Next, we derive strong valid inequalities for $\conv(\X)$ from inequalities \eqref{ineq:goal-lift-and-proj-C}.
In particular, we ``lift'' variables $\{z_i\}_{i \in \I_{\Full}}$ into subadditive inequality \eqref{ineq:goal-lift-and-proj-C}, obtaining the following valid inequality for $\conv(\X)$:   
\begin{align}\label{ineq:goal-lift-and-proj-C-lifted}\tag{LS}
		\eta 
		 \le 1 - \rev{p_{\C}} 
		+ \rev{p_{\C}} \cdot \left( \sum_{i \in \I_{\Partial} \backslash \C} h_{i,k_i}(y_i, z_i) + \sum_{i \in \C} p_i(y_i - z_i) + \sum_{i \in \I_{\Full}} z_i \right).
\end{align}
To see the validity, let $(\eta, y, z) \in \X$.
If $z_i = 0$ holds for all $i \in \I_{\Full}$, then inequality \eqref{ineq:goal-lift-and-proj-C-lifted} follows from the validity of inequality \eqref{ineq:goal-lift-and-proj-C} in \cref{thm:ineq-C-valid-for-W}. 
Otherwise, as $z_i \in \{0,1\}$ for $i \in \I_{\Full}$, we must have $\sum_{i \in \I_{\Full}} z_i \ge 1$, and thus 
{\footnotesize
\begin{equation*}
1 - \rev{p_{\C}} 
+ \rev{p_{\C}} \cdot \left( \sum_{i \in \I_{\Partial} \backslash \C} h_{i,k_i}(y_i, z_i) + \sum_{i \in \C} p_i(y_i - z_i) + \sum_{i \in \I_{\Full}} z_i \right)  
\overset{(a)}{\ge} 1 - \rev{p_{\C}} 
+ \rev{p_{\C}} \cdot \left (0 + 0 +  \sum_{i \in \I_{\Full}} z_i \right) \overset{(b)}{\geq} 1 - \rev{p_{\C}} + \rev{p_{\C}} 
= 1 \overset{(c)}{\geq} \eta,
\end{equation*}}%
where (a) follows from $h_{i,k_i}(y_i, z_i) \ge 1 - (1 - p_i)^{y_i} \ge 0$ for $i \in \I_{\Partial} \backslash \C$ (from \cref{lem:foundations} (i) and $(y_i, z_i) \in \Z_{\rev{\ge 0}} \times \{0,1\}$ with $z_i \le y_i \le K z_i$) 
and $p_i(y_i - z_i) \ge 0$ for $i \in \C$ (from $y_i \ge z_i$ and $p_i \geq 0$), 
(b) follows from $\sum_{i \in \I_{\Full}}z_i \geq 1$ {and $p_i \in [0,1)$ for $i \in \C$},
and (c) follows from $\eta \leq  1 - \prod_{i\in \I} (1-p_{i})^{y_i} \leq 1$.

{We demonstrate the strength of the proposed  inequality \eqref{ineq:goal-lift-and-proj-C-lifted} by showing that under a mild condition, \eqref{ineq:goal-lift-and-proj-C-lifted} is facet-defining for $\conv(\X)$.}
\rev{The proof is provided in Appendix \ref*{sect:appendix-LSI-facet-defining}.}
\begin{theorem}\label{thm:strong-valid-X}
	\rev{Let $\C \subseteq \I_{\Partial}$ and $k_i \in [K-1]$ for $i \in \I_{\Partial}\backslash \C$.}
    If $\I_{\Full} \neq \varnothing$, then inequality \eqref{ineq:goal-lift-and-proj-C-lifted} is facet-defining for $\conv(\X)$.
\end{theorem}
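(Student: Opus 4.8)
The plan is to identify the face $F := \conv(\X)\cap H$, where $H$ is the set of points at which \eqref{ineq:goal-lift-and-proj-C-lifted} holds with equality, and to show $\dim F = 2|\I| = \dim\conv(\X)-1$. First I would record that $\conv(\X)$ is full-dimensional: the $2|\I|+2$ points $(0,\bzero,\bzero)$, $(-1,\bzero,\bzero)$ and, for each $i\in\I$, $(0,\be^i,\be^i)$ and $(0,2\be^i,\be^i)$ all lie in $\X$ (using $K\ge 2$) and are affinely independent, so $\dim\conv(\X)=2|\I|+1$. Since \eqref{ineq:goal-lift-and-proj-C-lifted} is valid for $\conv(\X)$, it then suffices to prove that every linear equation $\mu\eta+\nu^{\top}y+\pi^{\top}z=\mu_0$ satisfied by all points of $F$ is a scalar multiple of the equation underlying \eqref{ineq:goal-lift-and-proj-C-lifted}; this forces $\operatorname{aff}(F)$ to coincide with the hyperplane $H$, and hence $\dim F=2|\I|$.

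The engine of the argument is a sufficiently rich supply of points on $F$, in two families. The first family crucially exploits $\I_{\Full}\neq\varnothing$: fixing $i^{*}\in\I_{\Full}$, set $z_{i^{*}}=1$ with $y_{i^{*}}\in\{1,\dots,K\}$, close every other facility of $\I_{\Full}$ and every facility of $\I_{\Partial}\setminus\C$, and take $(y_i,z_i)\in\{(0,0),(1,1)\}$ for $i\in\C$. Since $p_{i^{*}}=1$ and $y_{i^{*}}\ge 1$ we have $\prod_{i\in\I}(1-p_i)^{y_i}=0$, so $\eta=1$ is feasible, and the right-hand side of \eqref{ineq:goal-lift-and-proj-C-lifted} equals $1-p_{\C}+p_{\C}(0+0+1)=1$; hence such points lie on $F$. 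The second family consists of lifts of the tight points of \eqref{ineq:goal-lift-and-proj-C} for $\W$, all with $y_{\I_{\Full}}=z_{\I_{\Full}}=\bzero$: the point $Q_0$ with $(y_i,z_i)=(1,1)$ for $i\in\C$ and everything else zero (so $\eta=1-p_{\C}$); for each $i_0\in\I_{\Partial}\setminus\C$ and each $v\in\{k_{i_0},k_{i_0}+1\}$ the point with $y_{i_0}=v$, $z_{i_0}=1$, $\C$ set to $(1,1)$ and the rest zero; and for each $c_0\in\C$ the point with $y_{c_0}=2$, $z_{c_0}=1$, the remaining members of $\C$ at $(1,1)$ and the rest zero. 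The identities $h_{i,k}(k,1)=1-(1-p_i)^{k}$, $h_{i,k}(k+1,1)=1-(1-p_i)^{k+1}$ and $h_{i,k}(0,0)=0$ from \eqref{def:h_k}, together with $\eta=\Phi(y)$ at these points, show that each lies on $F$ (and $K\ge 2$, $k_{i_0}\le K-1$ guarantee feasibility).

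Plugging these points into $\mu\eta+\nu^{\top}y+\pi^{\top}z=\mu_0$ pins down all coefficients up to the common factor $\mu$. Differencing the two $\eta=1$ points that differ only in $y_{i^{*}}$ gives $\nu_i=0$ for $i\in\I_{\Full}$; turning a single $i\in\C$ from $(0,0)$ to $(1,1)$ in an $\eta=1$ point gives $\nu_i+\pi_i=0$ for $i\in\C$; switching which index of $\I_{\Full}$ is active gives $\pi_i$ constant over $\I_{\Full}$. From the second family, differencing the two points tied to $i_0\in\I_{\Partial}\setminus\C$ gives $\nu_{i_0}=-\mu\,p_{\C}\,p_{i_0}(1-p_{i_0})^{k_{i_0}}$, and comparing one of them with $Q_0$ gives $\pi_{i_0}=-\mu\,p_{\C}\bigl(1-(1-p_{i_0})^{k_{i_0}}(1+k_{i_0}p_{i_0})\bigr)$; comparing the $c_0$-point with $Q_0$ gives $\nu_{c_0}=-\mu\,p_{\C}\,p_{c_0}$ (hence $\pi_{c_0}=\mu\,p_{\C}\,p_{c_0}$); finally $Q_0$ gives $\mu_0=\mu(1-p_{\C})$, after which any $\eta=1$ point gives $\pi_i=-\mu\,p_{\C}$ for $i\in\I_{\Full}$. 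These are exactly $\mu$ times the coefficients of \eqref{ineq:goal-lift-and-proj-C-lifted}, which finishes the proof.

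The step I expect to be the real obstacle is the precise characterization of membership in $F$, i.e. determining exactly when \eqref{ineq:goal-lift-and-proj-C-lifted} is tight. This requires tracking equality throughout the validity chain behind \eqref{ineq:goal-lift-and-proj-C-lifted}: strict subadditivity of $q$ (\cref{lem:q-subadditivity}) permits at most one ``active'' coordinate, and tightness of the secant bound $h_{i,k_i}$ and of $p_i(y_i-z_i)$ (\cref{lem:foundations}) forces that active variable to take specific integer values. This is also where $\C\neq\varnothing$ and the lifting become genuinely necessary: the points with $y_{\I_{\Full}}=z_{\I_{\Full}}=\bzero$ all satisfy $z_i=1$ for $i\in\C$, so on their own they span only a $(2|\I_{\Partial}|-|\C|)$-dimensional affine set, and without the $\eta=1$ points --- which require $\I_{\Full}\neq\varnothing$ --- the face would be too small to be a facet. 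A secondary nuisance is the degenerate case $p_i=0$ for some $i\in\I_{\Partial}$, where $h_{i,k}\equiv 0$ and $y_i,z_i$ are constrained by neither $\Phi$ nor \eqref{ineq:goal-lift-and-proj-C-lifted}; such indices must be treated separately (they contribute their two free dimensions directly), and throughout one must keep verifying $z_i\le y_i\le Kz_i$ and $\eta\le\Phi(y)$ for every point employed.
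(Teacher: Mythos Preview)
Your proposal is correct and follows essentially the same approach as the paper. Both proofs first establish that $\conv(\X)$ is full-dimensional and then rely on the same key families of tight points: the base point $Q_0$ with $(y_i,z_i)=(1,1)$ on $\C$, the pairs at $y_{i_0}\in\{k_{i_0},k_{i_0}+1\}$ for $i_0\in\I_{\Partial}\setminus\C$, the points with $y_{c_0}=2$ for $c_0\in\C$, and the $\eta=1$ points using some $i^{*}\in\I_{\Full}$. The only difference is presentational: the paper lists $2|\I|+1$ tight points in a table and asserts their affine independence, whereas you use the equivalent ``equation technique'' of showing that every hyperplane containing the face is a scalar multiple of \eqref{ineq:goal-lift-and-proj-C-lifted}. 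Your worry about $p_i=0$ is unnecessary, incidentally: in that case your differencing arguments still go through and yield $\nu_i=\pi_i=0$, which matches the (vanishing) coefficients of $y_i,z_i$ in \eqref{ineq:goal-lift-and-proj-C-lifted}.
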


\subsubsection{Separation of the lifted subadditive inequalities \eqref{ineq:goal-lift-and-proj-C-lifted}}

In this subsection, we discuss the algorithm for separating the lifted subadditive {inequalities} \eqref{ineq:goal-lift-and-proj-C-lifted} in a \BnC context.
Given a point $(\eta^*, y^*, z^*) \in \R \times \R_{\rev{\ge 0}}^{|\I|} \times [0,1]^{|\I|}$ with $\eta^* \leq 1$ and $z_i^* \le y_i^* \le K z_i^*$ for $i \in \I$,
the separation problem of inequalities \eqref{ineq:goal-lift-and-proj-C-lifted} asks to find an inequality  violated by $(\eta^*, y^*, z^*)$ or prove that none exists.
Note that due to the exponential numbers of the subsets $\C \subseteq \I_{\Partial}$ and integer vectors $\{k_i\}_{i  \in \I_{\Partial} \backslash \C} \in {[K-1]}^{|\I_\Partial\backslash \C|}$, it is impractical to solve the separation problem by enumeration. 
To address this challenge, we first formulate it as the following optimization problem where  
the right-hand side of inequality \eqref{ineq:goal-lift-and-proj-C-lifted}  at $(\eta^*, y^*, z^*) $ is minimized:
\begin{align}\label{sep}\tag{SEP}
\min_{\substack{\C\subseteq \I_{\Partial},\\ \{k_i\}_{i  \in \I_{\Partial} \backslash \C} \in {[K-1]}^{|\I_\Partial\backslash \C|}}} 1 - \rev{p_{\C}} 
+ \rev{p_{\C}} \cdot \left(
\sum_{i \in \I_{\Partial} \backslash \C} h_{i, k_i}(y^*_i, z^*_i)  + 
\sum_{i \in \C} p_i(y^*_i - z^*_i) + 
\sum_{i \in \I_{\Full}} z^*_i \right).
\end{align}
In order to solve the separation problem \eqref{sep}, we first note that for a fixed subset $\C\subseteq \I_{\Partial}$, determining the $\{ k_i \}_{i \in \I_{\Partial} \backslash \C}$ to minimize the right-hand side of inequality \eqref{ineq:goal-lift-and-proj-C-lifted} at $(\eta^*, y^*, z^*) $ is easy.
\begin{lemma}\label{lem:min-hk}
	Given $i \in \I_{\Partial}$ and $(y_i^*, z_i^*) \in \R_{\rev{\ge 0}} \times [0,1]$  with $z_i^* \le y_i^* \le K z_i^*$, define
	\begin{equation}\label{def:min-hk-index}
		k_i^*:= \left\{ 
		\begin{aligned}
			& \left\lfloor \frac{y_i^*}{z_i^*}\right\rfloor, && ~\text{if}~ z_i^* > 0;\\
			& 1, && ~\text{otherwise}.
		\end{aligned}
		\right.
	\end{equation}
	Then 
	\begin{equation}\label{hopt}
		h_{i, k_i^*}(y_i^*, z_i^*) = \min \left\{h_{i, k}(y_i^*, z_i^*): k \in [K-1]\right\}\geq 0.
	\end{equation}
\end{lemma}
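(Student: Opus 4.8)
The plan is to identify, for a fixed $i\in\I_\Partial$ and fixed $(y_i^*,z_i^*)$, the numbers $\{h_{i,k}(y_i^*,z_i^*)\}_{k\in[K-1]}$ with the values, at a fixed abscissa, of the successive secant lines of the concave function $s\mapsto 1-(1-p_i)^s$, and then exploit the resulting discrete unimodality. First I would dispose of the degenerate case $z_i^*=0$: since $z_i^*\le y_i^*\le Kz_i^*$, this forces $y_i^*=0$, and then $h_{i,k}(0,0)=0$ for every $k\in[K-1]$, so \eqref{hopt} holds with $k_i^*=1$. Henceforth assume $z_i^*>0$, set $t:=y_i^*/z_i^*\in[1,K]$ and $\phi(n):=1-(1-p_i)^n$ for integers $n\ge 0$; note that $\phi$ is concave, as its increments $\phi(n+1)-\phi(n)=p_i(1-p_i)^n$ are nonincreasing in $n$, and that $\phi(n)\ge 0$. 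Using the first expression for $h_{i,k}$ in \eqref{def:h_k} together with $y_i^*=tz_i^*$, I would rewrite
\[
 h_{i,k}(y_i^*,z_i^*)=z_i^*\,L_k(t),\qquad L_k(t):=\big(\phi(k+1)-\phi(k)\big)(t-k)+\phi(k),
\]
so that $L_k$ is the affine interpolant of $\phi$ at the nodes $k$ and $k+1$. Since $z_i^*>0$, minimizing $h_{i,k}(y_i^*,z_i^*)$ over $k\in[K-1]$ is equivalent to minimizing $L_k(t)$ over $k\in[K-1]$.

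The key step is a short computation giving $L_k(t)-L_{k-1}(t)=-p_i^2(1-p_i)^{k-1}(t-k)$, which uses only $\phi(k)-\phi(k-1)=p_i(1-p_i)^{k-1}$. When $p_i\in(0,1)$, this shows that the step from $k-1$ to $k$ strictly decreases $L_\bullet(t)$ when $k<t$, leaves it unchanged when $k=t$, and strictly increases it when $k>t$; hence $k\mapsto L_k(t)$ is unimodal and attains its minimum over the integers $k\ge 1$ at $k=\lfloor t\rfloor$. If $\lfloor t\rfloor\le K-1$ this is exactly the index $k_i^*$ of \eqref{def:min-hk-index}; in the boundary case $\lfloor t\rfloor=K$ (\ie $y_i^*=Kz_i^*$, where $k_i^*$ is to be read as $K-1$), the whole admissible range $\{1,\dots,K-1\}$ lies in the decreasing regime, so the minimum over $[K-1]$ is again attained at $k=K-1$. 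The case $p_i=0$ is immediate, since then $L_k(t)\equiv 0$. This establishes the equality in \eqref{hopt}.

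For the inequality in \eqref{hopt}, I would use that $k_i^*=\lfloor t\rfloor$ implies $t\in[k_i^*,k_i^*+1]$, so $L_{k_i^*}(t)$ is a convex combination of $\phi(k_i^*)$ and $\phi(k_i^*+1)$, both of which are nonnegative because $p_i\in[0,1)$; multiplying by $z_i^*\ge 0$ yields $h_{i,k_i^*}(y_i^*,z_i^*)=z_i^*L_{k_i^*}(t)\ge 0$. I do not expect any serious obstacle: the crux is simply the sign computation for $L_k(t)-L_{k-1}(t)$ and the elementary unimodality it produces, and the only point requiring a little care is the boundary case $y_i^*=Kz_i^*$, where one must observe that the admissible index set $[K-1]$ already sits entirely in the monotone-decreasing part of the sequence.
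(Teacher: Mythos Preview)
Your proposal is correct and essentially the same as the paper's proof: both compute the telescoping difference $h_{i,k}-h_{i,k-1}$ (your $z_i^*(L_k(t)-L_{k-1}(t))=-p_i^2(1-p_i)^{k-1}z_i^*(t-k)$ agrees with the paper's $p_i^2(1-p_i)^{k-1}z_i^*(k-y_i^*/z_i^*)$), deduce unimodality in $k$, and read off the minimizer as $\lfloor y_i^*/z_i^*\rfloor$; your nonnegativity argument via convex combinations of $\phi(k_i^*),\phi(k_i^*+1)$ is the same as the paper's direct use of the first expression in \eqref{def:h_k} with $y_i^*\ge k_i^* z_i^*$. The secant-line framing is a nice bit of extra intuition, and you are in fact slightly more careful than the paper about the boundary case $y_i^*=Kz_i^*$ (where $\lfloor t\rfloor=K\notin[K-1]$ but $h_{i,K}=h_{i,K-1}$), which the paper glosses over.
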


\begin{myproof}
If $z_i^* = 0$, then by $z_i^* \le y_i^* \le K z_i^*$, it follows $y_i^* = 0$, and therefore  $h_{i,k}(y_i^*, z_i^*) = 0$ holds  for all $k \in [K-1]$ and \eqref{hopt} follows.
Otherwise, for any $k \in \{2, \dots, K-1\}$, 
it follows 
\begin{equation}\label{tmp-hk-diff}
	\begin{aligned}
		h_{i, k}(y_i^*, z_i^*) - h_{i, k-1}(y_i^*,z_i^*) 
		& = p_i(1-p_i)^{k} y_i^* + \left(1 - (1-p_i)^{k}(kp_i+1)\right)z_i^*\\
		& \qquad - p_i(1-p_i)^{k-1} y_i^* - \left(1 - (1-p_i)^{k-1}((k-1)p_i+1)\right) z_i^*\\
		& = p_i^2 (1 - p_i)^{k-1} z_i^* \left(k - \frac{y_i^*}{z_i^*}\right).
	\end{aligned}
\end{equation}
Therefore, if $k \le \floor{\frac{y_i^*}{z_i^*}}$, then $h_{i, k}(y_i^*, z_i^*) \le h_{i, k-1}(y_i^*, z_i^*)$; 
otherwise, $k \ge \floor{\frac{y_i^*}{z_i^*}} + 1 $ and $h_{i, k}(y_i^*, z_i^*) \ge h_{i, k-1}(y_i^*, z_i^*)$.
This implies $h_{i, k_i^*}(y_i^*, z_i^*) = \min \left\{h_{i, k}(y_i^*, z_i^*): k \in [K-1]\right\}$, where $k_i^*$ is defined in \eqref{def:min-hk-index}.
Finally, it follows from $y_i^* \geq k_i^* z_i^*$, $0 \leq p_i < 1$, and \eqref{def:h_k} that 
\begin{equation*}
h_{i,k^*_i}(y^*_i, z^*_i) = \left(\left(1 - (1-p_i)^{k_i^*+1}\right) - \left(1 - (1-p_i)^{k_i^*}\right)\right) (y^*_i - k_i^* z_i) +  \left(1 - (1-p_i)^{k_i^*}\right) z^*_i\geq 0. 
\end{equation*}
\end{myproof}

\cref{lem:min-hk} allows us to set $k_i = k_i^*$ (defined in  \eqref{def:min-hk-index}) for $i \in \I_{\Partial} \backslash \C$ in the separation problem \eqref{sep}, obtaining 
\begin{align}\label{prob:sepa-C}\tag{SEP'}
	\min_{\C \subseteq \I_{\Partial}} ~ 
	\nu(\C):=1 - \rev{p_{\C}} + 
	\rev{p_{\C}} \cdot 
	\left(
	\sum_{i \in \I_{\Partial} \backslash \C} h_{i, k_i^*}(y_i^*, z_i^*)
	+ 
	\sum_{i \in \C} p_i (y_i^* - z_i^*)
	+ 
	\sum_{i \in \I_{\Full}} z_i^*
	\right), 
\end{align}
where only the subset $\C\subseteq \I_{\Partial}$ needs to be determined. 
Unfortunately, the following theorem shows the  intrinsic difficulty of finding an optimal solution for problem \eqref{prob:sepa-C}. 
The proof is provided in Appendix \ref*{sect:appendix-nphard}.
\begin{theorem}\label{thm:np-hard}
	Problem \eqref{prob:sepa-C} is NP-hard.
\end{theorem}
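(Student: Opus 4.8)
The plan is to prove NP-hardness of problem~\eqref{prob:sepa-C} by a polynomial-time reduction from \textsc{Subset-Sum}: given positive integers $a_1,\dots,a_n$ and a target $\tau$ with $0<\tau<\Sigma:=\sum_{i=1}^{n}a_i$, decide whether some subset of $\{a_1,\dots,a_n\}$ has sum exactly $\tau$. From such an instance I would build a separation instance with $\I=\I_{\Partial}\cup\I_{\Full}$, $\I_{\Partial}=\{1,\dots,n\}$, $\I_{\Full}$ a single facility, and $K:=2$ (only the standing assumption $K\ge2$ is used). Fix a rational $\lambda:=1/(n\max_i a_i)$ and a small rational multiple $\mu:=\lambda/4$. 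For $i\in\I_{\Partial}$, let $1-p_i$ be a $B$-bit rational approximation of $e^{-\lambda a_i}$ (with $B$ a polynomial in the input size, fixed below), so $p_i\in(0,1)$ has polynomially many bits, and set $z_i^{*}:=y_i^{*}:=\mu a_i/p_i$. An elementary estimate ($1-e^{-x}\ge x(1-e^{-1})$ on $[0,1]$) gives $y_i^{*}\in(0,1]$, and since $y_i^{*}=z_i^{*}>0$ we have $k_i^{*}=\lfloor y_i^{*}/z_i^{*}\rfloor=1$ by~\eqref{def:min-hk-index}; hence by~\eqref{def:h_k} the linear data of facility~$i$ are \emph{exact}, namely $h_{i,1}(y_i^{*},z_i^{*})=p_iy_i^{*}=\mu a_i$ and $p_i(y_i^{*}-z_i^{*})=0$. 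The single full facility gets $z^{*}$-value $\gamma\in[0,1)$ (a rational parameter fixed below) and $y^{*}$-value $\gamma$; the value $\eta^{*}$ is irrelevant since it does not enter~\eqref{prob:sepa-C}.

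With this gadget, $\nu(\C)$ collapses to (essentially) a univariate function of $S:=\sum_{i\in\C}a_i$. The bracketed term in~\eqref{prob:sepa-C} equals $\mu(\Sigma-S)+\gamma$ exactly, while $p_{\C}=\prod_{i\in\C}(1-p_i)$ differs from $e^{-\lambda S}$ by at most $n2^{-B}$ (all factors lie in $[e^{-1},1]$), so
\[
\nu(\C)\;=\;1-p_{\C}\bigl(1-\mu(\Sigma-S)-\gamma\bigr)\;=\;1-e^{-\lambda S}(c_0+\mu S)\,+\,\varepsilon_{\C},\qquad c_0:=1-\gamma-\mu\Sigma,\ \ |\varepsilon_{\C}|\le n2^{-B}.
\]
The function $\psi(S):=e^{-\lambda S}(c_0+\mu S)$ satisfies $\psi'(S)=\lambda\mu\,e^{-\lambda S}\bigl(S^{\star}-S\bigr)$ with $S^{\star}:=1/\lambda-c_0/\mu$, hence is strictly increasing below $S^{\star}$ and strictly decreasing above it. Because $\lambda,\mu$ are rational, I would choose the rational $c_0$ — equivalently $\gamma=1-c_0-\mu\Sigma$, which one checks lies in $[\tfrac12,\tfrac34)$, so the full facility's data are admissible — so that $S^{\star}=\tau$ \emph{exactly}; then $\psi(\tau)>\psi(S)$ for every integer $S\ne\tau$, and by unimodality $\psi(S)\le\psi(\tau)-\Delta$ for all such $S$, where $\Delta:=\min\{\psi(\tau)-\psi(\tau-1),\,\psi(\tau)-\psi(\tau+1)\}>0$. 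Therefore, if the \textsc{Subset-Sum} instance is a yes-instance then $\min_{\C\subseteq\I_{\Partial}}\nu(\C)\le 1-\psi(\tau)+n2^{-B}$, and otherwise $\min_{\C}\nu(\C)\ge 1-\psi(\tau)+\Delta-n2^{-B}$.

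To turn this into a decision, I would bound $\Delta$ from below: integrating $\psi'(S)=\lambda\mu e^{-\lambda S}(\tau-S)$ over $[\tau,\tau+1]$ and over $[\tau-1,\tau]$, and using $\lambda\tau\le\lambda\Sigma\le1$, gives $\Delta\ge \tfrac12\lambda\mu e^{-2}=\Omega(\lambda^2)$, a quantity with polynomially many bits; in particular $\Delta\ge\lambda^2/100=:\Delta_0$. Fixing $B$ so that $n2^{-B}<\Delta_0/3$ — a polynomial bound, since $1/\lambda=n\max_i a_i$ has polynomially many bits — the two cases are separated by a gap of at least $\Delta_0/3$. The decision version of~\eqref{prob:sepa-C} (``given a rational $t$, is $\min_{\C}\nu(\C)\le t$?'') is then NP-hard: output a rational $t:=1-\widetilde\psi+\Delta_0/2$, where $\widetilde\psi$ is a rational approximation of $\psi(\tau)=e^{-\lambda\tau}(c_0+\mu\tau)$ to within $\Delta_0/100$, which is polynomial-time computable because $\lambda\tau\le1$. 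I expect the main obstacle to be exactly this numerical layer: verifying that all gadget data are rationals of polynomial size meeting the domain constraints $0\le p_i<1$, $z_i^{*}\le y_i^{*}\le Kz_i^{*}$, $z_i^{*}\in[0,1]$, and making the perturbation/gap bookkeeping airtight — one cannot use the exact powers $(1-1/(n\max_i a_i))^{a_i}$ (these would blow up the instance size exponentially), so the reduction must live with the bounded-precision error $\varepsilon_{\C}$. The remaining combinatorial core — that $\nu(\C)$ is governed by the single scalar $S$ and that $\psi$ is unimodal in $S$ — is then routine.
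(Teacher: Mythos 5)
Your reduction is sound in its essentials and shares the paper's core idea: encode a subset-sum value $S=\sum_{i\in\C}a_i$ in the exponent of $p_{\C}\approx e^{-\lambda S}$, arrange the data so that $h_{i,k_i^*}(y_i^*,z_i^*)=p_iy_i^*$ is linear in $a_i$ and $p_i(y_i^*-z_i^*)=0$, and observe that $\nu(\C)$ then becomes a strictly unimodal function of the single scalar $S$, minimized exactly when $S$ hits the target. The paper does the same thing but reduces from a normalized partition instance ($a_i<0$, $\sum_ia_i=-2$, target $-1$), takes $\I_{\Full}=\varnothing$, sets $p_i=1-e^{a_i}$ \emph{exactly}, and identifies the peak via the elementary fact that $x+\ln(-x/2)$ is uniquely maximized at $x=-1$; no tunable constant is needed because the normalization places the peak at a fixed value. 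Your two genuine departures are (i) the single full facility with weight $\gamma$, used to slide the peak $S^\star=1/\lambda-c_0/\mu$ onto an arbitrary Subset-Sum target $\tau$, and (ii) the bounded-precision layer: replacing $e^{-\lambda a_i}$ by $B$-bit rationals and proving a $\Delta=\Omega(\lambda^2)$ separation between yes- and no-instances. Point (ii) is not mere pedantry — the paper's instance data $p_i=1-e^{a_i}$ are transcendental for rational $a_i\neq0$, so strictly speaking the paper's map is not a polynomial-time reduction in the Turing model without exactly the kind of approximation-plus-gap argument you supply; your version buys rigor at the cost of the numerical bookkeeping you correctly identify as the remaining work (admissibility of all rational data, the telescoping product error $n2^{-B}$, and a polynomial-time computable rational threshold $t$). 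Your calculations that I checked ($h_{i,1}(y,y)=p_iy$, $\gamma\in(\tfrac12,\tfrac34)$, $y_i^*<1$ via concavity of $1-e^{-x}$, and the lower bound $\Delta\geq\tfrac12\lambda\mu e^{-2}$) are all correct, so I see no gap in the plan, only the acknowledged polishing.
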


Due to the negative result in \cref{thm:np-hard}, in the following, we attempt to design a heuristic algorithm to solve the separation problem of the lifted subadditive inequalities \eqref{ineq:goal-lift-and-proj-C-lifted}.
To proceed, we first characterize the structure of subset $\C$ in violated inequalities \eqref{ineq:goal-lift-and-proj-C-lifted} with $k_i = k_i^*$ for $i \in \I_{\Partial} \backslash \C$. 
\rev{The proof is provided in Appendix \ref*{sect:appendix-sepa-C}.}


\begin{proposition}\label{thm:C-deterministic}
	Let $(\eta^*, y^*, z^*) \in \R \times \R_{\rev{\ge 0}}^{|\I|} \times [0,1]^{|\I|}$ with $\eta^* \leq 1$ and $z_i^* \le y_i^* \le K z_i^*$ for $i \in \I$,
	and $\{k_i^*\}_{i \in \I_{\Partial}}$ be defined in \eqref{def:min-hk-index}.
	Suppose that inequality \eqref{ineq:goal-lift-and-proj-C-lifted} defined by $\C$ and $\{k_i^*\}_{i \in \I_{\Partial}\backslash \C}$ is violated {by} $(\eta^*, y^*, z^*)$ by $\epsilon > 0$.
	The following statements hold. 
	\begin{itemize}
		\item [(i)] If $y_{i_0}^* = z_{i_0}^* = 1$ holds for some $i_0 \in \I_{\Partial} \backslash\C$, then inequality \eqref{ineq:goal-lift-and-proj-C-lifted} defined by $\C\cup \{i_0\}$ and $\{k_i^*\}_{i \in \I_{\Partial}\backslash (\C\cup \{i_0\})}$ is violated {by} $(\eta^*, y^*, z^*)$ by at least $\epsilon$.
		\item [(ii)] If $y_{i_0}^* = z_{i_0}^* = 0$ holds for some $i_0 \in \C$, then inequality \eqref{ineq:goal-lift-and-proj-C-lifted} defined by $\C\backslash \{i_0\}$ and $\{k_i^*\}_{i \in \I_{\Partial}\backslash (\C\backslash \{i_0\})}$ is violated {by} $(\eta^*, y^*, z^*)$ by at least $\epsilon$.
	\end{itemize}
\end{proposition}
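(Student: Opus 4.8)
The plan is to reduce everything to tracking how the right-hand side of \eqref{ineq:goal-lift-and-proj-C-lifted} changes when we move $i_0$ in or out of $\C$, and to show this value does not increase. Recall from \eqref{prob:sepa-C} that $\nu(\C)$ is precisely the right-hand side of \eqref{ineq:goal-lift-and-proj-C-lifted} evaluated at $(\eta^*,y^*,z^*)$ with $k_i=k_i^*$ for $i\in\I_{\Partial}\backslash\C$, so the hypothesis says $\eta^*-\nu(\C)=\epsilon>0$, and for each claim it suffices to prove $\nu(\C\cup\{i_0\})\le\nu(\C)$, respectively $\nu(\C\backslash\{i_0\})\le\nu(\C)$. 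Write $\nu(\C)=1-p_{\C}+p_{\C}\,T(\C)$ with $T(\C):=\sum_{i\in\I_{\Partial}\backslash\C}h_{i,k_i^*}(y_i^*,z_i^*)+\sum_{i\in\C}p_i(y_i^*-z_i^*)+\sum_{i\in\I_{\Full}}z_i^*$. Two preliminary facts will be used repeatedly: $T(\C)\ge 0$, since each $h_{i,k_i^*}(y_i^*,z_i^*)\ge 0$ by \cref{lem:min-hk}, each $p_i(y_i^*-z_i^*)\ge 0$ as $y_i^*\ge z_i^*$, and each $z_i^*\ge 0$; and $p_{\C}>0$ as it is a product of the positive numbers $1-p_i$ ($i\in\C\subseteq\I_{\Partial}$). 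From $\epsilon>0$ we get $\nu(\C)<\eta^*\le 1$, i.e.\ $p_{\C}(1-T(\C))>0$, hence $T(\C)<1$.

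For (i), let $i_0\in\I_{\Partial}\backslash\C$ with $y_{i_0}^*=z_{i_0}^*=1$. Then $k_{i_0}^*=\lfloor y_{i_0}^*/z_{i_0}^*\rfloor=1$, and substituting $k=1$, $y_{i_0}=z_{i_0}=1$ into \eqref{def:h_k} makes the term $(y_i-kz_i)$ vanish, leaving $h_{i_0,1}(1,1)=(1-(1-p_{i_0}))z_{i_0}=p_{i_0}$; also $p_{i_0}(y_{i_0}^*-z_{i_0}^*)=0$. Hence, writing $\C'=\C\cup\{i_0\}$, we have $p_{\C'}=p_{\C}(1-p_{i_0})$ and $T(\C')=T(\C)-p_{i_0}$, since the $h$-sum loses the $i_0$-summand $p_{i_0}$ while the $\C$-sum gains $0$. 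Expanding $\nu(\C')=1-p_{\C'}+p_{\C'}T(\C')$ and simplifying gives $\nu(\C')-\nu(\C)=p_{\C}\,p_{i_0}\bigl(p_{i_0}-T(\C)\bigr)$. Because the $i_0$-summand of $T(\C)$ equals $h_{i_0,1}(1,1)=p_{i_0}$ and all other summands are nonnegative, $T(\C)\ge p_{i_0}$, so $\nu(\C')\le\nu(\C)$; thus the inequality defined by $\C'$ and $\{k_i^*\}_{i\in\I_{\Partial}\backslash\C'}$ is violated by $\eta^*-\nu(\C')\ge\epsilon$.

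For (ii), let $i_0\in\C$ with $y_{i_0}^*=z_{i_0}^*=0$. Then $k_{i_0}^*=1$, and substituting $k=1$, $y_{i_0}=z_{i_0}=0$ into \eqref{def:h_k} gives $h_{i_0,1}(0,0)=0$; also $p_{i_0}(y_{i_0}^*-z_{i_0}^*)=0$. Hence, writing $\C''=\C\backslash\{i_0\}$, we have $p_{\C''}=p_{\C}/(1-p_{i_0})\ge p_{\C}$ and $T(\C'')=T(\C)$, since the $h$-sum gains only $h_{i_0,1}(0,0)=0$ and the $\C$-sum loses only $0$. Therefore $\nu(\C'')-\nu(\C)=(p_{\C}-p_{\C''})\bigl(1-T(\C)\bigr)\le 0$, using $p_{\C}\le p_{\C''}$ and the bound $T(\C)<1$ from the first paragraph; so the inequality defined by $\C''$ and $\{k_i^*\}_{i\in\I_{\Partial}\backslash\C''}$ is violated by at least $\epsilon$.

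The proof is mostly bookkeeping, so there is no deep obstacle; the care is concentrated in the two explicit evaluations $h_{i_0,1}(1,1)=p_{i_0}$ and $h_{i_0,1}(0,0)=0$, which are exactly what make $T$ transform by a controllable amount, and in lining up the two sign facts that close each case: $T(\C)\ge p_{i_0}$ in (i), coming from nonnegativity of the $h$-terms via \cref{lem:min-hk}, and $T(\C)<1$ in (ii), coming from the violation hypothesis together with $p_{\C}>0$. Once these are in place, one only expands $\nu(\C\cup\{i_0\})-\nu(\C)$ and $\nu(\C\backslash\{i_0\})-\nu(\C)$ and reads off the sign.
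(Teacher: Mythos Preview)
Your proof is correct and takes essentially the same approach as the paper's: the paper works with $\omega_{\C}:=1-T(\C)$ so that $\nu(\C)=1-p_{\C}\omega_{\C}$, but otherwise performs the identical evaluations $h_{i_0,1}(1,1)=p_{i_0}$ and $h_{i_0,1}(0,0)=0$, and closes the two cases with the same sign arguments (nonnegativity of the summands for (i), and the violation hypothesis giving $\omega_{\C}>0$, i.e.\ $T(\C)<1$, for (ii)).
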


Letting  $\I_{\Partial}^0 = \{i \in \I_{\Partial}: y_i^* = z_i^* = 0\}$ and  $\I_{\Partial}^1 = \{i \in \I_{\Partial}: y_i^* = z_i^* = 1\}$,
\cref{thm:C-deterministic} implies that in the separation problem of \eqref{ineq:goal-lift-and-proj-C-lifted},
determining whether an element in $\I_{\Partial}^0 \cup \I_{\Partial}^1$ belongs to $\C$ is easy; that is,
in order to find an inequality \eqref{ineq:goal-lift-and-proj-C-lifted} violated by $(\eta^*, y^*, z^*) \in \R \times \R_{\rev{\ge 0}}^{|\I|} \times [0,1]^{|\I|}$ with $\eta^* \leq 1$ and $z_i^* \le y_i^* \le K z_i^*$ for $i \in \I$,
it suffices to consider $\C \subseteq \I_{\Partial}$ satisfying $\I_{\Partial}^1 \subseteq \C$ and $\C\cap \I_{\Partial}^0=\varnothing$. 
Therefore,  we only need to 
determine whether an element in $\I_{\Partial}^r:=\I \backslash (\I_{\Partial}^0 \cup \I_{\Partial}^1)$ belongs to $\C$.
To achieve this and determine a high-quality solution $\C$ for problem \eqref{prob:sepa-C} (as to find a violated inequality \eqref{ineq:goal-lift-and-proj-C-lifted}), we use the following local search procedure.
Specifically, we first initialize $\C:=\I_{\Partial}^1$.  
Then, in each iteration, we attempt to find the best solution from the neighborhood $\mathfrak{N}(\C)= \{ \C' \, : \, \C' = \C \cup \{i\}~\text{for}~i \in \I_{\Partial}^r \backslash \C ~\text{or}~\C' = \C \backslash \{i\}~\text{for}~i \in \I_{\Partial}^r \cap \C\}$, i.e., 
\begin{equation}\label{localopt}
	\C^* \in \argmin \{ \nu(\C')\, : \, \C' \in \mathfrak{N}(\C)\}.
\end{equation}
If $\nu(\C^*) < \nu(\C)$, then a better solution $\C^*$ is found and we update $\C\leftarrow \C^*$. 
\rev{The procedure is repeated until no better solution is found
(the overall separation procedure is summarized in \cref{alg:sepa2} \rev{in Appendix \ref*{sect:appendix-alg-heur-sepa}}).
Note that in each iteration, either a strictly better solution $\C^* \in \mathfrak{N}(\C)$ is found or the algorithm terminates.
This, together with the fact that the number of candidate sets $\C$ is finite, 
implies that the algorithm will terminate after a finite number of iterations.
In addition, the main computational cost in each iteration is to compute $\nu(\C \cup \{i\})$ for $i \in \I_{\Partial}^r \backslash \C$ and  $\nu(\C \backslash \{i\})$ for $i \in \I_{\Partial}^r \cap \C$ with the complexity of $\CO(| \I_{\Partial}^r| |\I|)$, where $\CO(|\I|)$ is taken on evaluating the function value $\nu$.
However, by noting that (i) $\nu(\C)= 1- \rev{p}_\C \rev{\omega}_{\C}$, 
where $\rev{p}_{\C} = \prod_{i \in \C} (1 - p_i)$ and 
\begin{equation*}
	\omega_{\C} = 1  - \left(
	\sum_{i \in \I_{\Partial} \backslash \C} h_{i, k_i^*}(y_i^*, z_i^*)
	+ 
	\sum_{i \in \C} p_i (y_i^* - z_i^*)
	+ 
	\sum_{i \in \I_{\Full}} z_i^*
	\right), 
\end{equation*}
and
(ii) $ \rev{p}_{\C\cup \{i\}}$ and $\rev{\omega}_{\C \cup \{i\}}$ (respectively,  $\rev{p}_{\C\backslash\{i\}}$ and $\rev{\omega}_{\C\backslash \{i\}}$) can be updated from $\rev{p}_\C$ and $\rev{\omega}_{\C}$ in constant time, the iteration complexity of the above algorithm can be improved to $\CO(| \I_{\Partial}^r|)$.}
%

\section{Computational results}
\label{sect:numerical-experiments}

In this section, we present the computational results to demonstrate the efficiency of the proposed \BnC algorithm based on the formulation \eqref{mgclp-Int-MILP}, and the effectiveness of the  proposed enhanced outer-approximation inequalities \eqref{ineq:coef-str-step2} and 
the lifted subadditive inequalities \eqref{ineq:goal-lift-and-proj-C-lifted}.
To do this, we first perform numerical experiments to \rev{compare the computational performance} of our proposed \BnC algorithm \rev{with} the state-of-the-art approach in \smartcitet{Alvarez-Miranda2019}.
Then, we present computational results to evaluate the performance effect of 
the proposed enhanced outer-approximation inequalities \eqref{ineq:coef-str-step2} and 
the lifted subadditive inequalities \eqref{ineq:goal-lift-and-proj-C-lifted} on the overall solution process of the proposed \BnC algorithm.

The proposed \BnC algorithm was implemented in Julia in 1.7.3 using \cplex 20.1.0.
We set parameters of \cplex to run the code with a time limit of 3600 seconds and a relative \MIP gap tolerance of $0\%$.
Except for the ones mentioned above, other parameters of \cplex were set to their default values.
All computational experiments were conducted on a cluster of Intel(R) Xeon(R) Gold 6140 CPU @ 2.30GHz computers.

In our experiments, we used a testbed of $240$ \MPCLP instances, investigated by \smartcitet{Alvarez-Miranda2019}, where $40$ of them were \rev{previously} investigated by \smartcitet{Berman2019}. 
These instances were constructed based on 40 $K$-median instances from the OR-library \citep{Beasley1990} with uniform customer demands and identical numbers of customers and potential facility locations.  
The instances have up to $900$ customers and potential facility locations, and $K$ \rev{ranges} from $5$ to $200$; see \cref{table:Bin-Int-summary}  for more details.
The linear decay function \citep{Berman2003} is used to evaluate the 
probability $p_{ij}$ of customer $j$ being covered by the facilities at location $i$, based on the distance {${D}_{ij}$} between customer $j$ and facility location $i$.
Specifically, given the minimum and maximum coverage distances $r \ge 0$ and $R > r$, the probability $p_{ij}$ of customer $j$ being covered by a facility at location $i$ is computed as:
\begin{equation*}
	p_{ij}= \left\{
	\begin{aligned}
		& 1, && \text{ if } D_{ij} \le r;\\
		& 1 - \frac{D_{ij} - r}{R - r},&& \text{ if } r < D_{ij} < R;\\
		& 0, && \text{ if } D_{ij} \ge R.
	\end{aligned}
	\right. 
\end{equation*}
By definition, a customer $j$ is considered to be (i) fully covered by a facility at location $i$ if $D_{ij} \leq r$, (ii) partially covered by a facility at location $i$ if $ r < D_{ij} < R$, and (iii) not covered by a facility at location $i$ if $D_{ij}\geq R$.
According to \smartcitet{Alvarez-Miranda2019}, 
the values $(r,R)$ are taken from $\{(5, 20), (10, 25)\}$, and
the parameter $\theta$ in \eqref{mgclp-Set-MINLP} is taken from $\{0.2, 0.5, 0.8\}$.
 
\subsection{Comparison with the state-of-the-art approach in \cite{Alvarez-Miranda2019}}\label{subsec:results-bin-int}

We first perform computational experiments to demonstrate the \rev{computational superiority} of our proposed \BnC algorithm over the state-of-the-art \BnC approach in \cite{Alvarez-Miranda2019}. 
In particular, we compare the performance of the following two settings:
\begin{itemize}
	\item \testInt: the proposed \BnC algorithm based on formulation \eqref{mgclp-Int-MILP} (with integer variables $\{y_i\}_{\rev{i \in \I}}$ being used to model the co-location of facilities) \rev{in which the submodular inequalities \eqref{cons:zeta-linear-all}, 
    the enhanced outer-approximation inequalities \eqref{ineq:coef-str-step2}, and the lifted subadditive inequalities \eqref{ineq:goal-lift-and-proj-C-lifted} are separated and added to the nodes of the search tree.}
	\item \testBin: the \BnC algorithm based on formulation \eqref{mgclp-Bin-MILP} of \cite{Alvarez-Miranda2019}  
	(with binary variables $\{x_i^k \}_{\rev{i \in \I, \, k \in [K]}}$ being used to model the co-location of facilities) \rev{in which inequalities \eqref{cons:zeta-linear-xik} and \eqref{cons:eta-linear-xik} are separated and added to the nodes of the search tree.}
\end{itemize}
Note that in order to perform a fair comparison, we re-implemented the \BnC approach of  \cite{Alvarez-Miranda2019} (i.e., \testBin) and report results obtained in our computational environment. 
Also note that in our implementation the primal heuristic algorithm and preprocessing techniques of \cite{Alvarez-Miranda2019} were included to find a high-quality solution and to simplify the problem formulation, respectively.

\begin{table}[t]
\centering
\scriptsize
\caption{Overall performance comparison of settings \testBin and \testInt.}
\label{table:Bin-Int-summary}
\tabcolsep=4.5pt
{
\begin{tabular}{ccccrrrrrrrrrrrrrr}
\toprule
\multicolumn{1}{c}{\multirow{2}{*}{\id}}
 & \multicolumn{1}{c}{\multirow{2}{*}{\V}}
 & \multicolumn{1}{c}{\multirow{2}{*}{\K}}
 & \multicolumn{1}{c}{\multirow{2}{*}{\tblndata}}
 & \multicolumn{6}{c}{\testBin}
 & \multicolumn{6}{c}{\testInt}
 & \multicolumn{2}{c}{\testLiterature}
 \\
\cmidrule(r){5-10}
\cmidrule(r){11-16}
\cmidrule(r){17-18}
 &  &  & 
 &  \tblnvar &  \tblnsol &  \tbltime &  \tblnode &  \tblgap &  \tblrgap
 &  \tblnvar &  \tblnsol &  \tbltime &  \tblnode &  \tblgap &  \tblrgap
 &  \tblnsol &  \tbltime
 \\
\midrule
1      &     100 &       5 & 6      &     334 &       6 &    0.01 &       0 &    0.00 &    0.00 &     330 &       6 &    0.03 &       0 &    0.00 &    0.00 &       6 &    0.01 \\
2      &     100 &      10 & 6      &     424 &       6 &    0.04 &       0 &    0.00 &    0.00 &     327 &       6 &    0.04 &       0 &    0.00 &    0.00 &       6 &    0.03 \\
3      &     100 &      10 & 6      &     502 &       6 &    0.05 &       0 &    0.00 &    0.00 &     338 &       6 &    0.03 &       0 &    0.00 &    0.00 &       6 &    0.05 \\
4      &     100 &      20 & 6      &     441 &       6 &    0.04 &       0 &    0.00 &    0.00 &     337 &       6 &    0.02 &       0 &    0.00 &    0.00 &       6 &    0.05 \\
5      &     100 &      33 & 6      &    1052 &       6 &    0.13 &       0 &    0.00 &    0.25 &     331 &       6 &    0.04 &       0 &    0.00 &    0.00 &       6 &    0.14 \\
6      &     200 &       5 & 6      &     634 &       6 &    0.07 &       0 &    0.00 &    0.00 &     634 &       6 &    0.05 &       0 &    0.00 &    0.00 &       6 &    0.05 \\
7      &     200 &      10 & 6      &    1409 &       6 &    0.28 &       9 &    0.00 &    0.32 &     739 &       6 &    0.12 &       1 &    0.00 &    0.03 &       6 &    0.25 \\
8      &     200 &      20 & 6      &    1958 &       6 &    0.29 &       0 &    0.00 &    0.11 &     713 &       6 &    0.07 &       0 &    0.00 &    0.00 &       6 &    0.17 \\
9      &     200 &      40 & 6      &    3626 &       6 &    1.82 &      97 &    0.00 &    0.66 &     711 &       6 &    0.21 &      14 &    0.00 &    0.59 &       6 &    1.49 \\
10     &     200 &      67 & 6      &    8211 &       6 &   41.83 &    1246 &    0.00 &    1.01 &     728 &       6 &    1.10 &     209 &    0.00 &    0.54 &       6 &   26.21 \\
11     &     300 &       5 & 6      &    1427 &       6 &    0.50 &      11 &    0.00 &    0.23 &    1130 &       6 &    0.16 &       0 &    0.00 &    0.00 &       6 &    0.41 \\
12     &     300 &      10 & 6      &    1575 &       6 &    0.75 &      22 &    0.00 &    0.38 &    1136 &       6 &    0.30 &       2 &    0.00 &    0.02 &       6 &    0.58 \\
13     &     300 &      30 & 6      &    5953 &       6 &   23.11 &     931 &    0.00 &    1.80 &    1142 &       6 &    1.19 &      64 &    0.00 &    0.70 &       6 &   15.30 \\
14     &     300 &      60 & 6      &   11030 &       4 & 1214.26 &   39400 &    0.03 &    0.94 &    1133 &       6 &    2.70 &     162 &    0.00 &    0.32 &       6 &  368.60 \\
15     &     300 &     100 & 6      &   20971 &       4 & 1271.48 &   12295 &    0.28 &    1.28 &    1152 &       6 &  158.13 &   10714 &    0.00 &    0.72 &       4 & 1342.14 \\
16     &     400 &       5 & 6      &    1837 &       6 &    2.63 &      34 &    0.00 &    2.23 &    1552 &       6 &    1.26 &       9 &    0.00 &    0.35 &       6 &    2.78 \\
17     &     400 &      10 & 6      &    3157 &       6 &   21.84 &     710 &    0.00 &    2.00 &    1557 &       6 &    3.14 &      47 &    0.00 &    0.53 &       6 &   38.22 \\
18     &     400 &      40 & 6      &   11377 &       3 & 1846.42 &   46363 &    0.64 &    1.94 &    1557 &       6 &  161.62 &    3305 &    0.00 &    0.68 &       4 & 1692.33 \\
19     &     400 &      80 & 6      &   24630 &       3 & 1859.80 &   15116 &    0.57 &    1.52 &    1568 &       6 &  322.77 &   10615 &    0.00 &    0.88 &       3 & 1959.14 \\
20     &     400 &     133 & 6      &   40440 &       3 & 1468.23 &   10381 &    0.55 &    1.42 &    1548 &       5 &  192.41 &    9342 &    0.06 &    1.08 &       3 & 1514.09 \\
21     &     500 &       5 & 6      &    2351 &       6 &    3.98 &      24 &    0.00 &    0.80 &    1964 &       6 &    0.85 &       0 &    0.00 &    0.01 &       6 &    4.12 \\
22     &     500 &      10 & 6      &    3806 &       6 &   83.75 &    1910 &    0.00 &    3.17 &    1977 &       6 &   20.70 &     156 &    0.00 &    1.01 &       6 &  319.84 \\
23     &     500 &      50 & 6      &   21022 &       2 & 1890.49 &   17459 &    1.45 &    2.34 &    1970 &       4 &  641.81 &    9242 &    0.18 &    1.00 &       2 & 2068.37 \\
24     &     500 &     100 & 6      &   42818 &       1 & 2710.71 &   24952 &    0.89 &    1.36 &    1971 &       4 &  160.20 &    6291 &    0.10 &    0.72 &       2 & 1972.34 \\
25     &     500 &     167 & 6      &   73696 &       3 & 1511.16 &   10264 &    0.40 &    0.99 &    1973 &       5 &   83.80 &    3743 &    0.05 &    0.78 &       3 & 2024.92 \\
26     &     600 &       5 & 6      &    3254 &       6 &  497.41 &    5024 &    0.00 &    4.75 &    2382 &       6 &   12.33 &      65 &    0.00 &    1.08 &       5 &  788.28 \\
27     &     600 &      10 & 6      &    6082 &       2 & 2439.98 &   25822 &    1.42 &    4.21 &    2384 &       6 &  219.00 &    4069 &    0.00 &    1.13 &       2 & 2558.69 \\
28     &     600 &      60 & 6      &   33315 &       0 &     \TL &   41015 &    1.51 &    2.03 &    2380 &       2 &  373.75 &   17106 &    0.22 &    0.89 &       0 &     \TL \\
29     &     600 &     120 & 6      &   66327 &       1 & 2457.91 &   18901 &    0.79 &    1.21 &    2380 &       3 &  188.51 &    9933 &    0.14 &    0.73 &       1 & 2889.76 \\
30     &     600 &     200 & 6      &  109899 &       4 &  175.05 &    1239 &    0.18 &    0.40 &    2377 &       4 &    6.92 &     342 &    0.04 &    0.33 &       3 &  915.25 \\
31     &     700 &       5 & 6      &    3881 &       4 & 1689.71 &    8974 &    1.04 &    5.51 &    2790 &       6 &   94.98 &     271 &    0.00 &    2.07 &       4 & 1831.06 \\
32     &     700 &      10 & 6      &    6798 &       2 & 2447.76 &   18675 &    1.56 &    4.17 &    2790 &       6 &  150.04 &     493 &    0.00 &    0.67 &       2 & 2641.14 \\
33     &     700 &      70 & 6      &   46151 &       0 &     \TL &   20646 &    1.25 &    1.62 &    2789 &       2 &  573.26 &   25414 &    0.14 &    0.67 &       0 &     \TL \\
34     &     700 &     140 & 6      &   93486 &       4 &  245.91 &    1298 &    0.32 &    0.60 &    2778 &       4 &    7.21 &      59 &    0.06 &    0.30 &       0 &     \TL \\
35     &     800 &       5 & 6      &    4748 &       3 & 1929.45 &    4944 &    1.45 &    4.73 &    3191 &       6 &   30.27 &      79 &    0.00 &    1.17 &       2 & 2979.34 \\
36     &     800 &      10 & 6      &    7924 &       0 &     \TL &   20558 &    3.12 &    4.81 &    3191 &       3 & 1599.98 &    7192 &    0.19 &    1.69 &       0 &     \TL \\
37     &     800 &      80 & 6      &   60533 &       0 &     \TL &   21800 &    0.93 &    1.21 &    3188 &       2 &  859.75 &   36339 &    0.12 &    0.55 &       0 &     \TL \\
38     &     900 &       5 & 6      &    5688 &       2 & 2613.44 &    3624 &    3.03 &    5.50 &    3594 &       6 &  269.99 &     619 &    0.00 &    1.66 &       0 &     \TL \\
39     &     900 &      10 & 6      &    9956 &       0 &     \TL &    7578 &    2.17 &    3.05 &    3595 &       3 &  470.38 &    3062 &    0.09 &    0.77 &       0 &     \TL \\
40     &     900 &      90 & 6      &   79479 &       0 &     \TL &   22683 &    0.56 &    0.68 &    3595 &       2 &  856.29 &   40644 &    0.06 &    0.34 &       0 &     \TL \\
\multicolumn{3}{l}{Sol.} & 240 & 	 &     153 &   &   &    &  & 	 &     205 &   &    &  & &     148 & \\
\multicolumn{3}{l}{Aver.} &  & 	 &     &  979.05 &    8526 &    0.60 &    1.73 & 	 &     &  124.17 &    2995 &    0.04 &    0.60 &     & 1131.76 \\
\bottomrule
\end{tabular}

}
\end{table}

\cref{table:Bin-Int-summary} summarizes the computational results of settings \testBin and \testInt, grouped by $(\V, \K)$.
For each row of \cref{table:Bin-Int-summary}, we report 
the number of the potential facility locations (\V) which is equal to the number of customers \rev{$|\J|$}, 
the number of facilities to be opened (\K), and 
the total number of instances per row (\tblndata). 
Under each setting, we report 
the average number of variables in formulations \eqref{mgclp-Int-MILP} or \eqref{mgclp-Bin-MILP} after preprocessing (\tblnvar), 
the number of solved instances (\tblnsol), 
the average CPU time in seconds (\tbltime), 
the average number of nodes (\tblnode), 
the average end gap returned by the \cplex (\tblgap), and 
the average \LP relaxation gap at the root node (\tblrgap), computed as $\frac{o_{\text{root}} - o^*}{o^*} \times 100\%$ where $o_{\text{root}}$ and $o^*$ are the \LP relaxation \rev{bounds} obtained at the root node and the optimal value (or the best incumbent) of the \MPCLP, respectively.
At the end of the table, we report the summarized results for all instances.
For fair comparison purpose, we also include the results of \testBin under column \testLiterature in \cref{table:Bin-Int-summary}, taken from \smartcitet{Alvarez-Miranda2019}.
From the results in \cref{table:Bin-Int-summary}, we observe that our implementation of \testBin is competitive with that in \smartcitet{Alvarez-Miranda2019} (where the differences may come from different computational environments).

From \cref{table:Bin-Int-summary}, we first observe that, as expected, the numbers of variables (\tblnvar) in the proposed formulation \eqref{mgclp-Int-MILP} (after preprocessing) are significantly smaller than those in formulation  \eqref{mgclp-Bin-MILP} (after preprocessing), especially for instances with a large $K$.
Let us take the instances with $\V=400$ and $K = 133$ for an illustration.
The average number of variables in the proposed formulation \eqref{mgclp-Int-MILP} is $1548$. This is one order of magnitude smaller than that in formulation \eqref{mgclp-Bin-MILP}, which is $40440$.
Second, 
the average \LP relaxation gap (\tblrgap) at the root node returned by \testInt is $0.60\%$, while that returned by  \testBin is $1.73\%$.
This shows that the \LP relaxation at the root node under setting \testInt is much stronger than that under  setting \testBin.
This improvement is mainly attributed to the incorporation of strong valid inequalities \eqref{ineq:coef-str-step2} and \eqref{ineq:goal-lift-and-proj-C-lifted} into \testInt, which effectively strengthens the \LP relaxation of  formulation \eqref{mgclp-Int-MILP};
see \cref{subsec:results-techniques} further ahead.
Due to the above two advantages, the proposed \testInt significantly outperforms \testBin.
Overall, \testInt can solve $205$ instances among the $240$ instances to optimality, whereas \testBin can only solve $153$ of them to optimality.
For instances that are solved by either \testBin or \testInt, the average CPU time and number of nodes decrease from $979.05$ seconds and $8526$ to $124.17$ seconds and $2995$, respectively; 
for instances that cannot be solved by at least one setting, the average end gap returned by \testInt is one order of magnitude smaller, reducing from $0.60\%$ to $0.04\%$.
\begin{figure}[tbp]
	\centering
	\subcaptionbox{\label{subfig:bin-int-time}}{\includegraphics[scale=.4]{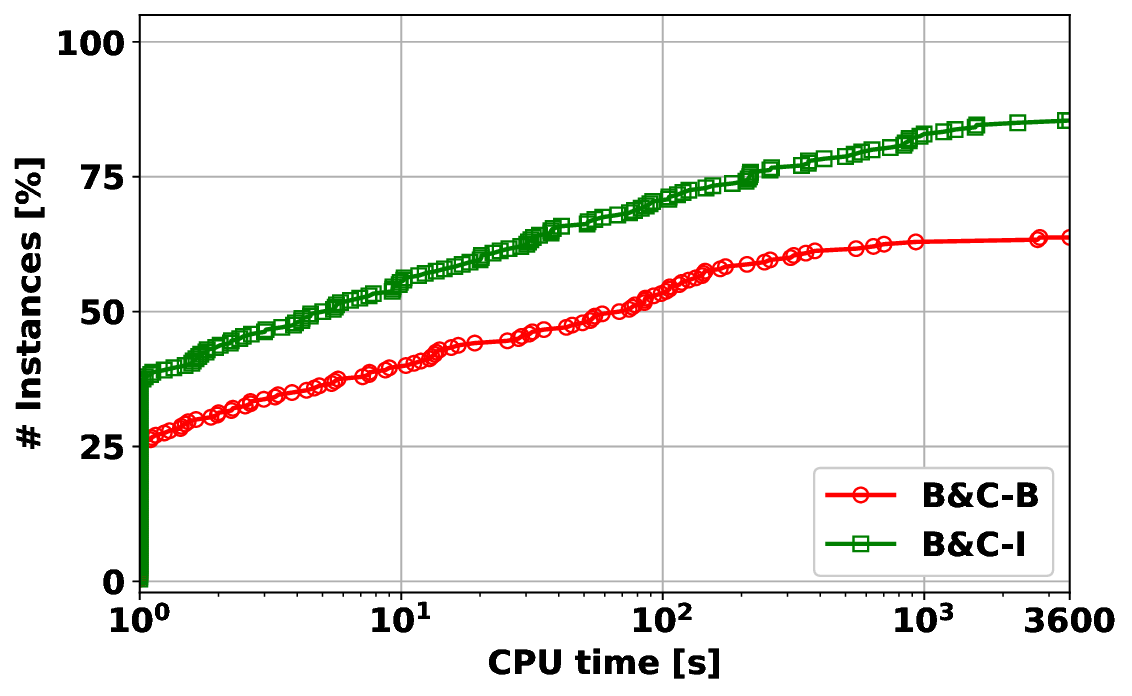}}
	\subcaptionbox{\label{subfig:bin-int-gap}}{\includegraphics[scale=.4]{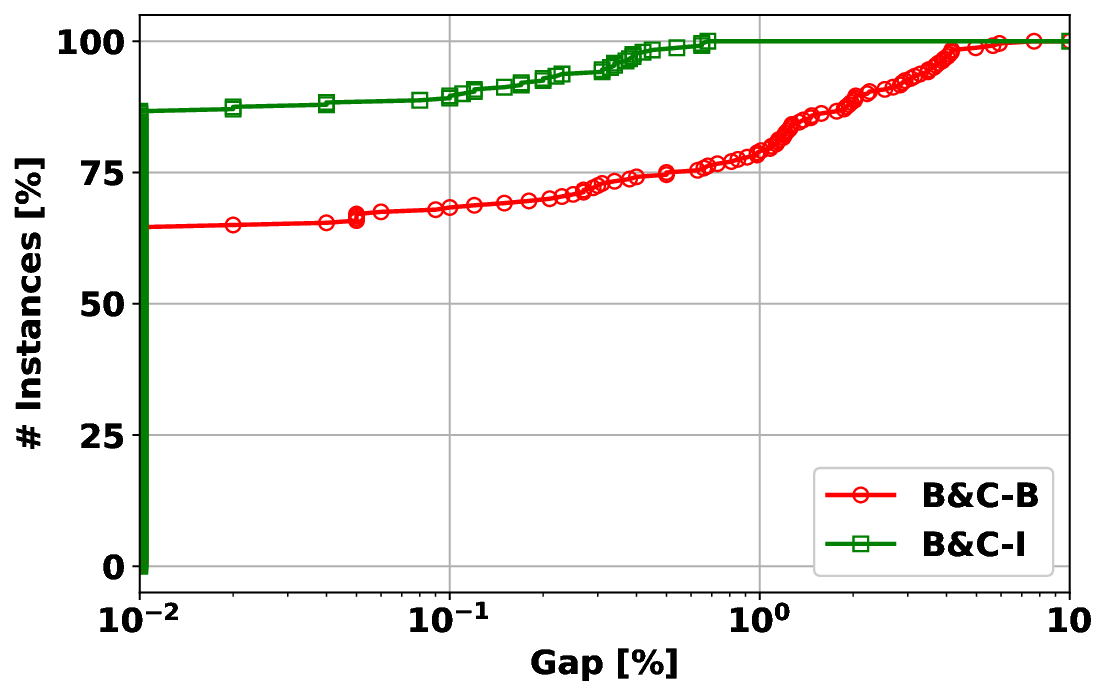}}
				\caption{Performance profiles of 
				 the CPU time and the end gap
				returned by settings \testBin and \testInt.}
	\label{fig:BIN-INT}
\end{figure}
The computational efficiency of the proposed \testInt over \testBin is more intuitively depicted in \cref{subfig:bin-int-time,subfig:bin-int-gap}, where we
plot the performance profiles of 
the CPU time and 
the end gap
under settings \testBin and \testInt, respectively.
In particular, from \cref{subfig:bin-int-time}, about $70\%$ of the instances can be solved by the proposed \testInt within $100$ seconds, whereas only about $55\%$ of the instances are solved by \testBin within the same period of time.
From \cref{subfig:bin-int-gap}, the end gaps returned by the proposed \testInt are all within $1\%$, while for
more than $20\%$ of instances, \testBin \ \rev{returns} an end gap larger than $1\%$.

It is worth noting that $148$ instances among the $240$ instances were solved to optimality by \smartcitet{Alvarez-Miranda2019}, while $205$ instances can be solved to optimality by {the proposed \BnC algorithm}. 
\rev{In \cref{table:unsolved} of Appendix \ref*{sect:newly-solved-instances}, we summarize the results for these $57$ newly solved instances including the optimal values of the instances under column \tblobj.}

\subsection{Performance effect of the enhanced outer-approximation inequalities \eqref{ineq:coef-str-step2} and lifted subadditive inequalities \eqref{ineq:goal-lift-and-proj-C-lifted}}
\label{subsec:results-techniques}

Next, we evaluate the performance effect of the proposed enhanced outer-approximation inequalities \eqref{ineq:coef-str-step2} and the lifted subadditive inequalities \eqref{ineq:goal-lift-and-proj-C-lifted} developed in \cref{sect:valid-ineq} on the proposed  \BnC algorithm based on formulation \eqref{mgclp-Int-MILP}.
To do this, we compare the following settings:
\begin{itemize}
	\item \testBasicInt: the vanilla version of \testInt, where both the enhanced outer-approximation inequalities \eqref{ineq:coef-str-step2} and the lifted subadditive inequalities \eqref{ineq:goal-lift-and-proj-C-lifted} were not implemented;
	\item \testBasicIntStr: setting \testBasicInt  with the enhanced outer-approximation inequalities \eqref{ineq:coef-str-step2};
	\item \testBasicIntVI: setting \testBasicInt with the lifted subadditive inequalities \eqref{ineq:goal-lift-and-proj-C-lifted};
	\item \testBasicIntStrVI: setting \testBasicInt with both the enhanced outer-approximation inequalities \eqref{ineq:coef-str-step2} and the lifted subadditive inequalities  \eqref{ineq:goal-lift-and-proj-C-lifted} (which is equivalent to setting \testInt).
\end{itemize}

\cref{table:VI-summary} summarizes the computational results.
First, we can observe from \cref{table:VI-summary} that the average \LP relaxation gap returned by \testBasicIntStr is smaller than that returned by \testBasicInt, which shows that the enhanced outer-approximation inequalities \eqref{ineq:coef-str-step2} can indeed strengthen the \LP relaxation of formulation \eqref{mgclp-Int-MILP}.
Therefore, equipped with \eqref{ineq:coef-str-step2}, \testBasicIntStr achieves a fairly better overall performance than \testBasicInt.
Overall, \testBasicIntStr can solve $6$ more instances to optimality within the time limit of $3600$ seconds than \testBasicInt; 
and for instances solved by \testBasicInt and \testBasicIntStr, the average CPU time and number of nodes decrease from $406.32$ seconds and $34054$ to $310.74$ seconds and $23937$, respectively.

Compared with the enhanced outer-approximation inequalities \eqref{ineq:coef-str-step2}, 
the lifted subadditive inequalities \eqref{ineq:goal-lift-and-proj-C-lifted} are even much stronger in terms of strengthening the \LP relaxation of formulation \eqref{mgclp-Int-MILP}; equipped with \eqref{ineq:goal-lift-and-proj-C-lifted}, the average \LP relaxation gap at the root node reduces from $1.10\%$ to $0.67\%$.
This is attributed to the favorable theoretical property of inequalities \eqref{ineq:goal-lift-and-proj-C-lifted} in \cref{thm:strong-valid-X}, which demonstrates that inequalities \eqref{ineq:goal-lift-and-proj-C-lifted} can be facet-defining for the substructure $\conv(\X)$ under mild conditions.
As a result, \testBasicIntVI achieves a much better overall performance than \testBasicInt.
Overall,  \testBasicIntVI can solve $13$ more instances to optimality than \testBasicInt within the time limit of $3600$ seconds;  and
the average CPU time and number of nodes returned by \testBasicIntVI are $219.21$ seconds and $6015$, respectively, while those returned by \testBasicInt are $406.32$ seconds and $34054$, respectively.

Note that the combination of the two proposed inequalities \eqref{ineq:coef-str-step2} and \eqref{ineq:goal-lift-and-proj-C-lifted} can even lead to a better \LP relaxation of formulation \eqref{mgclp-Int-MILP}, as demonstrated in column \tblrgap under setting \testBasicIntStrVI.
Therefore, compared with \testBasicInt, \testBasicIntStr, and \testBasicIntVI,  \testBasicIntStrVI with the two proposed inequalities \eqref{ineq:coef-str-step2} and \eqref{ineq:goal-lift-and-proj-C-lifted} can solve more instances to optimality within the time limit of $3600$ seconds, and the average CPU time and number of explored nodes are all much smaller.

\afterpage{
\begin{landscape}
\begin{table}[tbp]
	\caption{Performance comparison of settings \testBasicInt, \testBasicIntStr, \testBasicIntVI, and \testBasicIntStrVI.}
	\label{table:VI-summary}
	\tabcolsep=4.5pt
	\centering
	\scriptsize
	{
				\begin{tabular}{ccccrrrrrrrrrrrrrrrrrrrr}
\toprule
\multicolumn{1}{c}{\multirow{2}{*}{\id}}
 & \multicolumn{1}{c}{\multirow{2}{*}{\V}}
 & \multicolumn{1}{c}{\multirow{2}{*}{\K}}
 & \multicolumn{1}{c}{\multirow{2}{*}{\tblndata}}
 & \multicolumn{5}{c}{\testBasicInt}
 & \multicolumn{5}{c}{\testBasicIntStr}
 & \multicolumn{5}{c}{\testBasicIntVI}
 & \multicolumn{5}{c}{\testBasicIntStrVI}
 \\
\cmidrule(r){5-9}
\cmidrule(r){10-14}
\cmidrule(r){15-19}
\cmidrule(r){20-24}
 &  &  & 
 &  \tblnsol &  \tbltime &  \tblnode &  \tblgap &  \tblrgap
 &  \tblnsol &  \tbltime &  \tblnode &  \tblgap &  \tblrgap
 &  \tblnsol &  \tbltime &  \tblnode &  \tblgap &  \tblrgap
 &  \tblnsol &  \tbltime &  \tblnode &  \tblgap &  \tblrgap
 \\
\midrule
1      &     100 &       5 & 6      &       6 &    0.02 &       0 &    0.00 &    0.00 &       6 &    0.02 &       0 &    0.00 &    0.00 &       6 &    0.02 &       0 &    0.00 &    0.00 &       6 &    0.03 &       0 &    0.00 &    0.00 \\
2      &     100 &      10 & 6      &       6 &    0.03 &       0 &    0.00 &    0.00 &       6 &    0.03 &       0 &    0.00 &    0.00 &       6 &    0.03 &       0 &    0.00 &    0.00 &       6 &    0.04 &       0 &    0.00 &    0.00 \\
3      &     100 &      10 & 6      &       6 &    0.06 &      18 &    0.00 &    0.24 &       6 &    0.06 &      27 &    0.00 &    0.19 &       6 &    0.03 &       0 &    0.00 &    0.00 &       6 &    0.03 &       0 &    0.00 &    0.00 \\
4      &     100 &      20 & 6      &       6 &    0.03 &       0 &    0.00 &    0.00 &       6 &    0.03 &       0 &    0.00 &    0.00 &       6 &    0.03 &       0 &    0.00 &    0.00 &       6 &    0.02 &       0 &    0.00 &    0.00 \\
5      &     100 &      33 & 6      &       6 &    0.16 &     167 &    0.00 &    0.51 &       6 &    0.04 &       5 &    0.00 &    0.51 &       6 &    0.04 &       0 &    0.00 &    0.24 &       6 &    0.04 &       0 &    0.00 &    0.00 \\
6      &     200 &       5 & 6      &       6 &    0.05 &       0 &    0.00 &    0.00 &       6 &    0.05 &       0 &    0.00 &    0.00 &       6 &    0.05 &       0 &    0.00 &    0.00 &       6 &    0.05 &       0 &    0.00 &    0.00 \\
7      &     200 &      10 & 6      &       6 &    0.32 &      82 &    0.00 &    0.53 &       6 &    0.16 &      25 &    0.00 &    0.43 &       6 &    0.12 &       1 &    0.00 &    0.12 &       6 &    0.12 &       1 &    0.00 &    0.03 \\
8      &     200 &      20 & 6      &       6 &    0.30 &      99 &    0.00 &    0.41 &       6 &    0.17 &      45 &    0.00 &    0.37 &       6 &    0.07 &       0 &    0.00 &    0.00 &       6 &    0.07 &       0 &    0.00 &    0.00 \\
9      &     200 &      40 & 6      &       6 &    1.36 &     463 &    0.00 &    0.96 &       6 &    1.27 &     581 &    0.00 &    0.82 &       6 &    0.23 &      17 &    0.00 &    0.38 &       6 &    0.21 &      14 &    0.00 &    0.59 \\
10     &     200 &      67 & 6      &       6 &    5.75 &    2155 &    0.00 &    1.03 &       6 &    3.50 &    1109 &    0.00 &    0.98 &       6 &    1.20 &     199 &    0.00 &    0.55 &       6 &    1.10 &     209 &    0.00 &    0.54 \\
11     &     300 &       5 & 6      &       6 &    0.28 &      18 &    0.00 &    0.39 &       6 &    0.25 &      13 &    0.00 &    0.32 &       6 &    0.16 &       0 &    0.00 &    0.00 &       6 &    0.16 &       0 &    0.00 &    0.00 \\
12     &     300 &      10 & 6      &       6 &    1.21 &     207 &    0.00 &    0.69 &       6 &    0.84 &      95 &    0.00 &    0.82 &       6 &    0.31 &       2 &    0.00 &    0.03 &       6 &    0.30 &       2 &    0.00 &    0.02 \\
13     &     300 &      30 & 6      &       6 &   10.92 &    1865 &    0.00 &    1.68 &       6 &    6.51 &    1143 &    0.00 &    1.54 &       6 &    1.66 &      97 &    0.00 &    0.79 &       6 &    1.19 &      64 &    0.00 &    0.70 \\
14     &     300 &      60 & 6      &       6 &   17.92 &    3362 &    0.00 &    1.15 &       6 &   11.07 &    2018 &    0.00 &    0.73 &       6 &    3.63 &     164 &    0.00 &    0.31 &       6 &    2.70 &     162 &    0.00 &    0.32 \\
15     &     300 &     100 & 6      &       4 & 1248.85 &  298552 &    0.05 &    1.33 &       5 &  923.58 &  177106 &    0.02 &    1.20 &       6 &  335.68 &   23793 &    0.00 &    0.73 &       6 &  158.13 &   10714 &    0.00 &    0.72 \\
16     &     400 &       5 & 6      &       6 &    4.32 &     272 &    0.00 &    2.62 &       6 &    2.09 &     129 &    0.00 &    1.59 &       6 &    1.20 &      11 &    0.00 &    0.44 &       6 &    1.26 &       9 &    0.00 &    0.35 \\
17     &     400 &      10 & 6      &       6 &   36.15 &    2274 &    0.00 &    2.26 &       6 &   17.89 &    1180 &    0.00 &    1.60 &       6 &    5.45 &      73 &    0.00 &    0.55 &       6 &    3.14 &      47 &    0.00 &    0.53 \\
18     &     400 &      40 & 6      &       4 & 1306.30 &  133009 &    0.18 &    1.42 &       4 & 1257.72 &  125900 &    0.14 &    1.19 &       6 &  204.93 &    4245 &    0.00 &    0.74 &       6 &  161.62 &    3305 &    0.00 &    0.68 \\
19     &     400 &      80 & 6      &       4 & 1403.64 &  150551 &    0.18 &    1.54 &       4 & 1233.41 &  114785 &    0.08 &    1.24 &       5 &  989.91 &   36115 &    0.01 &    1.09 &       6 &  322.77 &   10615 &    0.00 &    0.88 \\
20     &     400 &     133 & 6      &       3 & 1459.29 &  170378 &    0.15 &    1.43 &       3 & 1444.90 &  152178 &    0.10 &    1.32 &       5 &  684.58 &   34933 &    0.06 &    1.10 &       5 &  192.41 &    9342 &    0.06 &    1.08 \\
21     &     500 &       5 & 6      &       6 &    1.03 &      17 &    0.00 &    0.31 &       6 &    0.94 &      12 &    0.00 &    0.28 &       6 &    0.90 &       1 &    0.00 &    0.01 &       6 &    0.85 &       0 &    0.00 &    0.01 \\
22     &     500 &      10 & 6      &       6 &  149.55 &    3381 &    0.00 &    2.56 &       6 &   71.53 &    1907 &    0.00 &    1.77 &       6 &   28.16 &     179 &    0.00 &    1.34 &       6 &   20.70 &     156 &    0.00 &    1.01 \\
23     &     500 &      50 & 6      &       2 & 1849.79 &  124580 &    0.45 &    1.73 &       3 & 1202.68 &   51015 &    0.30 &    1.41 &       3 & 1029.58 &   10686 &    0.27 &    1.26 &       4 &  641.81 &    9242 &    0.18 &    1.00 \\
24     &     500 &     100 & 6      &       2 & 1810.83 &  141452 &    0.32 &    1.01 &       3 & 1102.36 &   95127 &    0.13 &    0.87 &       4 &  256.60 &    6840 &    0.23 &    0.79 &       4 &  160.20 &    6291 &    0.10 &    0.72 \\
25     &     500 &     167 & 6      &       4 &  757.22 &   64705 &    0.16 &    1.02 &       4 &  730.45 &   63608 &    0.08 &    0.96 &       5 &  189.27 &   12210 &    0.10 &    0.80 &       5 &   83.80 &    3743 &    0.05 &    0.78 \\
26     &     600 &       5 & 6      &       6 &   22.90 &     362 &    0.00 &    1.88 &       6 &   11.70 &     175 &    0.00 &    1.61 &       6 &   15.71 &      84 &    0.00 &    1.19 &       6 &   12.33 &      65 &    0.00 &    1.08 \\
27     &     600 &      10 & 6      &       6 &  576.45 &   23942 &    0.00 &    1.73 &       6 &  242.23 &    8042 &    0.00 &    1.48 &       6 &  381.48 &   10464 &    0.00 &    1.17 &       6 &  219.00 &    4069 &    0.00 &    1.13 \\
28     &     600 &      60 & 6      &       1 & 1855.16 &  113154 &    0.38 &    1.39 &       2 & 1128.42 &   68980 &    0.28 &    1.16 &       2 & 1167.79 &   68050 &    0.30 &    1.04 &       2 &  373.75 &   17106 &    0.22 &    0.89 \\
29     &     600 &     120 & 6      &       2 & 1336.50 &   92282 &    0.39 &    0.93 &       3 &  145.27 &    9532 &    0.16 &    0.85 &       3 &  550.99 &   26907 &    0.23 &    0.78 &       3 &  188.51 &    9933 &    0.14 &    0.73 \\
30     &     600 &     200 & 6      &       4 &   59.70 &    5586 &    0.10 &    0.42 &       4 &   18.86 &    1963 &    0.07 &    0.40 &       4 &    9.88 &     341 &    0.09 &    0.34 &       4 &    6.92 &     342 &    0.04 &    0.33 \\
31     &     700 &       5 & 6      &       6 &  127.67 &    1933 &    0.00 &    2.50 &       6 &   88.46 &    1245 &    0.00 &    2.44 &       6 &  157.44 &     325 &    0.00 &    2.24 &       6 &   94.98 &     271 &    0.00 &    2.07 \\
32     &     700 &      10 & 6      &       6 &  217.72 &    2610 &    0.00 &    1.20 &       6 &  254.37 &    8149 &    0.00 &    0.88 &       6 &  310.54 &    5837 &    0.00 &    0.77 &       6 &  150.04 &     493 &    0.00 &    0.67 \\
33     &     700 &      70 & 6      &       2 & 1617.30 &   91188 &    0.23 &    0.90 &       2 & 1292.93 &   74869 &    0.21 &    0.81 &       2 & 1223.19 &   60616 &    0.18 &    0.91 &       2 &  573.26 &   25414 &    0.14 &    0.67 \\
34     &     700 &     140 & 6      &       4 &   14.48 &     798 &    0.14 &    0.47 &       4 &    9.89 &     783 &    0.06 &    0.41 &       4 &   25.17 &     799 &    0.10 &    0.35 &       4 &    7.21 &      59 &    0.06 &    0.30 \\
35     &     800 &       5 & 6      &       6 &   28.91 &     183 &    0.00 &    1.31 &       6 &   20.04 &     121 &    0.00 &    1.27 &       6 &   42.12 &      96 &    0.00 &    1.25 &       6 &   30.27 &      79 &    0.00 &    1.17 \\
36     &     800 &      10 & 6      &       1 & 2503.83 &   23397 &    0.40 &    2.19 &       2 & 2400.32 &   24664 &    0.27 &    1.90 &       1 & 2533.96 &   17082 &    0.37 &    1.90 &       3 & 1599.98 &    7192 &    0.19 &    1.69 \\
37     &     800 &      80 & 6      &       2 & 1650.91 &   65238 &    0.19 &    0.68 &       2 & 1442.35 &   66929 &    0.14 &    0.60 &       2 &  335.12 &    5713 &    0.18 &    0.54 &       2 &  859.75 &   36339 &    0.12 &    0.55 \\
38     &     900 &       5 & 6      &       6 &  275.06 &    1925 &    0.00 &    2.11 &       6 &  179.73 &     894 &    0.00 &    1.74 &       6 &  562.94 &    1716 &    0.00 &    1.75 &       6 &  269.99 &     619 &    0.00 &    1.66 \\
39     &     900 &      10 & 6      &       3 &  647.78 &    8812 &    0.11 &    0.90 &       3 &  593.45 &    8231 &    0.11 &    0.88 &       3 &  609.61 &    4665 &    0.12 &    0.87 &       3 &  470.38 &    3062 &    0.09 &    0.77 \\
40     &     900 &      90 & 6      &       2 &  471.00 &   19361 &    0.11 &    0.44 &       2 &  189.55 &    7753 &    0.07 &    0.35 &       2 &  241.46 &    3794 &    0.09 &    0.37 &       2 &  856.29 &   40644 &    0.06 &    0.34 \\
\multicolumn{3}{l}{Sol.} &  240 &     188 &   &   &    &    &     194 &  &&    &     &     201 &  &    &  &   &     205 &   &     &    &    \\
\multicolumn{3}{l}{Aver.} &  &     &  406.32 &   34054 &    0.09 &    1.10 &     &  310.74 &   23937 &    0.06 &    0.92 &     &  219.21 &    6015 &    0.06 &    0.67 &     &  124.17 &    2995 &    0.04 &    0.60 \\
\bottomrule
\end{tabular}

	}
\end{table}
\end{landscape}}

In summary, our results show that both the enhanced outer-approximation inequalities \eqref{ineq:coef-str-step2} and the lifted subadditive inequalities  \eqref{ineq:goal-lift-and-proj-C-lifted}
can effectively strengthen the \LP relaxation of formulation \eqref{mgclp-Int-MILP}, leading to an overall better performance of the proposed \BnC algorithm for the \MPCLP.

	\section{Conclusions}
\label{sect:conclusion}

In this paper, we have investigated the \MPCLP in a joint probabilistic coverage setting, and developed an efficient \rev{\LP-based} \BnC algorithm  for it. 
The proposed \BnC algorithm is based on an \MILP reformulation that incorporates submodular and outer-approximation inequalities, which are separated at the nodes of the search tree.
Moreover, to speed up the convergence of the proposed \BnC algorithm, we developed two families of strong valid inequalities, called enhanced outer-approximation and lifted subadditive inequalities, to strengthen the \LP relaxation of the underlying formulation.
Two key features of the proposed \BnC algorithm, which make it particularly suitable for solving large-scale \MPCLP{s}, are:
(i) it is built on a light-weight mathematical formulation whose number of variables \rev{grows linearly} with the number of facility locations and customers and is one order of magnitude smaller than the underlying formulation in the  state-of-the-art \BnC algorithm of \cite{Alvarez-Miranda2019},
(ii) it is equipped with the two families of strong valid inequalities that can significantly strengthen the \LP relaxation of the formulation, enabling a quick convergence.
Computational experiments on a testbed of $240$ benchmark instances \rev{from the literature} demonstrated that, thanks to the small problem size and the strong \LP relaxation of the underlying formulation, 
the proposed \BnC algorithm significantly outperforms the state-of-the-art approach 
in terms of running time, \rev{the} number of nodes in the search tree, and \rev{the} number of solved instances.
In particular, using the proposed \BnC algorithm, we are able to provide optimal solutions for $57$ previously unsolved benchmark instances within a time limit of one hour.

	\bibliography{shorttitles,mpclp_abbr_noUIA}
	
	\newpage
	\appendix
	\section{Separation of inequalities \eqref{cons:zeta-linear}}
\label{appendix:separation-zeta-linear}

Here, we consider the separation of inequalities \eqref{cons:zeta-linear}. 
That is, for $(\zeta^\ast, z^\ast) \in \R \times [0,1]^{|\I|}$, among the $|\I|+1$ inequalities in \eqref{cons:zeta-linear}, we either find a violated inequality or prove that none exists.
Without loss of generality, we assume that $\I = \{1, \dots, |\I|\}$ and $0 = p_0 \le p_1 \le \cdots \le p_{|\I|}$. 
Let $F_{\ell}(z^\ast)$,  $\ell \in \{0\} \cup \I$, be  the right-hand side of inequalities \eqref{cons:zeta-linear} at $(\zeta^\ast, z^\ast)$, i.e.,  $F_{\ell}(z^\ast) := p_{\ell} + \sum_{i \in \I} (p_i - p_{\ell})^+ z^\ast_i
= p_{\ell} + \sum_{i = \ell + 1}^{|\I|} (p_i - p_{\ell}) z^\ast_i$.
The separation problem is equivalent to determining $\ell^* \in \argmin_{\ell  \in \{0\}\cup \I} F_{\ell}(z^\ast)$.
Observe that for any $\ell \in \{1, \dots, |\I|\}$, it follows: 
\begin{equation*}\label{diff-Frz}
	F_{\ell}(z^\ast) - F_{\ell -1}(z^\ast)
	=p_{\ell}+ \sum_{i = \ell + 1}^{|\I|} (p_i - p_{\ell}) z^\ast_i
	-p_{\ell -1} - \sum_{i = \ell }^{|\I|} (p_i - p_{\ell -1}) z^\ast_i
	= \left(p_{\ell}  - p_{\ell -1}\right) \cdot \left( 1 - \sum_{i = \ell }^{|\I|} z^\ast_{i}\right).
\end{equation*}
Therefore,
\begin{itemize}
	\item[(i)] if 
	\begin{equation}\label{sepineq1}
		\sum_{i =1}^{|\I|} z^\ast_i < 1,
	\end{equation}	
	then we have $F_{0}(z^\ast) \le F_{1}(z^\ast) \le \cdots \le F_{|\I|}(z^\ast)$, 
	and thus $\ell^* = 0$;
	\item[(ii)] otherwise, we have
	$F_{0}(z^\ast) \ge \cdots \ge F_{\ell^*}(z^\ast)$ and 
	$F_{\ell^*}(z^\ast) \le \cdots \le F_{|\I|}(z^\ast)$, where 
	$\ell^* \in \{1, \dots, |\I|\}$ satisfies 
	\begin{equation}\label{sepineq2}
		\sum_{i = \ell^* +1}^{|\I|} z^\ast_{i} < 1 \le \sum_{i = \ell^*}^{|\I|} z^\ast_{i}.
	\end{equation}
\end{itemize}
Since checking whether \eqref{sepineq1} holds or determining $\ell^*$ with \eqref{sepineq2} can all be accomplished with a complexity of $\CO(|\I|)$, the separation of inequalities \eqref{cons:zeta-linear} can  be performed with a  complexity of $\mathcal{O}(|\I|)$.
	\section{Strength of the enhanced outer-approximation inequalities \eqref{ineq:coef-str-step2} over the classic ones \eqref{cons:prod-outer-approx-explicit}}
\label{sect:appendix-EOA-example}

\begin{example}
	Let $\I = \{1,2,3\}$,
	$(p_1, p_2, p_3) = (1 - 1/e, 1 - 1/e^3, 1)$,
	and $y^\ast = (1, 0, 0)$ (where $e$ denotes the base of the natural
	logarithm). From the definitions of $a_i(y^\ast)$ and $c(y^\ast)$ in \eqref{def:c}--\eqref{def:ai}, we have 
	\begin{equation*}
		\begin{aligned}
			& a_1(y^\ast) 
			= - [\ln(1-p_1)] (1-p_1)^{y^\ast_1} (1-p_2)^{y^\ast_2} 
			= - [\ln(1-p_1)] (1-p_1) 
			= 1/e, \\
			& a_2(y^\ast) 
			= - [\ln(1-p_2)] (1-p_1)^{y^\ast_1} (1-p_2)^{y^\ast_2} 
			= - [\ln(1-p_2)] (1-p_1)
			= 3 / e,\\
			& c(y^\ast)
			= 1 - (1-p_1)^{y^\ast_1} (1-p_2)^{y^\ast_2} 
			+ (1-p_1)^{y^\ast_1} (1-p_2)^{y^\ast_2} \left([\ln(1-p_1)] y_1^\ast + [\ln(1-p_2)] y_2^\ast\right)\\
			& \phantom{c(y^\ast)} = 1 - (1-p_1) + (1-p_1) [\ln(1-p_1)] = 1 - 2 / e.
		\end{aligned}
	\end{equation*}
	Thus, inequality \eqref{cons:prod-outer-approx-explicit} defined by $y^\ast$ reduces to
	\begin{equation}\label{oaex}
		\eta 
		\le c(y^\ast) + a_1(y^\ast) y_1 + a_2(y^\ast) y_2  +  y_3 
		= 1 - \frac{2}{e} + \frac{1}{e} \ y_1 + \frac{3}{e} \  y_2 +  y_3.
	\end{equation}
	By $a_1(y^\ast) < 1-c(y^\ast)$, $a_2(y^\ast) > 1-c(y^\ast)$, 
	we obtain $\CL = \{ 2 \}$, and thus
	inequality \eqref{ineq:coef-str-step2} reduces to 
	\begin{equation*}
		\eta \le c(y^\ast) + a_1(y^\ast) y_1 + (1-c(y^\ast))(z_2 + z_3)
		= 1 - \frac{2}{e} + \frac{1}{e} \ y_1 + \frac{2}{e} \ (z_2 + z_3),
	\end{equation*}
	which is stronger than inequality \eqref{oaex}.
\end{example}
	\section{Proof of \cref{lem:foundations}}
\label{sect:appendix-auxilary-subadditive}
\begin{myproof}
	We first prove statement (i).
	If $z_i = 0$, then $y_i = 0$ and thus $1 - (1 - p_i)^{y_i} = 0 =  h_{i, k}(y_i, z_i)$.
	Otherwise, $z_i = 1$ and $1 \leq y_i \leq K$.
	In this case, if $y_i = k$, by \eqref{def:h_k}, $h_{i, k}(y_i, z_i) =1 - (1 - p_i)^{k} = 1 - (1 - p_i)^{y_i} $. 
	Otherwise,  we can rewrite $1 - (1 - p_i)^{y_i} \le h_{i, k}(y_i, z_i)$ as 
	\begin{align}
		\frac{1 - (1-p_i)^{y_i} - (1 - (1-p_i)^{k})}{y_i - k}\le \frac{1 - (1-p_i)^{k+1} - (1 - (1-p_i)^{k})}{k+1 - k}, \quad {\text{if}}~y_i \geq k+1, \label{tmpeq1}\\
		\frac{1 - (1-p_i)^{k} - (1 - (1-p_i)^{y_i})}{k - y_i} \ge \frac{1 - (1-p_i)^{k+1} - (1 - (1-p_i)^{k})}{k+1 - k} ,\quad {\text{if}}~y_i \leq k - 1.\label{tmpeq2}
	\end{align}
	Both \eqref{tmpeq1} and \eqref{tmpeq2} follow from the concavity of function $f(x) = 1 - (1 - p_i)^x$, that is, 
	$\frac{f(a) - f(b)}{a - b} \ge \frac{f(c) - f(d)}{c - d}$ for all $a$, $b$, $c$, $d \in \R_{\rev{\ge 0}}$ with $a > b$, $c > d$, $a \le c$, and $b \le d$.

	Next, we show that statement (ii) holds.
	If $z_i = y_i$, then $1 - (1 - p_i)^{y_i - z_i} = 0 =  p_i (y_i - z_i)$. 
	Otherwise,  $y_i - z_i \ge 1$ must hold (as $(y_i, z_i) \in \Z_{\rev{\ge 0}} \times \{0,1\}$  and $y_i \geq z_i$).
	Combining with the concavity of function $f$, we obtain 
	\begin{equation}\label{tmpeq3}
		\frac{1 - (1 - p_i)^{y_i - z_i} - 0}{y_i - z_i - 0} \le \frac{1 - (1 - p_i)^{1} - 0}{1 - 0},
	\end{equation}
	or equivalently, $1 - (1 - p_i)^{y_i - z_i} \le p_i (y_i - z_i)$, which completes the proof of statement (ii).
\end{myproof}
	\section{Proof of \cref{thm:strong-valid-X}}
\label{sect:appendix-LSI-facet-defining}

\begin{myproof}
	Let $\boldsymbol{0}$ and $\boldsymbol{e}^i$ denote the all-zero and $i$-th unit vectors of dimension $|\I|$.
	Observe that $(\eta,y,z)=(0, \boldsymbol{0}, \boldsymbol{0})$, $(-1, \boldsymbol{0}, \boldsymbol{0})$,
	$\{(p_i, \boldsymbol{e}^i, \boldsymbol{e}^i)\}_{i \in \I}$, and
	$\{(1-(1-p_i)^2, 2\boldsymbol{e}^i, \boldsymbol{e}^i)\}_{i \in \I}$ 
	are $2|\I|+2$ affinely independent points in $\X$, implying that $\conv(\X)$ is full-dimensional.
	Thus, to prove the statement, it suffices to show that there exist $2n+1$ affinely independent points in $\X$ satisfying \eqref{ineq:goal-lift-and-proj-C-lifted} at equality.
	Without loss of generality, we assume that $\C= \{1, \ldots, |\C|\}$, $\I_{\Partial}\backslash \C = \{	 |\C| + 1, \dots,  |\I_{\Partial}|\}$, and $\I_{\Full} = \{|\I_{\Partial}| + 1, \dots, |\I| \}$. 
	Let $i_1 \in \I_{\Full}$ and consider the $2|\I|+1$ points listed in \cref{table:affinely-independent-points}.
	\begin{table}[htbp]
		\caption{The {$2n+1$} affinely independent points in $\X$ satisfying \eqref{ineq:goal-lift-and-proj-C-lifted} at equality.}
		\label{table:affinely-independent-points}
		\centering
		\renewcommand{\arraystretch}{2}
		\begin{tabular}{|l|l|}
			\hline
			Points & Range  \\
			\hline
			$\left(1 - \rev{p_{\C}}, \sum_{i \in \C} \be^i, \sum_{i \in \C} \be^i \right)$
			& --
			\\ \hline
			$\left(1 - (1 - p_{\ell}) \rev{p_{\C}}, \sum_{i \in \C} \be^i +  \be^{\ell}, \sum_{i \in \C} \be^i \right)$
			& $\forall~\ell = 1, \dots, |\C|$ 
			\\ \hline 
			$\left(1, \be^{i_1} + \be^{\ell}, \be^{i_1} + \be^{\ell}\right)$
			& $\forall~\ell = 1, \dots, |\C|$ 
			\\ \hline
			$\left(1 - (1 - p_{\ell})^{k_\ell}\rev{p_{\C}}, \sum_{i \in \C} \be^i + k_\ell \be^{\ell}, \sum_{i \in \C} \be^i + \be^{\ell} \right)$
			& $\forall~\ell = |\C| + 1, \dots, |\I_{\Partial}|$
			\\ \hline
			$\left(1 - (1 - p_{\ell})^{k_\ell+1}\rev{p_{\C}}, \sum_{i \in \C} \be^i + (k_\ell+1) \be^{\ell}, \sum_{i \in \C} \be^i + \be^{\ell} \right)$
			& $\forall~\ell = |\C| + 1, \dots, |\I_{\Partial}|$
			\\ \hline
			$\left(1,  \be^{\ell}, \be^{\ell}\right)$
			& $\forall~\ell = |\I_{\Partial}| + 1, \dots,|\I|$
			\\ \hline
			$\left(1,  2\be^{\ell}, \be^{\ell}\right)$
			& $\forall~\ell = |\I_{\Partial}| + 1, \dots,|\I|$
			\\ \hline
		\end{tabular}
	\end{table}
	By simple computations, points defined in \cref{table:affinely-independent-points} are affinely independent points in $\X$ satisfying \eqref{ineq:goal-lift-and-proj-C-lifted} at equality, which {proves} the statement.
\end{myproof}

\section{Proof of \cref{thm:np-hard}}
\label{sect:appendix-nphard}


\begin{myproof}
We shall prove the NP-hardness of problem \eqref{prob:sepa-C}, or its equivalent maximization version,
\begin{equation}\label{SEP}
	\max_{\C \subseteq \I_{\Partial}} ~ 
	p_\C \cdot 
	\left(
	1 - 
	\sum_{i \in \I_{\Partial} \backslash \C} h_{i, k_i^*}(y_i^*, z_i^*)
	- 
	\sum_{i \in \C} p_i (y_i^* - z_i^*)
	-  
	\sum_{i \in \I_{\Full}} z_i^*
	\right),
\end{equation}
by establishing a polynomial time reduction from the partition problem, which is NP-complete
\ifarxiv
(Garey and Johnson, 1979).
\else
\citep{Garey1979}.
\fi
First, we introduce the partition problem: 
given a finite set $[n] = \{1, 2, \dots, n\}$ and a value $a_i \in \mathbb{Q}_{\rev{\le 0}}$ for the $i$-th element with $\sum_{i \in [n]} a_i = -2$, 
does there exist a subset $\C \subseteq [n]$ such that $\sum_{i \in \C} a_i = -1$?
Without loss of generality, we assume $a_i \in (-1, 0)$ for all $i \in [n]$.

Given any instance of the partition problem, 
we construct an instance of problem \eqref{SEP}
by setting $\I_{\Full} = \varnothing$, $\I_{\Partial} = [n]$,  
$p_i = 1 - e^{a_i} \in (0,1)$, and
$y_i^* = z_i^* = - \frac{a_i}{2 (1 - e^{a_i})}  \in (0,1)$ for $i \in [n]$\footnote{Observe that $(\min_{x \in [-1, 0]} 2 (1 - e^x) + x) = 0$, where the only minimum is achieved at $x=0$. Thus, {$ - \frac{a_i}{2 (1 - e^{a_i})}<1$} holds for  $a_i \in (-1, 0)$.
}.
By definition, $k_i^*$ defined in \eqref{def:min-hk-index} reduces to $k_i^* = \lfloor y_i^* / z_i^* \rfloor = 1$ where $i \in [n]$.
Thus, it follows from  \eqref{def:h_k} that
$$h_{i, k_i^*}(y_i^*, z_i^*) = p_i z_i^* = - \frac{a_i}{2}, \ \forall \ i \in \I_{\Partial} \backslash \C.$$
Therefore, \eqref{SEP} reduces to 
\begin{equation}\label{SEP2}
	\max_{\C \subseteq \I_{\Partial}} ~ 
	\prod_{i \in \C} e^{a_i} \cdot \left(
	1 + \frac{1}{2} \sum_{i \in \I_{\Partial} \backslash \C} a_i
	\right)
	\overset{(a)}{=} 	\max_{\C \subseteq \I_{\Partial}} ~ \prod_{i \in \C} e^{a_i} \cdot \left(
	-  \frac{1}{2} \sum_{i \in \C} a_i
	\right).
\end{equation}
where (a) follows from $\sum_{i \in [n]} a_i = \sum_{i \in \I_{\Partial} \backslash \C} a_i + \sum_{i \in \C} a_i  = -2$.
To solve problem \eqref{SEP2}, we consider the following two cases. 
\begin{itemize}
	\item [(i)] If $\sum_{i \in \C} a_i = 0$, then the objective value of \eqref{SEP2} is equal to $0$.
	\item [(ii)] Otherwise, by applying the logarithm on the objective function, we obtain 
	an equivalent problem of \eqref{SEP2}: 
	\begin{equation}\label{SEP3}
		\max_{\C \subseteq \I_{\Partial}, \, \C \neq \varnothing} ~ 
		\sum_{i \in \C} a_i + \ln \left(
		-  \frac{1}{2} \sum_{i \in \C} a_i
		\right).
	\end{equation}
	It is simple to check that 
	$$\left(\max_{x \in [-2,0)}  x + \ln\left(-\frac{x}{2}\right)\right) = -1 + \ln \frac{1}{2},$$
	where the only maximum point is arrived at $x = -1$.
	This, together with $\sum_{i \in \C} a_i \in [-2,0)$, implies that the optimal value of problem \eqref{SEP3} is equal to $-1+ \ln \frac{1}{2}$ if and only if $\sum_{i \in \C} a_i =-1$.
\end{itemize}
\noindent
Combining the above two cases, we can conclude that the optimal value of \eqref{SEP2} is   $e^{-1 + \ln \frac{1}{2}} > 0$ 
if and only if $\sum_{i \in \C} a_i = -1$, or equivalently, the answer to the partition problem is yes.
Since the above transformation can be done in polynomial time and the partition problem is NP-complete, we conclude that problem \eqref{SEP} is NP-hard.
\end{myproof}

	\section{Proof of \cref{thm:C-deterministic}}
\label{sect:appendix-sepa-C}
\begin{myproof} Observe that the objective function {$\nu(\C)$} of problem \eqref{prob:sepa-C} (i.e., the right-hand side of \eqref{ineq:goal-lift-and-proj-C-lifted}) at $(\eta^*, y^*, z^*)$ can be written as $\nu(\C) = 1 - \rev{p}_{\C}  \rev{\omega}_{\C}$, where 
\begin{align}\label{def:B_C}
	& \rev{\omega}_{\C} = 1  - \left(
	\sum_{i \in \I_{\Partial} \backslash \C} h_{i, k_i^*}(y_i^*, z_i^*)
	+ 
	\sum_{i \in \C} p_i (y_i^* - z_i^*)
	+ 
	\sum_{i \in \I_{\Full}} z_i^*
	\right).
\end{align}
	We first prove statement (i).
	By $y_{i_0}^* = z_{i_0}^* = 1$, it follows $p_{i_0} (y_{i_0}^* - z_{i_0}^*) = 0$ and $k_{i_0}^* = 1$.
	As a result,
	$h_{i_0, k_{i_0}^*}(y_{i_0}^*, z_{i_0}^*) = h_{i_0, 1}(y_{i_0}^*, z_{i_0}^*) = p_{i_0}(1-p_{i_0}) y_{i_0}^* + (1  -(1-p_{i_0})(p_{i_0}+1)) z_{i_0}^* = p_{i_0}$,  
	$\rev{\omega}_{\C \cup \{i_0\}} = \rev{\omega}_{\C} + h_{i_0, k_{i_0}^*}(y_{i_0}^*, z_{i_0}^*) - p_{i_0} (y_{i_0}^* - z_{i_0}^*) = \rev{\omega}_{\C} + p_{i_0}$, and 
	\begin{equation}\label{tmp}
		\rev{\omega}_{\C} \overset{(a)}{\leq}  1 - h_{i_0, k_{i_0}^*}(y_{i_0}^*, z_{i_0}^*) = 1-p_{i_0}, 
	\end{equation}
	where (a) follows from $0 \leq z_i^* \leq y_i^*\leq Kz_i^*$ for $i \in \I$ and  $h_{i, k_i^*}(y_i^*, z_i^*)\geq 0$ for $i \in \I_{\Partial}\backslash (\C\cup\{i_0\})$ (by \cref{lem:min-hk}). 
	Hence,  
	\begin{equation*}
		\begin{aligned}
			\nu(\C \cup \{i_0\})-  \nu(\C)
			& {=}  
			- \rev{p}_{\C \cup \{i_0\}} \rev{\omega}_{\C \cup \{i_0\}} + \rev{p}_{\C}  \rev{\omega}_{\C}\\
			& \overset{(a)}{=} - \rev{p}_{\C}  (1 - p_{i_0})  \left(\rev{\omega}_{\C} + p_{i_0} \right) + \rev{p}_{\C}  \rev{\omega}_{\C}\\
			& = - \rev{p}_{\C}  p_{i_0}  (1 - p_{i_0} - \rev{\omega}_{\C}) \overset{\text{(b)}}{\le} 0, 
		\end{aligned}
	\end{equation*}
	where (a) follows from $p_{\C\cup \{i_0\}}= \prod_{i \in \C\cup \{i_0\}} (1-p_i)= (1-p_{i_0})\prod_{i \in \C} (1-p_i)=(1-p_{i_0})p_{\C}$,
	(b) follows from $\rev{p}_{\C} > 0$, {$p_{i_0} \geq 0$} (as $i_0 \in \I_{\Partial}$), and \eqref{tmp}.
	This shows statement (i).
	
	Next, we prove statement (ii).
	As inequality \eqref{ineq:goal-lift-and-proj-C-lifted} defined by $\C$ and $\{k_i^*\}_{i \in \I_{\Partial}\backslash \C}$ is violated by $(\eta^*, y^*, z^*)$, it must follow $\rev{\nu}(\C) = 1 - \rev{p}_{\C}  \rev{\omega}_{\C} < \eta^* \leq 1$. 
	This, together with $\rev{p}_\C > 0$, implies $\rev{\omega}_{\C}> 0$.
	By $y_{i_0}^* = z_{i_0}^* = 0$, we have $p_{i_0} (y_{i_0}^* - z_{i_0}^*) = 0$, $k_{i_0}^* = 1$, and
	$h_{i_0, k_{i_0}^*}(y_{i_0}^*, z_{i_0}^*) = h_{i_0, 1}(y_{i_0}^*, z_{i_0}^*) = p_{i_0}(1-p_{i_0}) y_{i_0}^* + (1  -(1-p_{i_0})(p_{i_0}+1)) z_{i_0}^* = 0$.
	Thus, 
	$\rev{\omega}_{\C} = \rev{\omega}_{\C\backslash \{i_0\}} + h_{i_0, k_{i_0}^*}(y_{i_0}^*, z_{i_0}^*) - p_{i_0} (y_{i_0}^* - z_{i_0}^*) = \rev{\omega}_{\C \backslash \{i_0\}}$ and
	\begin{align*}
		\nu(\C)-  \nu(\C \backslash \{i_0\})
		& =  
		- \rev{p}_{\C}  \rev{\omega}_{\C} + \rev{p}_{\C \backslash \{i_0\}}  \rev{\omega}_{\C \backslash \{i_0\}}\\
		& \overset{(a)}{=} 
		- \rev{p}_{\C \backslash \{i_0\}}  (1 - p_{i_0})   \rev{\omega}_{\C} +  \rev{p}_{\C \backslash \{i_0\}}  \rev{\omega}_{\C} \\
		& = p_{i_0}  \rev{p}_{\C \backslash \{i_0\}}   \rev{\omega}_{\C} \overset{\text{(a)}}{\ge} 0, 
	\end{align*}
	where (a) follows from $p_{\C}= \prod_{i \in \C} (1-p_i)= (1-p_{i_0})\prod_{i \in \C\backslash \{i_0\}} (1-p_i)=(1-p_{i_0})p_{\C\backslash \{i_0\}}$, 
    and (b) holds as $\rev{p}_{\C\backslash \{i_0\}} > 0$, $p_{i_0} \geq 0$ (as $i_0 \in \I_{\Partial}$), and $ \rev{\omega}_{\C} >0$.
	Therefore, statement (ii) holds.
\end{myproof}
	\section{Heuristic separation procedure for the lifted subadditive inequalities \eqref{ineq:goal-lift-and-proj-C-lifted}}
\label{sect:appendix-alg-heur-sepa}

\begin{algorithm}[H]
	\caption{A local search algorithm for separating the lifted subadditive inequalities}
	\label{alg:sepa2}
	\KwIn{Point $(\eta^*, y^*, z^*) \in \R \times \R_{\rev{\ge 0}}^{|\I|} \times [0,1]^{|\I|}$ with $\eta^* \leq 1$ and $z_i^* \le y_i^* \le K z_i^*$ for $i \in \I$.}
	\For{$i \in \I_{\Partial}$}{Set $k_i^* \leftarrow \lfloor y_i^* / z_i^*\rfloor$\;}
	Set $\C \leftarrow \I_{\Partial}^1$ and compute $\nu(\C)$\; 
	\Repeat{no better solution is found}
	{Compute $\C^* \in \argmin \{ \rev{\nu}(\C')\, : \, \C' \in \mathfrak{N}(\C)\}$\;
		\If{$ \nu(\C^*) <\nu(\C)$}{
			Set $\C \leftarrow \C^*$ and mark that a better solution is found\; 
		}
	}
	\eIf{$ \nu(\C) <\eta^*$}
	{
		{\bf Output} the violated lifted subadditive inequality \eqref{ineq:goal-lift-and-proj-C-lifted} defined by $\C$ and $\{k_i^*\}_{i \in \I_{\Partial} \backslash \C}$\;
	}
	{
		\rev{The local search algorithm finds no violated lifted subadditive inequality \eqref{ineq:goal-lift-and-proj-C-lifted}\;}
	}
\end{algorithm}

	\section{Computational results of the newly solved instances}
\label{sect:newly-solved-instances}

\begin{table}[htbp]
	\footnotesize
	\caption{Previously unsolved \MPCLP instances in \citet{Alvarez-Miranda2019} solved to optimality by {the proposed \BnC algorithm}.}
	\centering
	\label{table:unsolved}
	\begin{tabular}{cccrrr@{\hspace{20pt}}cccrrr}
	\toprule
	\multicolumn{1}{r}{\idfull}
	& \multicolumn{1}{r}{\V}
	& \multicolumn{1}{r}{\K}
	&  \tbltime &  \tblnode &  \tblobj & \multicolumn{1}{r}{\idfull}
	& \multicolumn{1}{r}{\V}
	& \multicolumn{1}{r}{\K}
	&  \tbltime &  \tblnode &  \tblobj \\
	\midrule
	15-5-20-0.2 &     300 &     100 &    58.9 &    7913 &  250.45    & 15-10-25-0.2 &     300 &     100 &   875.1 &   54901 &  286.01 \\
	18-5-20-0.2 &     400 &      40 &   260.6 &    7545 &  238.05    & 18-10-25-0.2 &     400 &      40 &   630.4 &    9988 &  308.98 \\
	19-5-20-0.2 &     400 &      80 &  1310.4 &   46250 &  323.16    & 19-10-25-0.2 &     400 &      80 &   575.6 &   15017 &  375.66 \\
	27-5-20-0.2 &     600 &      10 &   214.8 &    1977 &  305.54    & 20-10-25-0.2 &     400 &     133 &   875.2 &   41068 &  397.16 \\
	32-5-20-0.2 &     700 &      10 &    37.4 &     215 &  374.85    & 25-10-25-0.2 &     500 &     167 &    31.3 &    2496 &  499.70 \\
	35-5-20-0.2 &     800 &       5 &    20.2 &      65 &  460.70    & 26-10-25-0.2 &     600 &       5 &    30.7 &     192 &  370.60 \\
	38-5-20-0.2 &     900 &       5 &   216.4 &     393 &  563.03    & 27-10-25-0.2 &     600 &      10 &   834.0 &   20281 &  446.98 \\
	39-5-20-0.2 &     900 &      10 &  1186.3 &    8692 &  682.84    & 31-10-25-0.2 &     700 &       5 &   360.7 &     911 &  510.57 \\
	19-5-20-0.5 &     400 &      80 &    38.1 &    1948 &  317.62    & 32-10-25-0.2 &     700 &      10 &   741.7 &    2215 &  553.67 \\
	20-5-20-0.5 &     400 &     133 &    78.0 &    5131 &  362.51    & 34-10-25-0.2 &     700 &     140 &     7.5 &      93 &  699.83 \\
	23-5-20-0.5 &     500 &      50 &   212.1 &    2263 &  354.87    & 35-10-25-0.2 &     800 &       5 &    51.4 &     159 &  668.82 \\
	24-5-20-0.5 &     500 &     100 &   216.1 &    6629 &  430.74    & 38-10-25-0.2 &     900 &       5 &   844.5 &    2245 &  788.21 \\
	25-5-20-0.5 &     500 &     167 &   360.3 &   14824 &  476.08    & 23-10-25-0.5 &     500 &      50 &  2277.6 &   33626 &  436.67 \\
	27-5-20-0.5 &     600 &      10 &    86.4 &     793 &  290.94    & 24-10-25-0.5 &     500 &     100 &   413.9 &   18185 &  486.77 \\
	32-5-20-0.5 &     700 &      10 &    16.0 &      88 &  357.83    & 27-10-25-0.5 &     600 &      10 &   146.2 &    1133 &  431.38 \\
	35-5-20-0.5 &     800 &       5 &    17.1 &      45 &  432.06    & 29-10-25-0.5 &     600 &     120 &   499.8 &   27947 &  594.67 \\
	36-5-20-0.5 &     800 &      10 &   998.4 &    1959 &  453.18    & 31-10-25-0.5 &     700 &       5 &    91.4 &     274 &  490.91 \\
	38-5-20-0.5 &     900 &       5 &   113.7 &     247 &  526.27    & 32-10-25-0.5 &     700 &      10 &    67.3 &     303 &  533.12 \\
	39-5-20-0.5 &     900 &      10 &   105.3 &     282 &  637.70    & 34-10-25-0.5 &     700 &     140 &     5.8 &      29 &  699.59 \\
	28-5-20-0.8 &     600 &      60 &   207.9 &    1696 &  472.58    & 35-10-25-0.5 &     800 &       5 &    41.0 &      91 &  640.36 \\
	29-5-20-0.8 &     600 &     120 &    32.0 &     733 &  537.05    & 38-10-25-0.5 &     900 &       5 &   256.9 &     534 &  755.86 \\
	30-5-20-0.8 &     600 &     200 &    25.8 &    1370 &  577.40    & 28-10-25-0.8 &     600 &      60 &   539.6 &   32517 &  559.02 \\
	33-5-20-0.8 &     700 &      70 &   184.5 &    2655 &  581.92    & 33-10-25-0.8 &     700 &      70 &   962.0 &   48173 &  673.22 \\
	34-5-20-0.8 &     700 &     140 &     9.8 &      75 &  653.35    & 34-10-25-0.8 &     700 &     140 &     5.7 &      39 &  699.36 \\
	36-5-20-0.8 &     800 &      10 &   339.2 &     692 &  427.42    & 36-10-25-0.8 &     800 &      10 &  3462.4 &   18927 &  621.44 \\
	37-5-20-0.8 &     800 &      80 &  1564.3 &   71184 &  683.42    & 37-10-25-0.8 &     800 &      80 &   155.2 &    1495 &  777.42 \\
	38-5-20-0.8 &     900 &       5 &    82.8 &     106 &  490.74    & 38-10-25-0.8 &     900 &       5 &   105.7 &     189 &  723.50 \\
	39-5-20-0.8 &     900 &      10 &   119.5 &     214 &  592.56    & 40-10-25-0.8 &     900 &      90 &  1586.8 &   80429 &  889.92 \\
	40-5-20-0.8 &     900 &      90 &   125.8 &     859 &  801.05    &  &  &  &  &  &  \\
	\bottomrule
\end{tabular}
\end{table}

	

\end{document}